\documentclass[12pt]{article}
\usepackage{latexsym,amssymb,amsmath,amsfonts,amsthm}
\usepackage{graphicx}
\usepackage{tikz}
\usepackage{mathrsfs}
\setlength{\textwidth}{6.5in}
\setlength{\textheight}{9in}
\setlength{\topmargin}{-.5in}
\setlength{\oddsidemargin}{.0in}
\setlength{\evensidemargin}{.0in}

\numberwithin{equation}{section}
\newtheorem{theorem}{Theorem}[section]
\newtheorem{lemma}[theorem]{Lemma}
\newtheorem{prop}[theorem]{Proposition}
\newtheorem{corollary}[theorem]{Corollary}
\newtheorem{conj}[theorem]{Conjecture}

\def\beq{ \begin{equation} }
\def\eeq{ \end{equation} }
\def\bal{ \begin{align*}}
\def\eal{ \end{align*}}
\def\mn{\medskip\noindent}

\def\ep{\epsilon}

\def\square{\vcenter{\vbox{\hrule height .4pt
  \hbox{\vrule width .4pt height 5pt \kern 5pt
        \vrule width .4pt} \hrule height .4pt}}}

\def\RR{\mathbb{R}}
\def\TT{\mathcal{T}}
\def\ZZ{\mathbb{Z}}

\def\NN{\mathcal{N}}
\def\VV{\mathbb{V}}
\def\WW{\textbf{W}}
\def\BB{\textbf{B}}
\def\XX{\textbf{X}}

\def\KK{\mathcal{K}}

\def\clearp{}

\def\hh{\hspace{1ex}}

\def\varep{\varepsilon}
\def\norm#1{\langle #1\rangle}

\begin{document}

\title{Motion by mean curvature \\ in interacting particle systems}
\author{Xiangying Huang and Rick Durrett }

\date{\today}						

\maketitle

\centerline{\it To Harry Kesten, prodigious problem solver, mentor, colleague, and friend}

\begin{abstract} 
There are a number of situations in which rescaled interacting particle systems have been shown to converge to a reaction diffusion equation (RDE) with a bistable reaction term, see e.g., \cite{CDP13,Wald,evog,DN94}.  These RDEs have traveling wave solutions. When the speed of the wave is nonzero, block constructions have been used to prove the existence or nonexistence of nontrivial stationary distributions. Here, we follow the approach in a paper by Etheridge, Freeman, and Pennington \cite{Etheridge} to show that in a wide variety of examples when the RDE limit has a bistable reaction term and traveling waves have speed 0, one can run time faster and further rescale space to obtain convergence to motion by mean curvature. This opens up the possibility of proving that the sexual reproduction model with fast stirring has a discontinuous phase transition, and that in Region 2 of the phase diagram for the nonlinear voter model studied by Molofsky et al \cite{Molofsky} there were two nontrivial stationary distributions. 
\end{abstract}

\section{Introduction}

The literature on motion by mean curvature is extensive, so we will only cite the papers most relevant to our research. In 1992 Evans, Soner, and Souganidis  \cite{ESS} established that suitably rescaled versions of the Allen-Cahn equations converge to motion by mean curvature, in the sense that the solution converges to an indicator function of a region whose boundary evolves as the mean curvature flow. The big breakthrough made in this paper was that the limiting result was valid for all time despite the possible occurrence of geometric singularities. See the first four pages of \cite{ESS} for the physical motivation and references to previous work.

In 1995 Katsoulakis and Souganidis \cite{KS} used the results developed in \cite{ESS} to prove that stochastic Ising models with long range interactions, called Kac potentials, when rescaled converge to motion by mean curvature. The interaction kernel for their Ising model on $\ZZ^d$ is
\beq
\begin{cases}
K_\gamma(x,y) = \gamma^d J(\gamma |x-y|)\\
\text{$J : \RR^d \to [0,\infty)$ has compact support and is symmetric, i.e., $J (x) = J(|x|)$.}
\label{assJ}
\end{cases}
\eeq
The weighted sum of spins seen by $x$ is
$$
h_\gamma(x) = \sum_{y \neq x} K_\gamma(x,y) \sigma(y).
$$
This formula  is used to define the Gibbs measure with inverse temperature $\beta$
$$
\mu(\sigma) = \frac{1}{Z(\beta)} \exp\left( - \beta\sum_x h_\gamma(x) \sigma(x)\right),
$$
where $Z(\beta)$ is a normalization to make $\mu$ a probability measure. For this formula to be meaningful we have to restrict to a finite box $\Lambda = [-L,L]^d$ with boundary conditions imposed outside of $\Lambda$ and then let $L \to\infty$. See Chapter 6 of Liggett \cite{L85} for more details. $h_\gamma$ is also used 
to define the rates at which $\sigma(x)$ flips to $-\sigma(x)$,
$$
c_\gamma(x,\sigma) = \frac{ \exp(-\beta h_\gamma(x) \sigma(x)) }
{  \exp(-\beta h_\gamma(x)) +  \exp(\beta h_\gamma(x)).}
$$
This is one in the large collection of flip rates for which Gibbs states are reversible stationary distributions. Again,  see Chapter 6 of \cite{L85}. 

A very basic question is to understand the behavior of the process as $\gamma\to 0$. DeMasi, Orlandi, Presutti, and Trioli \cite{DOPT1,DOPT2,DOPT3} studied the limits as $\gamma\to0$ of the averaged magnetization of the system
$$
m_\gamma(x,t) = E^\gamma_{\mu^\gamma} \sigma_t(x), \quad (x,t)\in \ZZ^d\times\RR^+,
$$
where $ E^\gamma_{\mu^\gamma}$ is the expectation starting from the measure $\mu^\gamma$.
To state the result in \cite{DOPT1} we need the mean-field equation
\beq
\frac{\partial m}{\partial t} +m - \tanh(J \ast m)=0 \quad \text{in }\RR^d\times \RR^+,
\label{MFeq}
\eeq
where $J\ast m$ denotes the usual convolution in $\RR^d$. Let $\ZZ^d_n=\{ \bar{x}=(x_1,\dots,x_n)\in (\ZZ^d)^n |x_1\neq\cdots\neq x_n\}$.

\begin{theorem}[Theorem 2.1 in \cite{KS}] 
Assume that the initial measure is product measure $\mu^\gamma$ with
$$
E^\gamma_{\mu^\gamma} \sigma(x) = m_0(\gamma x), \quad x\in \ZZ^d,
$$
where $m_0$ is Lipschitz continuous and \eqref{assJ} holds. Then for any fixed $n$
and $\bar{x}\in \ZZ^d_n$,
$$
\lim_{\gamma\to 0} \bigg| E^\gamma_{\mu^\gamma} \left( \prod_{i=1}^n \sigma_t(x_i) \right)- \prod_{i=1}^n m(\gamma x_i,t)\bigg|=0
$$
where $m$ is the unique solution of \eqref{MFeq} with initial condition $m_0$.
\end{theorem}

\noindent
In words, the distribution of the particle system at time $t$ is almost a product measure in which the probabilities are given by $m(\gamma x,t)$.  To prove convergence to motion by mean curvature \cite{KS} use a lengthy argument to examine the asymptotics of the mean-field equation \eqref{MFeq} as $t\to\infty$ and space and time are rescaled.
Since the publication of \cite{KS} a number of similar results have been proved. \cite{Bona,FS95,FT19,Sowers,Yip} is a small sample of the papers that can be found in AMS subject classification 60.

\subsection{A more probabilistic approach}

Soon after the publication of \cite{ESS}, Chen \cite{Chen92} generalized much of this work and simplified the proofs. Etheridge et al \cite{Etheridge} use his paper as their primary source of information about motion by mean curvature, so we will as well. The object of study in \cite{Chen92} is the reaction diffusion equation (RDE)
\beq\label{AllenCahn}
\begin{cases}
\frac{\partial u}{\partial t} = \Delta u - \frac{1}{\ep^2}f(u),\quad &(x,t)\in \RR^d\times \RR^+,\\
u(x,0)=p(x), \quad & x\in \RR^d,
\end{cases}
\eeq
where $\ep$ is a small rescaling parameter, $p$ is a bounded continuous function in $\RR^d$ and $f$ is the derivative of a bistable potential. 
%$$
%\frac{\partial u^\ep}{\partial t} = \Delta u^\ep + \frac{1}{\ep^2}(1+u^\ep)u^\ep(1-u^\ep)
%$$ 
%obtained from applying the rescaling $x\to \ep x$ and $t\to \ep^2 t$ to the solution $u$ of the following RDE
%$$
%\frac{\partial u}{\partial t} = \Delta u + (1+u)u(1-u).
%$$ 
Chen gives general conditions on $f$ in (1.3) of his paper \cite{Chen92} that guarantee motion by mean curvature will appear in the limit as $\ep\to 0$, 
\begin{align*}
f \in C^2(\RR), \quad & \text{$f$ has exactly three zeros: $u_-< u_0< u_+$}\\
f(u)<0 , \quad & \text{for $u\in (-\infty, u_-)\cup (u_0,u_+)$}\\
f(u)>0, \quad & \text{for $ u\in (u_-,u_0)\cup (u_+,\infty)$}\\
f'(u_-)>0, \quad &f'(u_+)>0, \quad f'(u_0)<0.
\end{align*}
We will restrict our attention to the case in which $f$ is a third or fifth degree polynomial that is anti-symmetric around its central root $u_0$, i.e., $f(u_0-x) = - f(u_0+x)$. 

In the case of a cubic, the $1/\ep^2$ in front of the reaction term suggests that when $\ep$ is small the values of the solution will be close to one of the three fixed points ($u_-$, $u_0$ and $u_+$) across most of the space.  Chen's results prove this and give quantitative estimates when $\ep$ is small. 

To explain the phrase ``motion by mean curvature'', we note that under some assumptions that we state later, he proved that the set of points $\{ x\in \RR^d: u(x,t)=u_0\}$ can be written as a family of parameterized hyper-surfaces $\Gamma_t: S^{d-1} \to \RR^d$ where $S^{d-1}$ is the unit sphere in $\RR^d$, and $\Gamma_t$ evolves by 
\beq\label{mcflow}
\frac{\partial \Gamma_t(\theta)}{\partial t} = \kappa_t(\theta) n_t(\theta), \quad \theta\in S^{d-1},
\eeq
where $n_t(\theta)$ is the vector normal to the hypersurface and $\kappa_t(\theta)$ is the mean curvature, i.e., the sum of the principal curvatures. We refer to $\mathbf{\Gamma}=\{ \Gamma_t: t\geq 0\}$ as the mean curvature flow.

Etheridge et al \cite{Etheridge} used Chen's results to show that  the spatial $\Lambda$-Fleming-Viot process with selection against heterozygosity when suitably rescaled in space and time converges to motion by mean curvature. We refer the reader to \cite{Etheridge} for the description of the process. Their first step was to study the behavior of the PDE in $d\geq 2$,
$$
\frac{\partial v^\ep}{\partial t} 
= \Delta v^\ep + \frac{1}{\ep^2} v^{\ep}(1-v^\ep)(2v^{\ep}-1),
\qquad v^\ep(0,x) = p(x)
$$
where $p(x):\RR^d \to [0,1]$ is the initial condition. To analyze the PDE \cite{Etheridge} introduce a branching Brownian motion in which particles split into 3 at a fixed rate $\ep^{-2}$. As in the systems described in the next subsection, this is a dual process that can be used to compute solutions of the PDE. To find $u(x,t)$ one starts with a particle at $x$ at time $t$ and runs the branching Brownian motion  down to time 0. If a particle in the system ends up at $y$ at time 0, its state is set to be $1$ with probability $p(y)$ and  $0$ with probability $1-p(y)$. As we work upwards the branching tree, states of particles do not change until three lineages coalesce into one. At this point the one lineage that emerges after coalescence takes the value that is in the majority of the three coalescing particles.

\begin{figure}[ht]
\begin{center}
\begin{picture}(260,260)
\put(130,230){\line(0,-1){200}}
\put(130,230){\line(-1,-2){60}}
\put(130,230){\line(2,-3){52}}
\put(70,110){\line(0,-1){80}}
\put(70,110){\line(-1,-3){27}}
\put(70,110){\line(1,-3){27}}
\put(182,150){\line(0,-1){120}}
\put(182,150){\line(-1,-4){30}}
\put(182,150){\line(1,-4){30}}
\put(182,90){\line(0,-1){60}}
\put(182,90){\line(-1,-4){15}}
\put(182,90){\line(1,-4){15}}
\put(20,30){\line(1,0){220}}
\put(40,15){1}
\put(67,15){0}
\put(95,15){1}
\put(125,15){1}
\put(65,120){1}
\put(150,15){1}
\put(165,15){0}
\put(180,15){0}
\put(195,15){1}
\put(210,15){0}
\put(185,90){0}
\put(182,155){0}
\put(130,235){1}
\end{picture}
\caption{Picture of the branching Brownian motion. We run from $(x,t)$ down to time 0, and then work back up the structure to compute the state of $x$ at time $t$.}
\end{center}
\end{figure}
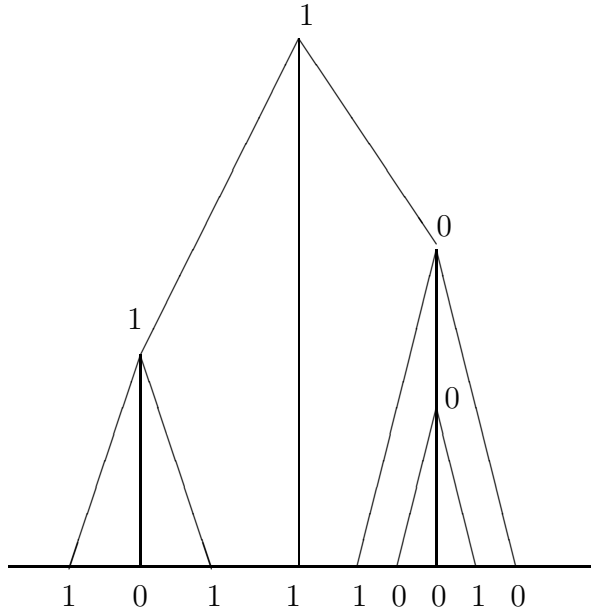

A variety of particle systems have dual processes that are close to branching Brownian motions after rescaling. A similar treatment as in \cite{Etheridge} can thus be taken to understand these systems. Instead of taking a majority vote at each branch point in the dual process, the specific interaction rule of the particle system considered would prescribe the value of the lineage that emerges after that. 

In what follows we will discuss three examples. The sexual reproduction model which is a system with fast stirring and the Lotka-Volterra system and nonlinear voter models that are examples of voter model perturbations. In each case we will first consider a system $\xi^\delta_t$ on $\delta \ZZ^d$ run at rate $\delta^{-2}$ that converges to a reaction diffusion equation. Then we will introduce a process $\xi^\ep_t$ on $\delta\ep\ZZ^d$ that is further sped up by a factor of $\ep^{-2}$ that converges to motion by mean curvature. For reasons that we will explain later we will choose 
\beq
\delta=\exp(-\ep^{-3})\quad\hbox{or}\quad
\ep=(\log(1/\delta))^{-1/3}.
\eeq
Note that $\ep$ is determined by $\delta$ and vice versa so we can regard either as the small parameter in $\xi^\ep_t$, but given the notation for the processes it is more natural to choose $\ep$.

\subsection{Systems with fast stirring}

Particle systems with fast stirring were first introduced by Durrett and Neuhauser \cite{DN94}. Let $\delta>0$ be a small rescaling parameter. They considered processes $\xi^{\delta}_t:\delta \ZZ^d\to \{0,1\}$ that evolve as follows:

\mn
(i) There are translation invariant finite range flip rates $c_\delta(x,\xi)$ that give the rate at which site $x$ changes to the opposite state when the configuration is $\xi$. 

\mn
(ii) For each unordered pair $x,y\in \delta\ZZ^d$ with $\|x-y\|_1=\delta$ we exchange the values at $x$ and $y$ at rate $\delta^{-2}/2$.

\medskip

We will focus on the special case in which the particle system is the ``sexual reproduction'' model where state 1 means a site is occupied and state  0 means vacant. The flip rates is given by
$$
c_\delta (x,\xi) =   1_{\{\xi(x)=1\}} + 1_{\{\xi(x)=0\}} \cdot \lambda n_1(x,\xi), 
$$
where $\lambda>0$ is the birth rate and $n_1(x,\xi)$ is the number of pairs in the set 
$$
x+\NN^\delta_b\equiv x+\delta \cdot \big\{\{e_1,e_2\}, \{-e_1, e_2\}, \{-e_1,-e_2\}, \{e_1,-e_2\} \big\}
$$ 
in which both sites are in state 1. To have a concrete example in mind we will restrict our attention to the case $d=2$. As there are four possible pairs in $\ZZ^2$, we let $\beta=4\lambda$.

Durrett and Neuhauser \cite{DN94} showed that as $\delta\to 0$ the density of 1's near $x$ at time $t$ converges to the solution of
\beq
\frac{\partial u}{\partial t }= \frac{1}{2} \Delta u + \phi(u) \quad\hbox{where}\quad \phi(u) = -u + \beta u^2(1-u).
\label{SR}
\eeq
The term $-u$ in $\phi(u)$ accounts for deaths of individuals (i.e., the flips from 1 to 0 at rate 1), while the term $\beta u^2(1-u)$ accounts for the sexual reproduction. Writing $\phi(u) = - u ( 1 - \beta u(1-u) )$ we see that when $\beta < 4$ there is no positive solution of $\phi(u)=0$. When $\beta=4$, 1/2 is a double root.  When $\beta>4$ there are two positive roots $\rho_1 < 1/2 < \rho_2 < 1$.  Based on this calculation one might guess that as $\delta\to 0$, the critical value for survival of the sexual reproduction model with fast stirring,  $\beta_c(\delta)$, should converge to 4. However, the correct result, which is proved in \cite{DN94}, is $\beta_c(\delta) \to 4.5$ as $\delta \to 0$.

To explain the intuition behind this, we look at the PDE \eqref{SR} in $d=1$ for intuition. We note that if $\beta > 4$  there are traveling wave solutions $u(x,t) = w(x-ct)$ with $w(-\infty) = \rho_2$ and $w(\infty)=0$. A little calculus shows that $w$ satisfies
$$
-c w' = (1/2) w'' + \phi(w).
$$
Multiplying by $w'$ and integrating from $-\infty$ to $\infty$, we find, see (1.6) in \cite{DN94}, that
$$
c \int w'(x)^2 \, dx = \int_0^{\rho_2} \phi(y) \, dy.
$$
We have no idea about the value of  $\int w'(x)^2 \, dx$, but it is positive so this tells us that the sign of the wave speed $c$ is the same as that of the integral on the right-hand side. When $\beta = 4.5$, the three roots are $0$, $1/3$, and $2/3$, so symmetry around the central root $1/3$ implies the integral is 0. Monotonicity (or calculus) tells us that $c<0$ when $\beta<4.5$, and $c>0$ for $\beta > 4.5$. Convergence results for the PDE, see e.g. \cite{FM77}, and
block constructions were used to show that

\begin{itemize}
\item
When $\beta> 4.5$ there is some $\delta_0(\beta)>0$ such that for $\delta< \delta_0(\beta)$ 
 there is a nontrivial stationary distribution with a density close to $\rho_2$. The second part of the conclusion about density is an improvement due to Cox, Durrett, and Perkins \cite{CDP13}.

\item
When $\beta < 4.5$  there is some $\delta_0(\beta)>0$ such that for $\delta< \delta_0(\beta)$  the process $\xi^\delta_t$ dies out.
\end{itemize}

\mn
Since $\rho_2(\beta)$ approaches $2/3$ as $\beta\downarrow4.5$, it is conjectured in  \cite{DN94} the density of the upper invariant measure (which is obtained by starting with all 1's and letting $t\to\infty$) has a positive density at $\beta_c(\delta)$ when $\delta$ is small. 

Here we speed up the process $\xi^\delta_t$ by an extra factor $\ep^{-2}$ and rescale the space to $\delta\ep\ZZ^d$ to obtain a new process 
$$
\xi^{\ep}_t: \delta\ep \ZZ^2 \to\{0,1\}.
$$  
If $\ep$ is kept fixed then the limiting differential equation as $\delta\to 0$ is
\beq\label{resSR}
\frac{\partial u^\ep}{\partial t }= \frac{1}{2} \Delta u^\ep + \frac{1}{\ep^2} \phi(u^\ep), \quad u^\ep(0,x)=p(x),
\eeq
where $p: \RR^d \to [0,1]$ is the initial condition and the reaction term remains the same
$$ 
\phi(u) = -u + \beta u^2(1-u).
$$ 
\eqref{resSR} matches the form of an Allen-Cahn equation given in \eqref{AllenCahn} except for a factor $\frac{1}{2}$ in front of the Laplacian. This is because their underlying Brownian motions have different rates. The Brownian motion with generator $\Delta u$ has rate 2, that is, at time 1 the Brownian motion has variance is 2, while the Brownian motion with generator $\frac{1}{2}\Delta u$ runs at rate 1. We will adopt the convention in probability and assume that
\beq
\text{ all Brownian motions have rate 1,}
\eeq
which gives rise to PDEs with a factor $\frac{1}{2}$ before the Laplacian like \eqref{resSR}.

Fixing $\ep$ and letting $\delta\to 0$ shows us how the rescaled particle system is related to the Allen-Cahn equation. However, to prove our result we need to take both $\ep$ and $\delta$ to 0. In order to avoid collisions in the dual process (see Section \ref{dual} for a full discussion), we need to require that the branching rate $\ep^{-2}$ is much slower than the stirring rate $(\delta\ep)^{-2}/2$ so that newly born particles move away from each other before the next branching time. Choosing $\delta = \exp(-\ep^{-3})$ guarantees this. Weaker conditions may suffice.

Let $p(x):\RR^d\to [0,1]$ be the initial density of the system that we consider. In the case of sexual reproduction $d=2$. We will state our assumptions on $p$ later in Section \ref{proofintro}, see (C1)-(C3). We say the process $\xi^\ep_t$ starts with initial condition $p$ if the initial distribution is a product measure where $P(\xi^\ep_0(x)=1)=p(x)$ for $x\in \delta\ep\ZZ^d$.

\begin{theorem}\label{sexlim}
Let $\xi^\ep_t: \delta\ep\ZZ^2\to \{0,1\}$ denote the rescaled sexual reproduction model with fast stirring starting with an initial condition $p(x)$ that satisfies (C1)-(C3). Choose $\delta=\exp(-\ep^{-3})$. If $\beta=4.5$ then as $\ep\to 0$, $P(\xi^\ep_t(x)=1)$ converges to motion by mean curvature.
\end{theorem}

\noindent
Theorem \ref{main0} will explain explicitly what it means to converge to motion by mean curvature. Theorem \ref{sexlim} shows that the probabilities $P(\xi^\ep_t(x)=1)$ converge to a density $u(x,t)$ that satisfies motion by mean curvature.  As in Theorem 1.2 in \cite{CDP13} one can also prove that the rescaled particle system which takes values in $\{0,1\}$ on a fine grid also converges to $u(x,t)$. See the discussion before Theorem 1.2 in \cite{CDP13} for the necessary definition. This remark also applies to the next two examples.

In motion by mean curvature the interfaces become straight as time $t\to\infty$, so the regions in which the solution is close to one of the two stable fixed points get larger. This suggests that

\begin{conj} If $\beta=4.5$ there exists some $\ep_0(\beta)>0$ so that when $\ep< \ep_0(\beta)$ there is a translation invariant stationary distribution for the process $\xi^\ep_t$ with density close to 2/3.
\end{conj}

\noindent
Theorem \ref{sexlim} suggests that there is a discontinuous phase transition but does not rule out the possibility that the phase transition could be continuous for any $\ep>0$.

\subsection{Voter model perturbations}\label{introvmp}

Cox, Durrett and Perkins  \cite{CDP13} introduced  a class of interacting particle systems called voter model perturbations. For simplicity we will restrict our attention to processes with two states.  In this case the process is denoted by $\xi_t:\ZZ^d\to \{0,1\}$ and  the rate at which $\xi_t(x)$ flips to the opposite state given configuration $\xi$ is 
$$
c^\delta(x,\xi)=c_v(x,\xi)+\delta^2c_p(x,\xi)
$$
where $c_v(x,\xi)$ is the voter flip rate and $c_p(x,\xi)$ is the perturbation flip rate. We rescale the system $\xi_t$ by $x\to \delta x, t\to \delta^{-2} t$ and obtain the rescaled process $\xi^\delta_t: \delta\ZZ^d\to \{0,1\}$.  The perturbation $c_p(x,\xi)$ is scaled down by $\delta^2$ so that on the sped up time scale it is $O(1)$ while the voter model runs at rate $\delta^{-2}$.

The voter model part of the process will depend on a symmetric (i.e, $K(x) = K(-x)$), irreducible probability kernel $K : \ZZ^d \to [0, 1]$ with $K(0) = 0$ and covariance matrix $\sigma^2 I$. Letting $\NN_v$ denote the neighborhood for voting (determined by $K$), whenever there is a voter flip at $x\in \ZZ^d$, the voter at $x$ chooses a site in $x+\NN_v$ randomly according to the probability kernel $K$ and adopts its state. The voter flip rate can be formulated as 
$$
c_v(x,\xi)= (1-\xi(x))f_1(x,\xi)+\xi(x)f_0(x,\xi),
$$
where $f_j(x,\xi)=\sum_{y\in \ZZ^d}K(y-x)1_{\{\xi(y)=i\}}$ is the local density.

Cox, Durrett, and Perkins \cite{CDP13} have shown (see their Theorem 1.2) that, under some mild assumptions on the perturbation $c_p$, if we run the system on $\delta\ZZ^d$ with $d \ge 3$ then the process converges to the solution of a reaction diffusion equation
$$
\frac{\partial u}{\partial t} = \frac{\sigma^2}{2} \Delta u + \phi(u),
$$
where $\phi$ is the reaction term that depends on the particular perturbation. A general formula is given in Secction 1.1 of \cite{CDP13}. See (1.30)
Here, $d \ge 3$ is needed so that the voter model has a one parameter family of stationary distribution. Four examples were studied in \cite{CDP13}. Two fall within the scope of this investigation. 

\mn
{\bf Lotka-Volterra systems.} This model of the competition of two species were initially studied by Neuhauser and Pacala \cite{NeuPac}. For more recent references see \cite{CDP13}. In this case the perturbation rate is given by
$$
c_p(x,\xi) = \theta_0 f_1^2(1-\xi(x)) + \theta_1 f_0^2 \xi(x)
$$
where $\theta_0$ and $\theta_1$ are parameters in $\RR$. In words we pick two nearest neighbors of $x$ (with replacement, according to $K$) and flip if both of the neighbors are of the opposite type to $x$. Let $\{e_1,e_2\}$ be i.i.d. with law $K(\cdot)$  and let $\norm{\cdot}_u$ denote the expectation on the product space where $e_1,e_2$ and $\xi$ are independent and $\xi$ distributes as the voter equilibrium with density $u$. Then the limiting PDE has reaction term
$$\phi(u)=\theta_0 \norm{ (1-\xi(0))\xi(e_1)\xi(e_2)}_u-\theta_1 \norm{ \xi(0)(1-\xi(e_1))(1-\xi(e_2))}_u.$$
This term can be rewritten in the form
\beq\label{lotka}
\phi(u) = u(1-u)[\theta_0p_2 - \theta_1(p_2+p_3) +up_3(\theta_0+\theta_1)],
\eeq
where $p_2=p(0|e_1,e_2)$ is the probability that the rate 1 random walks with kernel $K$ starting from $e_1$ and $e_2$ coalesce but they avoid the one starting at 0, and $p_3=p(0|e_1|e_2)$ is the probability that the random walks starting from $0,e_1,e_2$ never coalesce.

In \cite{CDP13} the phase diagram is described. There are five regions $\{R_i, 1\leq i\leq 5\}$, see Figure 1.1. At the boundary between $R_4$ and $R_5$, $\theta_0=\theta_1=\theta>0$ so \eqref{lotka} simplifies to
$$
\phi(u) =\theta p_3  u(1-u)(2u-1).
$$
In this case the reaction diffusion equation is bistable and the speed of traveling waves is 0. Next we further rescale the system $\xi^\delta_t$ by $x\to \ep x$, $t\to \ep^{-2}t$ to get the second rescaled process $\xi^\ep_t$.
Following the same approach as our proof of Theorem \ref{sexlim}, we have

\begin{theorem}\label{LVlim}
Let $\xi^\ep_t: \delta\ep\ZZ^d\to \{0,1\}$ denote the rescaled voter model perturbations where the perturbation is a Lotka-Volterra system, starting with an initial condition $p(x)$ that satisfies (C1)-(C3). Choose $\delta=\exp(-\ep^{-3})$. In $d \ge 3$ as $\ep\to 0$, $P(\xi^\ep_t(x)=1)$  converges to motion by mean curvature.
\end{theorem}

In the Lotka-Volterra system the stable fixed points are at 0 and 1, so reasoning as we did for the sexual reproduction model with fast stirring:

\begin{conj} When $\ep$ is sufficiently small there is clustering in the process $\xi^\ep_t$, i.e., for any finite box $B$ the probability of seeing both types in the box tends to 0 as $t\to\infty$. 
\end{conj}

\mn
{\bf Nonlinear voter models.} Molofsky et al  \cite{Molofsky} used simulations and heuristic arguments to study a discrete time system with nearest neighbor interactions. We consider a continuous time version of the system with long range interactions. At times of the arrivals of a rate 1 Poisson process, a site $x$ chooses four points $x_1, \ldots x_4$ at random from $x + [-L,L]^d$. If there are exactly $k$ one‘s at the sites $x, x_1, \ldots x_4$ then $x$ becomes 1 with probability $a_k$ and 0 with probability $1-a_k$ where 
$$
a_0=0, \quad a_5=1, \quad a_1=1-a_4 \quad a_2=1-a_3.
$$  
This gives us a two-parameter family of models that are symmetric under interchange of 0 and 1. 

It is complicated to compute the reaction term $\phi_L(u)$ explicitly as the states of the chosen sites $x, x_1, \ldots x_4$ might not be independent. However, when the neighborhood $\NN_b=[-L,L]^d$ is chosen to be large then coalescence in the dual process is rare and the states of these sites become nearly independent. A little calculation, see (1.67) in \cite{CDP13}, shows that if they are independent then the reaction term is
\beq
\phi(u) = b_1 u(1-u)^4 + b_2 u^2(1-u)^3 -b_2 u^3(1-u)^2 - b_1(1-u)^4 u,
\label{NLVphi}
\eeq 
where $b_1 = 4a_1-a_4$ and $b_2=6a_2 - 4a_3$. For any $L>0$, $\phi_L(u)$ has the same form as that in \eqref{NLVphi} with coefficients $b_{1,L},b_{2,L}$ instead of $b_1,b_2$. If $L$ is large then the coefficients $b_{1,L},b_{2,L}$ are close to the coefficients $b_1,b_2$ in the independent case.

The reaction term $\phi(u)$ is a cubic in Region $1$ and $3$, but in Region $2$ and $4$ it is quintic. This leads to the following predictions about the behavior of the system.
\begin{itemize}
  \item In Region 1, the fixed point at 1/2 is attracting, so the system should exhibit coexistence.
  \item In Region 3, the fixed point at 1/2 is unstable, so when the process is sped up it should exhibit motion by mean curvature, and we expect clustering, i.e., for any finite box $[-N,N]^d$ the probability that all sites in this box have the same state tends to 1.
  \item In Region 2, 0 and 1 are unstable fixed points, so if the fixed points are $u^* < 1/2 < 1-u^*$, the values in $[0,u^*-\varep]$ and $[1-u^*+\varep,1]$ for any $\varep>0$ should rapidly disappear from the solution. When the process is sped up then the system exhibits motion by mean curvature, resulting in large regions with 1's at density $u^*$ separated by a thin boundary from large regions with density $1-u^*$.
  \item In Region 4, there is a traveling wave solution $w_1$ with $w_1(-\infty) =1$  and $w_1(\infty) = 1/2$ with speed $c_1$ and  a traveling wave solution $w_2$ with $w_2(-\infty) =1/2$ and $w_2(\infty) = 0$ with speed $c_2$. By symmetry $c_2=-c_1$. If $c_1<0$ (Case 4A), the PDE converges to 1/2 and there is coexistence. If $c_1>0$ (Case 4B) and $L$ is sufficiently large, then there is a traveling wave solution $w_0$ of the PDE in $d=1$ with $w_0(-\infty) =0$ and $w_0(\infty) = 1$ with speed 0 (see page 284 in \cite{FM81}). When the process is sped up then it should exhibit motion by mean curvature, and we expect clustering.
\end{itemize}

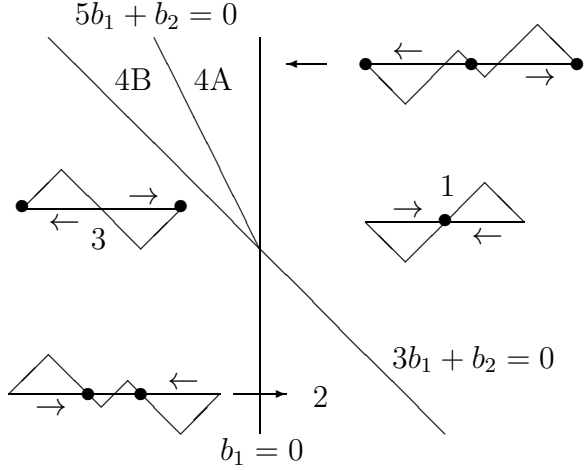
\begin{figure}[h]
\begin{center}
\begin{picture}(200,210)
\put(100,30){\line(0,1){150}}
\put(170,30){\line(-1,1){150}}
\put(100,100){\line(-1,2){40}}
\put(168,120){1}
\put(167,108){$\bullet$}
\put(140,110){\line(1,0){60}}
\put(140,110){\line(1,-1){15}}
\put(155,95){\line(1,1){30}}
\put(185,125){\line(1,-1){15}}
\put(180,102){$\leftarrow$}
\put(150,112){$\rightarrow$}
\put(36,100){3}
\put(67,113){$\bullet$}
\put(7,113){$\bullet$}
\put(10,115){\line(1,0){60}}
\put(10,115){\line(1,1){15}}
\put(25,130){\line(1,-1){30}}

\put(55,100){\line(1,1){15}}
\put(50,117){$\rightarrow$}
\put(20,107){$\leftarrow$}
\put(120,40){2}
\put(90,45){\vector(1,0){20}}
\put(5,45){\line(1,0){80}}
\put(5,45){\line(1,1){15}}
\put(20,60){\line(1,-1){20}}
\put(32,42){$\bullet$}
\put(40,40){\line(1,1){10}}
\put(50,50){\line(1,-1){20}}
\put(52,42){$\bullet$}
\put(70,30){\line(1,1){15}}
\put(15,37){$\rightarrow$}
\put(65,47){$\leftarrow$}
\put(45,160){4B}
\put(75,160){4A}
\put(125,170){\vector(-1,0){15}}
\put(137,167){$\bullet$}
\put(140,170){\line(1,0){80}}
\put(140,170){\line(1,-1){15}}
\put(155,155){\line(1,1){20}}
\put(175,175){\line(1,-1){10}}
\put(177,167){$\bullet$}
\put(185,165){\line(1,1){20}}
\put(205,185){\line(1,-1){15}}
\put(217,167){$\bullet$}
\put(150,172){$\leftarrow$}
\put(200,162){$\rightarrow$}
\put(85,20){$b_1=0$}
\put(30,185){$5b_1+b_2=0$}
\put(150,55){$3b_1+b_2=0$}
\end{picture}
\caption{Phase diagram for the continuous time nonlinear voter model with large range in $d\ge 3$. Piecewise linear curves show the shape of $\phi$. Black dots indicate the locations of stable fixed points }
\label{fig:nlvpt2}
\end{center}
\end{figure}

In \cite{CDP13} the following result is proved, see their Theorem 1.13.

\begin{theorem}
Suppose $(b_1,b_2)$ is in Region $1$, $2$ or $4A$. If $L$ is sufficiently large then
(i) There is coexistence for sufficiently small $\delta$ (depending on $L$). 
(ii) Let $\eta>0$. In Region 1 and 4A, there is a $\delta_0(\eta)$ so that for $\delta<\delta_0(\eta)$ and any stationary distributions $\nu$ with $\nu( \xi\equiv 0) = \nu(\xi\equiv 1)=0$ have 
$$
\sup_x \big|\nu(\xi(x)=1) - 1/2\big| < \eta.
$$
\end{theorem}

Again we need to further speed up the process to get convergence to motion by mean curvature. We rescale the process $\xi^\delta_t$ a second time by speeding up time by an extra $\ep^{-2}$ and rescaling space to $\delta\ep \ZZ^d$ to define a process $\xi^\ep_t$.

We define the voting and branching neighborhoods by
$$
{\cal N}_v = \{ \pm e_1, \ldots \pm e_d \} \quad \text{ and }\quad{\cal N}_b =[-L,L]^d \cap \ZZ^d.
$$
To prove our result we need several assumptions:

\medskip
\noindent
(A1) $b_1 >0$ and $3b_1+b_2 <0$: the process is in Region 2.\\
(A2) $0\le a_1 \le a_2 \le 1/2$: the process is attractive.\\
(A3) $6b_1+b_2>0$: the $g$ function defined in \eqref{g3} is concave on $(1/2,1-u^*)$.

\begin{theorem} \label{NLVlim}
Let $\xi^\ep_t: \delta\ep\ZZ^d\to \{0,1\}$ denote the rescaled voter model perturbations where the perturbation is a nonlinear voter model, starting with an initial condition $p(x)$ that satisfies (C1)-(C3). Choose $\delta=\exp(-\ep^{-3})$. Suppose the initial condition $p(x)$ satisfies $\varep \le p(x) \le 1-\varep$ for some $\varep>0$. In $d \ge 3$ if (A1), (A2), and (A3) hold then as $\ep\to 0$, $P(\xi^\ep_t(x)=1)$ converges to motion by mean curvature.
\end{theorem}

\noindent
Using the reasoning from the two previous examples:

\begin{conj} \label{cNLV}
If (A1), (A2), and (A3) hold then there exists some $\ep_0(a_1,a_2)>0$ so that when $\ep<\ep_0(a_1,a_2)$ there is a translation invariant stationary distribution in which the density is close to $u^*$. By symmetry there is also one with density to close to $1-u^*$.
\end{conj}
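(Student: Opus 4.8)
The plan is to realize the two stationary distributions as the lower and upper invariant measures of the process and to locate their densities by a block construction resting on the PDE analysis behind Theorem~\ref{NLVlim}. Since (A2) makes the process attractive, the upper invariant measure $\nu^+$ (start from all $1$'s and let $t\to\infty$) and the lower invariant measure $\nu^-$ (start from all $0$'s and let $t\to\infty$) exist as weak limits of stochastically monotone families; both are translation invariant and stationary, and every stationary distribution of $\xi^\ep$ is stochastically sandwiched between $\nu^-$ and $\nu^+$. Because the nonlinear voter flip rates are symmetric under interchange of $0$ and $1$, $\nu^-$ is the image of $\nu^+$ under the global flip $\xi\mapsto 1-\xi$, so $\nu^+(\xi(x)=1)=1-\nu^-(\xi(x)=1)$ and in particular $\nu^+(\xi(x)=1)\ge 1/2\ge\nu^-(\xi(x)=1)$. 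It therefore suffices to show that, for $\ep$ small, $\nu^+(\xi(x)=1)$ is close to $1-u^*$, hence strictly above $1/2$; then $\nu^-(\xi(x)=1)$ is close to $u^*$, the two measures are distinct and nontrivial, and the conjecture follows.

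To control $\nu^+$ I would run the process from all $1$'s and follow the coarse-grained density of $1$'s, using two facts about the $\ep$-rescaled reaction diffusion equation $\partial_t u^\ep=\tfrac{\sigma^2}{2}\Delta u^\ep+\tfrac1{\ep^2}\phi(u^\ep)$, with $\phi$ the quintic~\eqref{NLVphi}, that come out of Chen's estimates exactly as in the proof of Theorem~\ref{NLVlim}. First, in Region~2 the fixed point $1$ is unstable while $1-u^*$ is the stable fixed point above $1/2$, so the PDE started from data in $[\tfrac12+c,\,1]$ (any fixed $c>0$) relaxes, on a time of order $\ep^2$, to within $\theta$ of $1-u^*$ on any fixed compact set and then remains there. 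Second, a region of pure $1-u^*$ phase is stable: where the interface $\{u=\tfrac12\}$ is empty nothing moves, and under motion by mean curvature a flat interface does not move while a mildly curved one is displaced by an amount controlled by its curvature times the elapsed time, so over a macroscopic box of side $\ell$ and duration $T\ll\ell^2$ a good region can be invaded neither by the complementary phase nor by the unstable phases near $0$ and $1$. These facts are transferred to the particle system through the hydrodynamic comparison machinery behind Theorem~\ref{NLVlim} (graphical/duality estimates plus comparison of the empirical density with the PDE solution) together with finite speed of propagation on the sped-up time scale, which makes the evolution outside a large box irrelevant over the block time.

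With these inputs one runs a block construction of the kind in \cite{DN94} and \cite{CDP13}. Call a space-time box $[0,T]\times Q$, with $Q$ a macroscopic cube, \emph{good} if the empirical density of $1$'s in every mesoscopic sub-cube of $Q$ lies in $[1-u^*-\theta,\,1-u^*+\theta]$ for all $t\in[0,T]$; the two facts above plus the hydrodynamic estimates show that a box whose relevant predecessors are good is itself good with probability at least $1-\gamma$, where $\gamma=\gamma(\ep,\delta,L)$ tends to $0$ in the appropriate order of limits. Finite speed of propagation makes each good-block event depend on only finitely many predecessors, so good blocks dominate a supercritical oriented percolation; attractiveness lets the comparison be phrased through monotone couplings and removes the usual bookkeeping. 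Starting from all $1$'s, the origin lies in a good block of every generation with probability bounded below, and a covering argument upgrades this to $P(\xi^\ep_t(x)=1)\in[1-u^*-2\theta,\,1-u^*+2\theta]$ for every $x$, uniformly in $t$. Letting $t\to\infty$ gives $\nu^+(\xi(x)=1)\in[1-u^*-2\theta,\,1-u^*+2\theta]$, which exceeds $1/2$ once $\theta$ is small, so $\nu^+\ne\nu^-$ and we obtain exactly the two claimed translation invariant stationary distributions.

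The main obstacle is the balanced, zero-speed character of the reaction term: unlike in supercritical contact-process-type block constructions, where the good region is allowed to grow, here it can at best hold its size, so the construction has no slack and one needs the sharp motion-by-mean-curvature control---that a region of pure $1-u^*$ phase persists even when surrounded by the complementary phase or by the unstable phases near $0$ and $1$, and that curvature-driven boundary motion is negligible on the block scale. Quantifying this forces one to arrange a hierarchy $\delta\ll\ep^2\ll T\ll\ell^2$ (with $\ell$ the block side) so that simultaneously the hydrodynamic error in passing from the particle system to the rescaled RDE, the PDE relaxation error, and the oriented-percolation error all vanish; assumptions (A1)--(A3) are precisely what keep the PDE picture---Region~2, attractiveness, and concavity of the auxiliary function $g$---usable throughout, just as in the proof of Theorem~\ref{NLVlim}.
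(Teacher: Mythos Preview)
The paper does not prove this statement---it is explicitly labeled a \emph{Conjecture}, and the only remark following it is that its conclusion would yield two translation-invariant stationary distributions. So there is no ``paper's own proof'' to compare against; your proposal is an attempt at an open problem.

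That said, your proposal has a genuine gap at the very first step. You take $\nu^{+}$ and $\nu^{-}$ to be the limits starting from all $1$'s and all $0$'s and aim to show $\nu^{+}(\xi(x)=1)$ is close to $1-u^{*}$. But in the nonlinear voter model as defined here, $a_{5}=1$ and $a_{0}=0$, so the all-$1$'s and all-$0$'s configurations are \emph{absorbing}: $\nu^{+}=\delta_{\mathbf{1}}$ and $\nu^{-}=\delta_{\mathbf{0}}$. These are exactly the trivial stationary distributions the conjecture is trying to go beyond. Relatedly, your claim that ``the PDE started from data in $[\tfrac12+c,1]$ relaxes to within $\theta$ of $1-u^{*}$'' fails at the endpoint: $1$ is a fixed point of $\phi$, so constant $1$ stays constant $1$. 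This is precisely why Theorem~\ref{NLVlim} and condition (G0) impose $\eta\le p_{0}(x)\le 1-\eta$ on the initial data. Any approach has to produce the nontrivial stationary measures by some mechanism other than the extremal invariant measures of the attractive process.

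There is also a structural difficulty you correctly flag but do not resolve. Standard block constructions in \cite{DN94,CDP13} rely on a positive wave speed so that a good block can invade its neighbors and the induced oriented percolation is supercritical. At speed zero the good region can at best hold its size, and under mean curvature flow a bounded region of the $1-u^{*}$ phase surrounded by the $u^{*}$ phase \emph{shrinks}; your assertion that such a region ``persists even when surrounded by the complementary phase'' is false at the PDE level. Arranging $T\ll\ell^{2}$ keeps the shrinkage small over one block step, but it does not stop the cumulative loss over many steps, so the percolation comparison as you describe it is not supercritical. This lack of slack is exactly why the paper states the result as a conjecture rather than a theorem.
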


\noindent
The statement in Conjecture \ref{cNLV} implies the existence of two translation invariant stationary distributions.

\subsection{Overview of proofs}\label{overview}

The key to the proof in \cite{Etheridge} and in our three examples is understanding the dual process and using a special function $g$ to compute the state after each branch point in the dual process.

\mn
\textbf{Duality and the g function.} 

\mn
\textbf{The $g$ function in \cite{Etheridge}.}
\cite{Etheridge} begins by constructing a dual process that produces the solution to 
$$
\frac{\partial u}{\partial t} = \Delta u +c u(1-u)(2u-1), \qquad u(0,x) = p(x), x\in \RR^d.
$$ 
The initial condition $p$ is assumed to take values in $[0,1]$ and satisfy some regularity conditions that we will state later.

The dual process in \cite{Etheridge} is a branching Brownian motion in $\RR^d$ in which the Brownian motions are run at rate 2 and split into 3 particles at rate $c$. To compute the solution at $x$ at time $t$ they run the dual process backward in time down to time 0. A dual particle that lands at $y$ at time 0 is set to be $1$ with probability $p(y)$ and to be $0$ with probability $1-p(y)$. The states for different particles at time 0 are independent. Then they work their way back up the tree performing majority vote whenever three lineages merge into one. In \cite{Etheridge} an important role in the proof is played by the function
\beq
g_0(p) = p^3+ 3p^2(1-p) = 3p^2-2p^3,
\label{g0}
\eeq
which is the probability that the output of the majority vote operation is 1 when the inputs are independent Bernoulli($p$) random variables. $g_0$ has fixed points at $0$, $1/2$ and $1$.

\mn
\textbf{The $g$ function in sexual reproduction model with fast stirring.}
The sexual reproduction model with fast stirring has a dual process that was introduced by Durrett and Neuhauser \cite{DN94}. The dual has particles that are moved by stirring, and have births when events in the sexual reproduction dynamics occur. In Section 2 we define this dual process rigorously and show that in the fast stirring limit it is almost a branching Brownian motion in $\RR^2$. Since a birth event depends on the states of three particles (two particles in the chosen pair and the particle at the center), the dual branches into three particles at each branch point.
However, we mark one lineage to indicate it came from the original particle while the other two are offspring. When $\beta=4.5$, we have a birth event with probability $9/11$ (i.e., $\beta/(1+\beta)$) and a death event with probability 2/11. The analogous function to \eqref{g0} for the sexual reproduction model with fast stirring is
\beq
g_1(p) = \frac{9}{11} [p^2(1-p)  + p] = \frac{9}{11} [p + p^2- p^3],
\label{gsex}
\eeq
which has fixed points 0, $1/3$ and $2/3$.

\mn
\textbf{The $g$ function in voter model perturbations where the perturbation is a Lotka-Volterra system.}
 Voter model perturbations also have duals that were defined by Cox, Durrett, and Perkins \cite{CDP13}. In the class of nonlinear voter models that we will study, the dual is a branching coalescing random walk. In the Lotka-Volterra system the system branches into three, and we mark one lineage to indicate it came from the original particle (call it $x$) while the others are offspring (call them $y$ and $z$). 

To simplify computation, in the dual process we only consider the ``effective" perturbations and let them be branch points. When a perturbation occurs at $x$ there are three possibilities: (i) If $x$ coalesces with $y$ or $z$ (or with both) there is no change in the state of $x$. Hence we ignore this case. (ii) If $y$ and $z$ coalesce then $y$ and $z$ share the same state. This case is treated as a voter event since $x$ would adopt the state of a randomly chosen neighbor ($y$ or $z$). Hence this case is not part of the perturbation. (iii) If there is no coalescence among the three particles, $x$ changes state if $y$ and $z$ are both in the opposite state to itself. Case (iii) is the only effective perturbation and in this case 
\beq
g_2(p) = (1-p)p^2 + p[1-(1-p)^2] = 3p^2 - 2p^3
\label{gLV}
\eeq
which is the same as \eqref{g0}.

\mn
\textbf{The $g$ function in voter model perturbations where the perturbation is a nonlinear voter model.}
In the nonlinear voter model the system branches into five, and we again mark one lineage to indicate it came from the original particle. Since the branching rate is 1 $\phi(p)$ has the form 
$$\phi(p)=-p h_1(p)+(1-p)h_2(p),$$
where $h_1(p)$ represents the probability of getting a 0 when the center is in state 1 and the states of the rest four sites are i.i.d. Bernoulli($p$), while $h_2(p)$ represents the probability of the center flipping from 0 to 1. It follows that 
\begin{align}
\nonumber g_3(p) &= p(1-h_1(p))+(1-p)h_2(p)=\phi(p)+p \\
&= b_1p(1-p)^4 + b_2p^2(1-p)^3 - b_2p^3(1-p)^2 - b_1 p^4 (1-p) + p
\label{g3}
\end{align}
where $b_1=4a_1 - a_4$ and $b_2=6a_2-4a_3$. In the collection of nonlinear voter models that we study $g_3$ has fixed points at 0, $1-u^*$, 1/2, $u^*$, and 1. If the middle fixed point $u_0$ is unstable then 0,1 are stable if there are three zeros, and $1-u^*,u^*$ are stable if there are five zeros. We collect these observations into an assumption

\mn
(G0) There are fixed points $0 \le u_- < u_0 < u_+ \le 1$ where $u_0$ is unstable, $u_+, u_-$ are stable, and $u_+-u_0 = u_0 - u_-$. There can be fixed points at 0 and 1 which must be unstable. To avoid absorption, the initial condition $p(x)$ is uniformly bounded away from the fixed points at 0 and 1, i.e., \\
(i) if there is a fixed point at 0 we suppose the initial condition $p(x) \ge \varep$ for some $\varep>0$,\\
(ii) if there is a fixed point at 1 we suppose the initial condition $p(x) \le 1-\varep$ for some $\varep>0$.

\medskip
We can observe the reaction term $\phi(p)$ in the above three examples satisfies
\beq\label{gphi}
 \phi(p)=r(g(p)-p)
 \eeq
 where $r$ is the reaction rate.

\subsection{Proof of convergence}\label{proofintro}

The main result in this paper is the following result that contains Theorems \ref{sexlim}, \ref{LVlim} and \ref{NLVlim} as special cases. This result applies to any model with fast stirring or voter model perturbation where the $g$ function satisfies (G0) and the following assumptions:

\begin{align}
(G1) & \quad g(u_+-\delta)+g(u_-+\delta)=u_-+u_+=2u_0
\label{G1} \\
(G2) & \quad g'(u_0)>1\quad \text{ and }\quad g'(u_-)=g'(u_+)<1.
\label{G2} \\
(G3 )& \quad g''(p)>0 \text{ if }p\in (u_-,u_0),  g''(p)<0\text{ if }p\in (u_0,u_+).
\label{G3}
 \end{align}

\mn
(G4) There exists $c_0\in (0,1-g'(u_-))$ and $\delta_* \equiv \inf\{ x\geq 0 : g'(u_-+x) \ge 1-c_0\}>0$ so that for $\delta\leq \delta_*$ 
\beq\label{G4}
u_+-g(u_+-\delta)=g(u_-+\delta)-u_-\leq (1-c_0)\delta.
\eeq
(G5) $g$ is strictly increasing on $[0,1]$.

\mn
In Section \ref{sec:check} we will show that the conditions hold in all our examples. 

The initial condition $p: \RR^d\to [0,1]$ is assumed to satisfy some regularity conditions given later. Given $p$, the initial interface is defined to be 
$$
\Gamma=\bigg\{ x\in\RR^d: p(x)=u_0\bigg\}.
$$
Following \cite{Etheridge} we suppose that $\Gamma$ is a smooth hypersurface which is also the boundary of a bounded open set topologically equivalent to the sphere. Now we state the regularity conditions imposed on the initial condition $p$:

\mn
(C1) $\Gamma$ is $C^\alpha$ for some $\alpha>3$.

\mn
(C2) For $x$ inside $\Gamma$, $p(x)<u_0$. For $x$ outside $\Gamma$, $p(x)>u_0$.

\mn
(C3) There exists $r,\gamma>0$ such that for all $x\in\RR^d$, $|p(x)-u_0|\geq \gamma (dist(x,\Gamma)\wedge r).$

\mn
The conditions (C1)--(C3) guarantee that the mean curvature flow $\mathbf{\Gamma}=\{ \Gamma_t: t\geq 0\}$ started from $\Gamma$ exists up to some finite time $\mathscr{T}$, see e.g., Evans and Spruck \cite{ES91}. With $\Gamma_t$ properly defined, the meaning of $d(x,t)$ is now precise: it is the signed distance from $x$ to $\Gamma_t$, positive outside $\Gamma_t$ and negative inside. Note that $\Gamma_t=\{ x\in \RR^d: d(x,t)=0\}$.

In the scope of this paper we consider only the evolution of a single interface. In more general scenarios there could be multiple interfaces evolving together, say nested interfaces.

\begin{theorem}\label{main0}
Let $\xi^\ep_t$ denote a rescaled particle system on $\delta\ep\ZZ^d$ within the two classes considered. Suppose $\xi^\ep_t$ satisfies (G0)--(G5), and let $u^\ep(t,x)=P(\xi^\ep_t(x)=1)$ with $u^\ep(0,x)=p(x)$. Let $T^*\in(0,\mathscr{T})$ and $k\in \mathbb{N}$ be fixed. There exist $\ep_d(k)>0$ and $b_d(k),c_d(k)\in (0,\infty)$ such that for all $\ep\in(0,\ep_d)$ and $t$ satisfying $b_d(k)\ep^2|\log \ep|\leq t\leq T^*$,

\begin{enumerate}
\item for $x$ such that $d(x,t) \geq c_d(k)\ep|\log \ep|$, we have 
$u^{\ep}(t,x)\in (u_+-\ep^k, u_+ + \ep^k)$,
\item for $x$ such that $d(x,t)\leq -c_d(k)\ep|\log \ep|$, we have 
$u^{\ep}(t,x)\in (u_- -\ep^k,u_-+\ep^k)$.
\end{enumerate}
\end{theorem}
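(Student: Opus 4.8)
The plan is to follow the strategy of Etheridge, Freeman, and Pennington \cite{Etheridge} and prove the result via the dual branching process together with a careful analysis of the iterated $g$-map. The first step is to reduce the statement about $u^\ep(t,x) = P(\xi^\ep_t(x)=1)$ to a deterministic question about the PDE \eqref{resSR} (or its analogue for each model). By Sections~2 and the duality discussion, in the fast-stirring limit $\delta \to 0$ the dual process converges to a branching Brownian motion, so $u^\ep(t,x)$ is, up to errors that are superpolynomially small in $\ep$, equal to the quantity obtained by running the branching Brownian motion down from $(x,t)$, assigning each time-$0$ leaf at $y$ an independent Bernoulli$(p(y))$ value, and propagating majority/weighted-vote up the tree. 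Equivalently one works directly with the solution $v^\ep$ of $\partial_t v^\ep = \tfrac12 \Delta v^\ep + \ep^{-2}(g(v^\ep)-v^\ep)$, which by Chen's results \cite{Chen92} develops an interface evolving by motion by mean curvature. So the task becomes: show that $v^\ep(t,x)$ is within $\ep^k$ of $u_\pm$ once the signed distance $d(x,t)$ to the interface exceeds $\pm c_d(k)\,\ep|\log\ep|$, provided $t \ge b_d(k)\,\ep^2|\log\ep|$.

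The core of the argument is the one-dimensional / ODE analysis of how fast the solution is attracted to the stable fixed points $u_\pm$, and here assumptions (G1)--(G5) do all the work. First I would establish a comparison-principle-based bound: because $g$ is monotone (G5), the map $p \mapsto v^\ep$ preserves ordering of initial data, so it suffices to sandwich $v^\ep$ between sub- and supersolutions built from the spatially-homogeneous dynamics $\dot w = \ep^{-2}(g(w)-w)$. Iterating the vote operation corresponds to iterating $g$; by (G2) the derivative $g'(u_\pm) < 1$ gives linear (geometric) contraction toward $u_\pm$ near those points, while (G4) upgrades this to the quantitative statement that each application of the sped-up flow over a time step of order $\ep^2$ shrinks the distance to $u_\pm$ by a factor $(1-c_0)$ on the regime $\delta \le \delta_0$. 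Concavity/convexity (G3) together with the symmetry (G1) controls the intermediate regime where $v^\ep$ is not yet close to $u_\pm$ but is on the correct side of $u_0$: there $g(p)-p$ has a definite sign pushing toward the appropriate stable point, with a lower bound on its magnitude away from the three fixed points. Combining these, after $O(|\log\ep|)$ steps — i.e. after time $O(\ep^2|\log\ep|)$, which is where $b_d(k)$ comes from — any point that started on the $u_+$ side and stays at signed distance $\ge c_d(k)\ep|\log\ep|$ from the interface is driven into $(u_+-\ep^k,u_++\ep^k)$, with the number of iterations $k$ entering linearly in the constants $b_d,c_d$ through $(1-c_0)^{\#\text{steps}} \le \ep^k$.

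The geometric input — that the set $\{v^\ep(t,\cdot) = u_0\}$ really is close to a smooth hypersurface moving by mean curvature, and that the "distance to the interface" $d(x,t)$ is well-defined and has the right sign dichotomy on the stated time interval $[b_d(k)\ep^2|\log\ep|,T^*]$ — is imported wholesale from \cite{Chen92} (and the way \cite{Etheridge} packages it), including the handling of geometric singularities: one uses the level-set formulation so that $d(x,t)$ makes sense for all $t$ up to $T^*$ even through topological changes. The point of Theorem~\ref{main0} is only the sharp $\ep|\log\ep|$-width estimate on the transition layer and the sharp $\ep^2|\log\ep|$ time to form it. The main obstacle I anticipate is the transition from the particle system to the PDE with the required precision: one needs the error between $u^\ep(t,x)$ and $v^\ep(t,x)$ to be smaller than $\ep^k$ uniformly, which forces the choice $\ep = \log^{1/3}(1/\delta)$ (so that $\delta$ is superpolynomially small in $\ep$) and requires controlling the discrepancy between the true dual and the branching Brownian motion — coalescence of lineages before the branching scale, and the lattice-versus-continuum error — over a time horizon $T^*\ep^{-2}$ that grows as $\ep\to 0$. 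Handling this accumulation of small errors over a long time, while keeping the bound below the moving target $\ep^k$, is the delicate part; everything downstream is a deterministic iteration estimate governed by (G0)--(G5).
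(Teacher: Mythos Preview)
Your proposal diverges from the paper's approach at the central step and carries a real gap. You want to pass from the particle system to the PDE $v^\ep$ and then invoke Chen's estimates \cite{Chen92}, identifying the particle-to-PDE comparison as the delicate part. The paper does neither of these things: it never compares $u^\ep$ to a PDE solution and never appeals to Chen's theorem. Instead it works entirely with the branching-Brownian-motion dual $\hat{\XX}^\ep$. First it proves a purely one-dimensional result (Theorem~\ref{thm2.5}) for BBM started with the Heaviside data $p_0=u_+1_{\{x\ge 0\}}+u_-1_{\{x<0\}}$; then it couples the signed distance $d(W_s,t-s)$ of the $d$-dimensional motion to a one-dimensional Brownian motion (Proposition~\ref{P2.13}); and finally it propagates the interface forward in time by an argument by contradiction (Proposition~\ref{2.16}, driven by the one-step comparison Lemma~\ref{217}). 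The approximation of the true dual by the branching random walk (Lemmas~\ref{independence}, \ref{err}, \ref{voterinde}) is the easy part once $\ep=\log^{1/3}(1/\delta)$; the hard part is precisely the interface propagation you propose to outsource.

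The concrete gap is your claim that ``it suffices to sandwich $v^\ep$ between sub- and supersolutions built from the spatially-homogeneous dynamics $\dot w=\ep^{-2}(g(w)-w)$.'' Spatially constant barriers cannot follow a moving curved interface; one needs a profile that carries the interface with it, and constructing barriers of width $O(\ep|\log\ep|)$ that remain ordered for all $t\le T^*$ is the content of the theorem, not an input. Likewise, saying the geometric input is ``imported wholesale from \cite{Chen92}'' is circular: what must be shown is that the transition layer of the \emph{particle system} stays within $O(\ep|\log\ep|)$ of $\Gamma_t$. The paper's replacement for sub/supersolutions is the combination of Proposition~\ref{prop2.11} (concavity of $z\mapsto P^t_z(\TT)$, yielding the slope bound Corollary~\ref{cor2.12}) with the bootstrap in Proposition~\ref{2.16}, which pushes the one-dimensional estimate forward one branching event at a time. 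Your ODE/iteration analysis of (G2)--(G4) is correct and does appear (Lemma~\ref{lemma2.8}), but it handles only the \emph{generation} of the interface over the short window $[\delta_d,\delta'_d]$ (Proposition~\ref{215}); by itself it does not control the interface on $[0,T^*]$.
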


Our proof follows \cite{Etheridge} very closely. As we give our proof we will describe the correspondence between the two arguments. Here we give a brief outline of the proof to highlight the main steps. From now on we will let $\BB(t)$ denote the one-dimensional branching Brownian motion, while $\WW(t)$ denotes branching Brownian motion in $d\geq 2$.

\mn
{\bf Step 1.} Prove a result in one dimension. Let $u_-, u_+$ be the stable fixed points of $g$, and let $\VV(\BB(t))$ be the result of applying the algorithm defined in Section \ref{compp} to compute the state when the initial density is $p_0(x)=u_- \cdot 1_{\{x<0\}}+u_+\cdot 1_{\{x\geq 0\}}$. In $d=1$ the interface is a single point and there is no curvature so it does not move. The one dimensional version of Theorem \ref{main0}  is Theorem \ref{thm2.5}.   These results are proved by combining facts about the iteration,  with information on the structure of the tree and bounds on the movement of Brownian motion.

\mn
{\bf Step 2}. Generalize Theorem \ref{thm2.5} to $d\ge 2$ with $x$ replaced by $d(x,t)$, the signed distance from the $x$ to the interface $\Gamma_t$, see Proposition \ref{215}. 

\mn
{\bf Step 3.} Proposition \ref{215} takes care of the values away from the interface. The next step is to take care of the values near the interface by showing that the probability the dual gives a 1 (resp. 0) at $x$ for a general initial condition $p$ is almost the same as the probability the algorithm in Section \ref{compp} computes a 1 (resp. 0) at $d(x,t)\pm  K_1e^{K_2t}\ep|\log\ep|$ in the one dimensional system with the special initial condition $p_0(x)=u_- \cdot 1_{\{x<0\}}+u_+\cdot1_{\{x\geq 0\}}$. See Proposition \ref{2.16} for this result. The key step to proving Proposition \ref{2.16} is  Lemma \ref{217}, which compares the values computed by the algorithm in $d=1$ at 
\begin{align*}
z^\pm_1 & = d(\hat{X}^\ep_s,t-s)\pm \gamma(t-s)\ep|\log \ep|,\\
z^\pm_2 & = B_s\pm \gamma(t)\ep|\log \ep|,
\end{align*}
where $\hat{X}^\ep_s$ is an approximation of the rescaled dual process that will be defined in Section \ref{combrw}.

\clearp

\section{Dual process and branching random walk}\label{dual}

\subsection{The sexual reproduction model}

\subsubsection{The graphical representation}\label{sexualdual}

We begin by constructing the process using a graphical representation that consists of a  collection of independent Poisson processes. Here, we give only a brief description of the construction.  More details can be found in Section 2a of \cite{DN94}. Define 
\beq\label{cstar}
c^*(\ep)= \sum_i \sup_{\xi^\ep\in  \{0,1\}^{\delta\ep\ZZ^d}} c^\ep_i(0, \xi^\ep)=(\beta+1)\ep^{-2},
\eeq
where $c^\ep_i(0, \xi^\ep)$ is the rate that the origin changes to state $i$  in the process $\xi^\ep_t$ when the configuration is $\xi^\ep$ .

\begin{itemize}
\item For every site $x\in \delta\ep\ZZ^d$ we have a Poisson process $\{T^{b,x}_n, n\geq 1\}$ with rate $c^*(\ep)$ and a sequence of i.i.d. random variables  $\{U^x_n,n\geq 1\}$ uniform on $(0,1)$. At time $T^{b,x}_n$ we use $U^x_n$ to determine the type of change that occurs:
\begin{enumerate}
\item If  $U^x_n\in(0,\frac{\beta}{1+\beta})$, $x$ gives birth to two particles on a randomly chosen pair from $x+{\cal N}_b$.
\begin{itemize}
\item If the state of $x$ is 1, then nothing occurs to the particle at $x$.
\item If the state of $x$ is 0, then $x$ flips to 1 if both of its children are 1's.
\end{itemize}
\item If $U^x_n\in (\frac{\beta}{1+\beta},1)$:
\begin{itemize}
\item If the state of $x$ is 0, then nothing occurs to the particle at $x$.
\item If the state of $x$ is 1, then $x$ flips to 0. 
\end{itemize}
\end{enumerate}

\item For every unordered pair $x,y \in \delta\ep\ZZ^d$ with $\|x-y\|_1=\delta\ep$ we assign a Poisson process $\{ T^{x,y}_n, n\geq 1\}$ with rate $(\delta\ep)^{-2}/2$. At an arrival of this Poisson process, the states of $x$ and $y$ are exchanged.
\end{itemize}

\mn
\subsubsection{The dual is almost a branching random walk.}\label{combrw}

For a particle at site $x\in \delta\ep\ZZ^d$ at time $t$, we denote by $\{\XX^\ep_s\}_{0\leq s\leq t}$ its dual process. The dual process is naturally defined only for $0\leq s\leq t$ but it is convenient to assume that the Poisson processes and uniform random variables in the construction are defined for negative times and hence define $\XX^\ep_s$ for all $s\geq 0$. We will focus on the case where $d=2$ in later discussion of the sexual reproduction model, but the comparison to a branching random walk in this section is general in all dimensions $d\geq 1$.

Let $R^\ep_0=0$ and let $R^\ep_m$ be the $m$-th time that a branching event occurs among the particles in $\XX^\ep_s$ and set $X^\ep_0(0)=x$ to represent the initial location of the first particle.

\begin{itemize}
\item
In between the branching time $\{R^\ep_m: m\geq 1\}$ the particles move by stirring. If there is a particle at $x$ or  $y$ at time $s$ and there is an arrival in $T^{x,y}_n$ at time $t-s$ then the particle at $x$ jumps to $y$ and a particle at $y$ jumps to $x$.

\item
At time $R^\ep_1$ if the branching occurs at $x_1$ we uniformly choose a pair of neighbors $x_1,x_2$ from 
$$x+\NN^\ep_b=x+\delta\ep \cdot \big\{\{e_1,e_2\}, \{-e_1, e_2\}, \{-e_1,-e_2\}, \{e_1,-e_2\} \big\},$$
add $x_1$ and $x_2$ to the dual, and number them as 1 and 2.

\item
At later branching times  $R^\ep_m$ if the branching occurs at $x_m$ then we add a randomly chosen pair from $x_m+{\cal N}^\ep_b$, and number the two new particles $2(m-1)+1$ and $2(m-1)+2$. A \textit{collision} is said to happen if a particle is born at the location already occupied by another particle. In this case the colliding particle is not added to $\XX^\ep$.  We also construct a (noncoalescing) branching random walk $\hat{\XX}^\ep$ in which two particles are always added, and if there is a collision an independent graphical representation is used to determine its movements. 
\end{itemize}

Notice that in the sexual reproduction model once a particle flips to state 0 its future is then independent of all its past. When constructing the dual process, once a particle flips to state 0 we don't necessarily need to probe into its past anymore. We can either treat this particle as ``dead" and do not let it branch again since we don't need information about its past, or we can let it branch despite its state so that the resulting dual process has the structure of a regular tree. Here we take the second treatment.

In order to have the probability of collisions in the dual process $\XX^\ep_t$ to be small we have chosen $\delta=\exp(-\ep^{-3})$, i.e., 
$\ep = (\log(1/\delta))^{-1/3}$,  so that $\delta \ll \ep$. Intuitively, if the stirring rate is large enough compared to the branching rate, then particles do not stay near each other for a sufficiently long time to have a birth that causes collisions. To simplify notation, we will write 
$$
\eta=\delta \ep
$$ 
from now on. In this notation, the dual process $\XX^\ep$ on $\eta\ZZ^d$ jumps at rate $2d\cdot \eta^{-2}/2$ to a randomly chosen neighbor.
   
\begin{lemma}\label{independence}
Let $T\in(0,\infty)$, $k\in\mathbb{N}$ and $x\in \RR^d$ be fixed. Let $\XX^\ep$ and $\hat{\XX^\ep}$ be defined as above and both start at $x$. There exists $\ep^*(k,T)>0$ so that for $\ep\in (0,\ep^*(k,T))$, 
$$P^\ep_x(\XX^\ep(t)=\hat{\XX^\ep}(t) \text{ for all } t\leq T)\geq 1-\ep^k.$$
\end{lemma}
\begin{proof}
This proof comes from Durrett and Neuhauser \cite{DN94}. To be self-contained we will present most of the details. We say a particle $X^\ep_k$ is \textit{crowded} at time $s$ if for some $j\neq k$, $\|X^\ep_k(s)-X^\ep_j(s)\|_1\leq \eta$.  To bound the number of collisions, we need to estimate the amount of time $X^\ep_k$ is crowded. Let $j\neq k$, $V^\ep_s=X^\ep_k(s)-X^\ep_j(s)$ and $W^\ep_s$ be a random walk that jumps to a randomly chosen neighbor at rate $2d\eta^{-2}$. Let $x,y\in \eta\{\pm e_1,\dots,\pm e_d\}$.  Then
\begin{center}
\begin{tabular}{ c c c } 

jumps from $x$ to & rate in $V$ & rate in $W$ \\ 
 $-x$ &  $\eta^{-2}/2$ & 0 \\ 
 0 & 0 & $ \eta^{-2}$ \\ 
 $x+y$& $\eta^{-2}$ &  $\eta^{-2}$\\
\end{tabular}
\end{center}
Since we are interested in $\|V^\ep_s\|_1$ we can ignore the first line in the above table, which does not change the norm. Then we can couple the jumps so that  $|\{s\leq t:\|V^\ep_s\|_1\leq \eta\}|$ is stochastically smaller than $w^{\eta}_t=|\{ s\leq t: \|W^\ep_s\|_1\leq \eta\}|$. Asymptotic results for random walks imply,  see (2.1) at page 301 of \cite{DN94}, that when $t\eta^{-2}\geq 2$,
$$
E w^{\eta}_t\leq
\begin{cases}
C\eta^2, &d\geq 3,\\
C\eta^2\log (\eta^{-2}),&d=2,\\
C\eta t^{1/2}, &d=1.
\end{cases}
$$
Let $\chi^k_\ep(t)$ be the amount of time $X^\ep_k$ is crowded in $[0,t]$ and $\KK_t$ be the total number of particles at time $t$. It is easy to see 
\begin{align*}
E(\chi^k_\ep(t)|\KK_t=K)&\leq K E w_t^{\eta},\\
E \KK_t&=\exp( \nu t), \text{ where }\nu=3c^*\ep^{-2},\\
E(\chi^k_\ep(t)) &\leq \exp(\nu t)E w^{\eta}_t.
\end{align*}
To see that with high probability no collisions occur, we note that the expected number of births from $X^\ep_k$ while there is some other $X^\ep_j$ in its neighborhood is (consider the worst case when $d=1$)
$$\leq E(\chi^k_\ep(t)) c^*\ep^{-2}\leq C_0 \eta t^{1/2}\ep^{-2} \exp(\nu t)$$
Take $K=\eta^{-0.2}$. Then $P(\KK_t>K)\leq K^{-1}\exp(\nu t)=\eta^{0.2}\exp(\nu t)$. When $\KK_t\leq K$, the expected number of collisions is smaller than 
$$KC_0 \eta t^{1/2}\ep^{-2} \exp(\nu t).$$
Combining the error probabilities we have the probability of a collision occurring before time $T$ is smaller than
\beq\label{probcollision}
\eta^{0.2}\exp(\nu T)+KC_0 \eta T^{1/2}\ep^{-2} \exp(\nu T)
\eeq
Since $\eta=\delta\ep=\ep\exp(-\ep^{-3})$ the above term vanishes as $\ep\to 0$ and decays faster than any polynomial of $\ep$. Then for any given $k\in\mathbb{N}$,  there exists some $\ep^*(k,T)$ so that when $\ep<\ep^*(k,T)$ the probability of collision \eqref{probcollision} is less than $\ep^k$. When there is no collision between $[0,T]$, $\XX^\ep(t)=\hat{\XX}^\ep(t)$ for all $0\leq t\leq T$.
\end{proof}

\subsubsection{Our random walks are close to Brownian motion}
Let $\hat{X}^\ep_t$ represent a single lineage in the comparison process $\hat{\XX}^\ep(t)$. At each branch point we will choose one lineage of the offspring particles to be $\hat{X}^\ep_t$ uniformly at random. We start by showing that the trajectory of a single lineage $\hat{X}^\ep_t$ of the dual process is close to a Brownian motion $W_t$ in $\RR^d$ when $\ep$ is small. Recall that $\hat{X}^\ep_t$ is a random walk that jumps at rate $d\eta^{-2}$ to a randomly chosen neighbor.

\begin{lemma}\label{err}
Let  $\hat{X}^\ep_t$ be a single lineage started at $x$ and let $k\in \mathbb{N}$. There exists some $\ep_0(k)$ and a coupling between the Brownian motion $W_t$ in $\RR^d$ and $\hat{X}^\ep_t$ so that for $\ep\in(0,\ep_0(k))$
$$
P( |W_{t}-\hat{X}^\ep_{t}|\geq \ep \text{ for some }t\leq k\ep^2|\log\ep|)\leq \ep^{2k}.
$$
\end{lemma}

\begin{proof}
Write $\hat{X}^\ep_{t}=( \hat{X}^{1,\ep}_t, \dots, \hat{X}^{d,\ep}_t)$ where for each $1\leq i\leq d$ $ \hat{X}^{i,\ep}_t$ is a random walk on $\eta \ZZ$ with rate $\eta^{-2}$. Let $\{N_i(t): 1\leq i\leq d\}$ be independent Poisson processes with rate $\eta^{-2}$ and let $Y^{(i)}_{1},Y^{(i)}_{2},\dots$ be i.i.d. random variables uniform on $\{-\eta,\eta\}$. For $1\leq i\leq d$, define discrete time random walks $S^{(i)}_n:=Y^{(i)}_{1}+Y^{(i)}_{2}+\cdots +Y^{(i)}_{n}$. Then we can observe that $S^{(i)}_{N_i(t)}$ has the same distribution as $\hat{X}^{i,\ep}_t$. Furthermore, if we let $N(t)=\sum_{i=1}^d N_i(t)$ then $S_{N(t)}:=(S^{(1)}_{N_1(t)},\dots, S^{(d)}_{N_d(t)})$ has then same distribution as $\hat{X}^{\ep}_t$.

From now on we consider the first coordinate $\hat{X}^{1,\ep}_t$ and $S^{(1)}_{N_1(t)}$ of the two random walks. Write $x=(x_1,\dots,x_d)$. By Skorohod's embedding theorem (see \cite{OK97} Theorem 12.1), there is a Brownian motion $B_t$ in $\RR$ started at $x_1$ and a sequence of stopping times $0=\tau_0\leq \tau_1\leq \dots$ such that $B(\tau_i)=S^{(1)}_i$. Moreover, the differences $\tau_i-\tau_{i-1}$ are i.i.d. with 
$E(\tau_i-\tau_{i-1})=E|Y^{(1)}_1|^2=\eta^2$ and $E(\tau_i-\tau_{i-1})^2\leq 4E|Y^{(1)}_1|^4=4\eta^4$.

Note that 
$$
\tau_{N_1(t)}- t= (\tau_{N_1(t)}-N_1(t)E\tau_1)+(N_1(t)E\tau_1-t)
$$
is a martingale, so $L_2$-maximal inequality implies 
$$
E\left(\max_{0\leq s\leq t} |\tau_{N_1(s)}-s|^2\right)
\leq 4E(\tau_{N_1(t)}-t)^2\leq 4E[N_1(t)]\cdot Var(\tau_1) \leq 16t\eta^2. 
$$
By Chebyshev's inequality, 
\beq\label{tau}
P\left(\max_{0\leq s\leq t}|\tau_{N_1(s)}-s|\geq \eta^{1/2}\right)
\leq \eta^{-1} E\left(\max_{0\leq s\leq t} |\tau_{N_1(s)}-\sigma^2 s|^2\right)\leq 16t\eta. 
\eeq

Write $W_t=(W^{(1)}_t,\dots,W^{(d)}_t)$. Since $W^{(1)}_t$ is itself a one dimensional Brownian motion, without loss of generality we can let $W^{(1)}_t=B_t$. Notice that $|\hat{X}^{1,\ep}_t-W^{(1)}_t|=|S^{(1)}_{N_1(t)}-B(t)|=|B(\tau_{N_1(t)})-B(t)|$. Then applying \eqref{tau}
\begin{align*}
&P(|W^{(1)}_{t}-\hat{X}^{1,\ep}_t|\geq \ep/\sqrt{d} \text{ for some }t\leq k\ep^2|\log \ep|)\\
=& P(\max_{0\leq t\leq k\ep^2|\log \ep|}|B_{\tau_{N_1(t)}}-B_{t}|\geq \ep/\sqrt{d})\\
 \leq& P(\max_{0\leq t\leq k\ep^2|\log \ep|}|\tau_{N_1(t)}-t|\geq \eta^{1/2})\\
 &+P(\max_{0\leq t\leq k\ep^2|\log \ep|}|\tau_{N_1(t)}-t|< \eta^{1/2}, \max_{0\leq t\leq k\ep^2|\log \ep|}|B_{\tau_{N_1(t)}}-B_{t}|\geq \ep/\sqrt{d})\\
 \leq&16 k\ep^2|\log \ep|\eta+P(N_1(k\ep^2|\log \ep|)>\eta^{-2.5}k\ep^2|\log \ep|)\\
 & +\eta^{-2.5}k\ep^2|\log \ep| \cdot P( \sup_{s\in [-\eta^{1/2},\eta^{1/2}]} |B(s)-B(0)|\geq \eta^{1/6}).
\end{align*}
The second term is upper bounded by $\eta^{1/2}$ due to Markov inequality. To estimate the third term, let $Z$ be a standard Gaussian variable. By reflection principle,
\begin{align*}
P(\sup_{s\in [-\eta^{1/2},\eta^{1/2}]} |B(s)-B(0)|\geq \eta^{1/6})&\leq  2P(\sup_{s\in [0,\eta^{1/2}]} |B(s)-B(0)|\geq \eta^{1/6}) \\
&\leq 4P(|B(\eta^{1/2})-B(0)|\geq \eta^{1/6})=4P( \eta^{1/4}Z\geq \eta^{1/6})\\
&\leq 4e^{-\eta^{-1/6}/4}.
\end{align*}
Thus 
\begin{align}\label{onedimdiff}
\nonumber&P(|W^{(1)}_{t}-\hat{X}^{1,\ep}_t|\geq \ep/\sqrt{d} \text{ for some }t\leq k\ep^2|\log \ep|) \\
\leq &16k\ep^2|\log \ep|\eta+\eta^{1/2}+4\eta^{-2.5}k\ep^2|\log \ep|e^{-\eta^{-1/6}/4}\leq C\eta^{1/2}
\end{align}
for some $C>0$. Finally, it follows from \eqref{onedimdiff} that
\begin{align*}
P(|W_{t}-\hat{X}^{\ep}_t|\geq \ep \text{ for some }t\leq k\ep^2|\log \ep|)&\leq d\cdot P(|W^{(1)}_{t}-\hat{X}^{1,\ep}_t|\geq \ep/\sqrt{d} \text{ for some }t\leq k\ep^2|\log \ep|)\\
&\leq dC\eta^{1/2}\leq \ep^{2k}.
\end{align*}
\end{proof}

%===================================

\subsection{Voter model perturbations}\label{vmp}

\subsubsection{The dual is close to a branching random walk}\label{voterdual}
The dual process $\XX^\ep$ is a coalescing branching random walk. As a result of the coalescence, the dual process does not have the tree structure that leads to independence among subtrees. The situation is not too bad once we realize coalescence mostly happens between particles with the same parent in a short amount of time after their births. Hence we will construct a comparison process $\hat{\XX^\ep}$ that has the desired tree structure.

Recall that the voting and branching neighborhoods are
$$
{\cal N}^\ep_v = \{-\eta, \eta\}^d \quad \text{ and }\quad \NN^\ep_b=[-\eta L,\eta L]^d \cap \eta \ZZ^d
$$
for a fixed $L$.
Let $J(t)$ denote the set of particles in $\XX^\ep$ at time $t$. If two particles $i$ and $j$ coalesce at time $s$, then $i\vee j$ is removed from $J(s-)$ to form $J(s)$. Set $R_0=0$ and let $R_m$ be the $m$-th branching time in $\XX^\ep$. Similarly, define $\hat{J}(t)$ and $\hat{R}_m$ for the process $\hat{\XX}^\ep$. 

%%%How to define $R_m$ and \hat{R}_m
The comparison process $\hat{\XX}^\ep$ is constructed as follows:
\begin{itemize}
\item At time $\hat{R}_m$, the parent particle at $x$ gives birth to $N_0=4$ particles at sites $(Y_1,\dots,Y_4)$ chosen uniformly without replacement from $x+\NN^\ep_b$. 
\item During $[\hat{R}_m, \hat{R}_m+\eta^{1/2})$ we do not allow birth events. The particles move as coalescing random walks in $\eta\ZZ^d$ and we allow the particles within the new family (i.e. the parent particle and its $N_0$ children) to coalesce with each other. 
\item During $[\hat{R}_m+\eta^{1/2},\hat{R}_{m+1})$ the particles move as random walks without coalescing and give births at rate $\ep^{-2}$.
\end{itemize}
If we view the interval $[\hat{R}_m, \hat{R}_m+\eta^{1/2})$ as one single point in time then the process $\hat{\XX}^\ep$ would have the desired tree structure where each vertex has a random number of offspring depending on the coalescence. Note that 
$$\hat{R}_{m+1}-\hat{R}_m \overset{d}{=} \sqrt{\eta}+\text{Exponential}(\ep^{-2} \hat{J}(\hat{R}_m+\sqrt{\eta})).$$

The graphical representations of $\XX^\ep$ and $\hat{\XX}^\ep$ can be coupled until there is a coalescence in $\XX^\ep$ that is not in $\hat{\XX}^\ep$. Whenever this happens we use an independent graphical representation to determine the movement of the non-coalesced particle in $\hat{\XX}^\ep$. We hope to couple $\XX^\ep$ and $\hat{\XX}^\ep$ in a way that the former is dominated by the latter. The obstacles in doing so are (i) $\XX^\ep$ can have births during intervals $\{[\hat{R}_m, \hat{R}_m+\eta^{1/2}): m\geq 1\}$ (ii) if the scenario in (i) does not happen, since $\hat{\XX}^\ep$ has more particles ever since the first coalescence in $\XX^\ep$ that is not in $\hat{\XX}^\ep$, the branching times $\hat{R}_m$ could arrive faster than $R_m$. As we will prove soon, both (i) and (ii) will not be the case with high probability. Our goal is to establish the following coupling between $\XX^\ep$ and $\hat{\XX}^\ep$ in such a way that the former is dominated by the latter.

\begin{lemma}\label{voterinde}
Let $T\in(0,\infty)$, $k\in\mathbb{N}$ and $x\in \RR^d$ be fixed. Let $\XX^\ep$ and $\hat{\XX}^\ep$ be defined as above and both start at $x$. There exists $\ep^*(k,T)>0$ so that for $\ep\in (0,\ep^*(k,T))$, 
$$P^\ep_x(\XX^\ep(t)=\hat{\XX}^\ep(t) \text{ for all } t\leq T)\geq 1-\ep^k.$$
\end{lemma}

\begin{proof}
Let $N_{T}=\min\{m: R_m>{T}\}$ and  define the good events
\begin{align*}
&G_1=\{ R_m-R_{m-1}>\sqrt{\eta} \hh\text{ for all }1\leq m\leq  N_{T}\}\\
&G_2=\{ R_m=\hat{R}_m \hh\text{ for all }1\leq m\leq  N_{T}\}\\
&G_3=\{ J(s)=J(R_{m-1}+\sqrt{\eta})  \text{ for all }s\in [R_{m-1}+\sqrt{\eta},R_m) \text{ and all }1\leq m\leq  N_{T}\}.
\end{align*}
Observe that on $G\equiv G_1\cap G_2\cap G_3$ we can couple $\XX^\ep$ and $\hat{\XX}^\ep$ exactly. Hence it suffices to upper bound $P(G^c)$. The estimates have already been done in detail in \cite{CDP13} so we will cite the relevant results instead of repeating the arguments. 

Lemma 2.4 in \cite{CDP13} gives 
$$P(G_1^c)=P(\min_{1\leq m\leq N_{T}} R_m-R_{m-1}\leq \sqrt{\eta})\leq \eta^{1/8}.$$

Let $X^{\ep,j}_s$ denote the location of particle $j$ in $\XX^\ep$ at time $s$. Define 
$$\tau_m=\inf \{ s\geq R_{m-1}+\sqrt{\eta}: \inf_{i\neq j\in J(s)}: |X^{\ep,i}_s-X^{\ep,j}_s|\leq \eta^{7/8}\}$$
%Note that $\XX^\ep$ is dominated by a noncoalescing branching random walk $\bar{\XX}^\ep$ that is constructed simply by ignoring all coalescence in $\XX^\ep$. \tcr{explain how $\bar{\XX}^\ep$ is used} 
 Lemma 2.7 in \cite{CDP13} gives 
$$P(G_3^c)=P(\tau_m<R_m \text{ for some }1\leq m\leq N_{T})\leq \eta^{1/16}.$$
The memoryless property of exponential random variables implies that  
$$(R_{m+1}-R_m | G_1\cap G_3) \overset{d}{=} \sqrt{\eta}+\text{Exponential}( J(R_m+\sqrt{\eta})).$$
We will argue by induction that $G_1\cap G_3\subseteq G_2$. First note $R_0=\hat{R}_0=0$. Suppose $R_m=
\hat{R}_m$ holds up to $m=k$ on $G_1\cap G_3$. Then we should have $J(R_k+\sqrt{\eta})=\hat{J}(\hat{R}_k+\sqrt{\eta})$ on $G_1\cap G_3$. This means
\begin{align*}
(R_{k+1}-R_k | G_1\cap G_3)& \overset{d}{=} \sqrt{\eta}+\text{Exponential}( \hat{J}(\hat{R}_k+\sqrt{\eta}))\\
& \overset{d}{=} \hat{R}_{k+1}-\hat{R}_k
\end{align*}
Therefore $R_{k+1}=\hat{R}_{k+1}$ on $G_1\cap G_3$ and this concludes the proof of $G_1\cap G_3\subseteq G_2$.
Finally,
$$P(G^c)\leq P(G_1^c)+P(G_3^c)\leq \eta^{1/8}+\eta^{1/16}\leq \ep^k$$
for any $k\in\mathbb{N}$ when $\ep$ is sufficiently small.
\end{proof}

\subsubsection{Our random walks are almost Brownian motions}

We will show the trajectory of a single lineage $\hat{X}^\ep_t$ of the dual process is close to a Brownian motion $W_t$ in $\RR^d$. Note that $\hat{X}^\ep_t$ is a random walk in $\eta \ZZ^d$ that jumps at rate $\eta^{-2}$ to a site chosen uniformly random from its neighborhood of the form $\NN^\ep_v=\{-\eta,\eta\}^d$. The following lemma is essentially the same as Lemma \ref{err}. Note that the random walk in Lemma \ref{err} has jump rate $2d\cdot \eta^{-2}/2$ while here the random walk has jump rate $\eta^{-2}$, implying that $\hat{X}^\ep_t$ would converge to a time-changed Brownian motion. The proof is essentially the same as that of Lemma \ref{err} and hence is omitted.

\begin{lemma}\label{errv}
Let  $\hat{X}^\ep_t$ be a single lineage started at $x$ and let $k\in \mathbb{N}$. There exists $\ep_0(k)$ and a coupling of the Brownian motion $W_t$ in $\RR^d$ and $\hat{X}^\ep_t$ so that for $\ep\in(0,\ep_0(k))$
$$P( |W_{\sigma^2 t}-\hat{X}^\ep_{t}|\geq \ep \text{ for some }t\leq k\ep^2|\log\ep|)\leq \ep^{2k}$$
for $\sigma=1/\sqrt{d}$.
\end{lemma}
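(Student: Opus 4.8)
The plan is to follow the proof of Lemma~\ref{err} essentially line for line; the only genuine change is that the single lineage here jumps at total rate $\eta^{-2}$ (into a uniformly chosen neighbour) rather than at rate $d\eta^{-2}$, and this is precisely what forces the time change, namely the limiting Brownian motion runs at speed $\sigma^2=1/d$ instead of $1$.

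First I would realize $\hat X^\ep_t$ as a subordinated random walk: $\hat X^\ep_t\eqd S_{N(t)}$, where $N$ is a rate-$\eta^{-2}$ Poisson process and $S_n=Y_1+\cdots+Y_n$ with the $Y_i$ the i.i.d. single-step increments of the walk (uniform over the $2d$ neighbours, so $E|Y_1|^2=\eta^2$). Skorohod's embedding theorem in $\RR^d$ then supplies a Brownian motion $W$ started at $x$ and stopping times $0=\tau_0\le\tau_1\le\cdots$ with $W(\tau_i)=S_i$, the increments $\tau_i-\tau_{i-1}$ i.i.d. with $E(\tau_i-\tau_{i-1})=\tfrac1d E|Y_1|^2=\eta^2/d$ and $E(\tau_i-\tau_{i-1})^2=O(\eta^4)$. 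Consequently $E[\tau_{N(t)}]=(\eta^{-2}t)(\eta^2/d)=t/d=:\sigma^2 t$ and $\mathrm{Var}[\tau_{N(t)}]=O(t\eta^2)$.

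From here the argument of Lemma~\ref{err} applies unchanged. Writing $\tau_{N(s)}-\sigma^2 s=\big(\tau_{N(s)}-\tfrac{\eta^2}{d}N(s)\big)+\big(\tfrac{\eta^2}{d}N(s)-\sigma^2 s\big)$ as a sum of two martingales and applying Doob's $L^2$ maximal inequality gives $E\big[\max_{s\le t}|\tau_{N(s)}-\sigma^2 s|^2\big]\le Ct\eta^2$, whence Chebyshev yields $P\big(\max_{s\le t}|\tau_{N(s)}-\sigma^2 s|\ge\eta^{1/2}\big)\le Ct\eta$. Since $|\hat X^\ep_t-W(\sigma^2 t)|=|W(\tau_{N(t)})-W(\sigma^2 t)|$, on the complement of that event the left side is at most the oscillation of $W$ over a time window of length $\eta^{1/2}$, which exceeds $\eta^{1/6}$ with probability $O(\eta^{1/6})$ by the Gaussian modulus-of-continuity estimate. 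Taking $t=k\ep^2|\log\ep|$ and combining the two errors bounds the probability in the statement by $Ck\ep^2|\log\ep|\,\eta+O(\eta^{1/6})$. Because $\eta=\delta\ep=\ep\exp(-\ep^{-3})$ is smaller than any power of $\ep$, this is $\le\ep^{2k}$ once $\ep$ is small, for every fixed $k$.

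There is no real obstacle beyond bookkeeping: the only steps that need a moment's care are verifying that the $d$-dimensional Skorohod embedding contributes the factor $1/d$ in $E(\tau_i-\tau_{i-1})=\tfrac1d E|Y_1|^2$, so that $\sigma^2=1/d$ as claimed with $\sigma=1/\sqrt d$, and that the embedding increments still have second moment $O(\eta^4)$, so that the martingale plus Chebyshev step produces the same $Ct\eta^2$ bound as in Lemma~\ref{err}. Everything else is identical, which is why the statement says the proof is omitted.
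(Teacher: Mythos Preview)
Your proposal is correct and follows exactly the approach the paper intends: the paper explicitly omits the proof of Lemma~\ref{errv}, stating it is the same as that of Lemma~\ref{err}, and you have reproduced that argument with the single necessary change (jump rate $\eta^{-2}$ in place of $d\eta^{-2}$, yielding $E[\tau_{N(t)}]=t/d$ and hence $\sigma^2=1/d$). The remaining martingale, Chebyshev, and modulus-of-continuity steps are identical to those in Lemma~\ref{err}.
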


\subsection{Computing the state of $x$ at time $t$}\label{compp}

To do this, we use the comparison process $\hat{\XX}^\ep$ constructed in Section \ref{sexualdual} and Section \ref{voterdual} and work backwards in time. $\hat{\XX}^\ep$ has a tree structure so we can follow \cite{Etheridge} to define a \textit{time-labelled tree} $\TT(\hat{\XX}^\ep(t))$ for $\hat{\XX}^\ep$. Since $\hat{\XX}^\ep$ and $\hat{\XX}$ has the same tree structure except for the rescaling, to simplify notation we consider $\TT(\hat{\XX}(t))$ from now on.

Each branch point in $\{\hat{\XX}\}_{0\leq s\leq t}$ is a vertex in the tree $\TT(\hat{\XX}(t))$ and is assigned a time label $t_v$ that is the corresponding branching time in $\hat{\XX}$. For the sexual reproduction model, at each branch point the parent gives birth to $N_0=2$ children, so the tree $\TT(\hat{\XX}(t))$ branches into 3 lineages. For the voter model perturbations, at each branch point the parent gives birth to $N_0=4$ children some of whom will coalesce into one. There are two ways to look at $\TT(\hat{\XX}(t))$: we can either see it as a Galton-Watson tree where the offspring distribution is determined by the coalescence, or we can still see it as a regular tree where each vertex has $N_0+1$ children and deal with the influence of coalescence in a computing process that will be introduced later as an \textit{algorithm}. Here we take the second approach.

Now we will describe an \textit{algorithm} that computes the state of $x$ at time $t$ given the graphical representation and the initial states of the particles in $\hat{\XX}(t)$.  Since we are considering the dual process without rescaling, with a little abuse of notation let $p: \ZZ^d\to [0,1]$ be the initial condition.
 
\mn
\textbf{Algorithm for sexual reproduction model with fast stirring:} 
\begin{enumerate}
\item Each particle $i$ in $\TT(\hat{\XX}(t))$ is independently assigned state 1 with probability $p(\hat{X}^i_t)$ and  state 0 with probability $1-p(\hat{X}^i_t)$.
\item At each branch point $v$ in $\TT(\hat{\XX}(t))$, we have an independent random variable $U_v$ uniform on $(0,1)$ that determines the state of the parent particle according to rules specified in Section \ref{sexualdual}.
\end{enumerate}

 \mn
\textbf{Algorithm for nonlinear voter models:}
Let $\{\pi_v\}$ be a collection of i.i.d. random partition of the set $\{0, 1,\dots, N_0\}$, where $v$ represents a vertex in the $N_0+1$ regular time-labelled tree $\TT(\hat{\XX}(t))$. The law of $\pi_v$ is given by the coalescence of particles within the same family within time $\sqrt{\eta}$ after birth.
 
\begin{enumerate}
\item Each particle $i$ in $\TT(\hat{\XX}(t))$ is independently assigned state 1 with probability $p(\hat{X}^i_t)$ and  state 0 with probability $1-p(\hat{X}^i_t)$.
\item At each branch point $v$ in $\TT(\hat{\XX}(t))$, we first sample a random partition $\pi_v$. For vertices in the same cell of $\pi_v$, we uniformly choose one of them and let its state be the state of every vertex in that cell. Let $i_1$ denote the total number of $1$'s among these $N_0+1$ particles. Then an independent random variable $U_v$ uniform on $(0,1)$ is sampled. If $U_v<a_{i_1}$ then set the output to be 1, otherwise set the output to be 0.
\end{enumerate}

For Lotka-Volterra systems, since the effective perturbations only occur when there is no coalescence among the three children, see \eqref{gLV}, we can consider only such branch points and effectively reduce the branching rate to $\theta p_3\ep^{-2}$. At each branch point, the state of the chosen lineage only flips when it is opposite to both of the other lineages. This is essentially performing a majority vote, which is why \eqref{gLV} is the same as \eqref{g0}. Hence the proof for Lotka-Volterra systems is the same as that in \cite{Etheridge}. 
 
\mn
\textbf{Algorithm for Lotka-Volterra systems:}  
\begin{enumerate}
\item Each particle $i$ in $\TT(\hat{\XX}(t))$ is independently assigned state 1 with probability $p(\hat{X}^i_t)$ and  state 0 with probability $1-p(\hat{X}^i_t)$.
\item Let the branching event occur at rate $\theta p_3\ep^{-2}$. At each branch point $v$ in $\TT(\hat{\XX}(t))$, we perform a majority vote.
\end{enumerate}

Starting from states of the leaves of $\TT(\hat{\XX}(t))$, the above algorithms compute the state of the root at $x$. From now on we use use $\VV_p(\hat{\XX}(t))$ to denote the output, i.e., the state of the root of $\TT(\hat{\XX}(t))$. Note that for a branching Brownian motion $\WW_t$ in $\RR^d$ we can define $\VV_p(\WW_t)$ in the same way except that the initial condition $p$ will be defined on $\RR^d$ instead of $\ZZ^d$.

\clearp

\section{Convergence to motion by mean curvature}\label{mbymc}

Here we will prove the result assuming the $g$ function has properties (G0)-(G5).
In the next section we will check those conditions in our examples. 
A second consequence of concavity for $p\in(u_0, u_+)$ is that if $p\in[u_0+\eta,u_+ - \eta]$
\beq
g(p+\eta)-2g(p)+g(p-\eta)\leq 0.
\label{G5}
\eeq
To prove \eqref{G5}, we note that
$$
\int_{p-\eta}^p \int_x^{x+\eta} g''(y) \, dy \,dx = g(p+\eta)-2g(p)+g(p-\eta).
$$

\subsection{Branching Brownian motion in one dimension}\label{bbmd1}

Define the initial condition $p_0: \RR \to [0,1]$ to be $p_0(x)=u_+\cdot1_{\{x\geq 0\}}+u_-\cdot1_{\{x< 0\}}$ and write $\VV:=\VV_{p_0}$.
In this section we will consider one dimensional branching Brownian motion $\BB_t$, beginning by listing the useful properties of $\VV(\BB(t))$.

\mn
\textbf{Monotonicity.} When the interaction rule is attractive and the initial condition $p_0$ is nondecreasing in $x$ so for any $x_1\leq x_2\in \RR$,
$$
P^\ep_{x_1}[\VV(\BB(t))=1]\leq P^\ep_{x_2}[\VV(\BB(t))=1].
$$

\mn
\textbf{Antisymmetry.}
We use $\TT(\BB(t))$ to denote the time-labelled tree for $\BB_t$ and write 
$$
P^t_x(\TT)=P^\ep_x(\VV(\BB(t))=1| \TT(\BB(t))=\TT).
$$
Applying the reflection from $z$ to $-z$, and using the symmetry of the Brownian motion conditioned on $\{\TT(\BB(t))=\TT\}$, we see that for any time-labelled tree $\TT$ 
$$
P^t_z(\TT)=2u_0-P^t_{-z}(\TT).
$$
The last property implies $P^t_0(\TT)=u_0$. Using monotonicity we have
$$
P^t_z(\TT)\geq u_0 \quad \text{ for $z\geq 0$}, \quad P^t_z(\TT)\leq u_0 \quad \text{ for $z\leq 0$}.$$

\subsubsection{Useful inequalities}

So far the function $g: [0,1]\to [0,1]$ has a single variable. It is natural to extend $g$ to be a function on $[0,1]^{N_0+1}$. Let $(p_1,\dots, p_{N_0+1})\in [0,1]^{N_0+1}$ and $g(p_1,\dots,p_{N_0+1})$ is the probability that the output at the branch point is 1 when the inputs are independent Bernoulli random variables with rate $p_1,\dots, p_{N_0+1}$ respectively. With a slight abuse of notation, we will use $g(\cdot)$ to stand for both.

\begin{lemma}\label{1dbm}
For any time-labelled tree $\TT$, and time $t>0$ and any $z\geq 0$,
$$
P^t_z(\TT)\geq u_+ P_z(B_t\geq 0)+u_-P_z(B_t<0).
$$
\end{lemma}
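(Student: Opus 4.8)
The plan is to prove the inequality by induction on the number of branch points of the time-labelled tree $\TT$, using the recursive structure of the computation $\VV$, the Monotonicity (attractiveness) and Antisymmetry properties just recorded, the convexity/concavity of $g$ coming from (G0) and (G3), and a reflection inequality for Gaussian densities. Throughout write $\ell_s(w):=E_w[p_0(B_s)]=u_+P_w(B_s\geq 0)+u_-P_w(B_s<0)$, so that the claim is $P^t_z(\TT)\geq\ell_t(z)$ for $z\geq0$; since $p_0$ takes only the values $u_\pm$ we have $\ell_s(w)\in[u_-,u_+]$, $\ell_s$ nondecreasing, $\ell_s(-w)=2u_0-\ell_s(w)$, and $\ell_s(w)\in[u_0,u_+]$ for $w\geq0$. \emph{Base case:} if $\TT$ has no branch point then $\VV(\BB(t))$ is just the state of the unique leaf, which under $P^\ep_z$ sits at $B_t$ and is set to $1$ with probability $p_0(B_t)$; hence $P^t_z(\TT)=\ell_t(z)$, an equality, for every $z$.

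\emph{Inductive step.} Let $v$ be the topmost branch point of $\TT$, at time $t_v\in(0,t)$. Conditioning on $\{\TT(\BB(t))=\TT\}$ fixes the branch times and topology but not the trajectories, so under this conditioning the root lineage runs as free Brownian motion over $(t_v,t]$, ending at $W:=B_{t_v}$ with $W-z$ a centered Gaussian, and below $v$ hang the (given $\TT$ and $W$, conditionally independent) subtrees $\TT_0,\dots,\TT_{N_0}$ --- the marked parent subtree and the $N_0$ offspring subtrees --- each rooted at $(W,t_v)$ and each with strictly fewer branch points. Writing $a_i(w):=P^{t_v}_w(\TT_i)$ and letting $G(q_0,\dots,q_{N_0})$ be the probability the branch operation outputs $1$ when fed independent $\mathrm{Bernoulli}(q_i)$ inputs, conditional independence gives $P^t_z(\TT)=E_z[\,G(a_0(W),\dots,a_{N_0}(W))\,]$. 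Splitting the $B$-path at $t_v$ and using the strong Markov property, $E_z[\ell_{t_v}(W)]=E_z[E_W[p_0(B_{t_v})]]=E_z[p_0(B_t)]=\ell_t(z)$, so with $f(w):=G(a_0(w),\dots,a_{N_0}(w))-\ell_{t_v}(w)$ it suffices to show $E_z[f(W)]\geq0$.

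Two facts do this. \emph{(A) Sign of $f$.} By the induction hypothesis $a_i(w)\geq\ell_{t_v}(w)$ for $w\geq0$; combining $a_i(w)=2u_0-a_i(-w)$ (Antisymmetry) with the induction hypothesis at $-w\geq0$ gives $a_i(w)\leq\ell_{t_v}(w)$ for $w\leq0$; and $a_i$ stays in $[u_-,u_+]$, as this holds for trivial subtrees and is preserved because $G$ is monotone with $G(u_\mp,\dots,u_\mp)=g(u_\mp)=u_\mp$. Since $(u_0,u_0)$ and $(u_\pm,u_\pm)$ lie on the diagonal, (G3) forces $g(p)\geq p$ on $[u_0,u_+]$ (a concave function lies above its secant) and $g(p)\leq p$ on $[u_-,u_0]$ (a convex function lies below its secant); with monotonicity of $G$, for $w\geq0$ one gets $G(a_0(w),\dots)\geq G(\ell_{t_v}(w),\dots,\ell_{t_v}(w))=g(\ell_{t_v}(w))\geq\ell_{t_v}(w)$, i.e.\ $f(w)\geq0$, and symmetrically $f(w)\leq0$ for $w\leq0$. \emph{(B) An antisymmetry in expectation.} Since $P^t_z(\TT)=E_z[G(a_0(W),\dots)]$, Antisymmetry of $P^t_{\cdot}(\TT)$ gives $E_z[G(a_0(W),\dots)]+E_{-z}[G(a_0(W),\dots)]=2u_0$, while $E_z[\ell_{t_v}(W)]+E_{-z}[\ell_{t_v}(W)]=\ell_t(z)+\ell_t(-z)=2u_0$; subtracting, $E_z[f(W)]+E_{-z}[f(W)]=0$. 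Hence, writing $\varphi$ for the (even, non-increasing in $|\cdot|$) density of $W-z$ and folding the integral at $0$,
\beq
E_z[f(W)]=\tfrac12\big(E_z[f(W)]-E_{-z}[f(W)]\big)=\tfrac12\int_0^\infty\big(f(w)-f(-w)\big)\big(\varphi(w-z)-\varphi(w+z)\big)\,dw.
\eeq
For $z\geq0$ and $w\geq0$ both factors are nonnegative ($f(w)\geq0\geq f(-w)$ by (A), and $|w-z|\leq w+z$), so $E_z[f(W)]\geq0$, which closes the induction.

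The main obstacle is handling the event that the root lineage wanders to $w<0$: there the only universal lower bound on the subtree values is $u_-$, which is strictly worse than $\ell_{t_v}(w)$, so one cannot compare the regions $w>0$ and $w<0$ termwise. What makes it work is that the deficit over $\{w<0\}$ is exactly offset by the surplus over $\{w>0\}$; isolating this cancellation requires both the sign information (A) --- itself resting on the convexity/concavity in (G3), the reaction-term shadow of the wave speed being $0$ --- and the Antisymmetry-in-expectation (B), combined with the Gaussian reflection estimate $\varphi(w-z)\geq\varphi(w+z)$ for $w,z\geq0$. A secondary point worth checking is that the branch operation is genuinely monotone in each coordinate in every model (this is attractiveness), which is what legitimizes the comparisons in (A).
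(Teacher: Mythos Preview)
Your proof is correct and shares the paper's overall scaffold---induction on the number of branch points, condition on the first branch, exploit antisymmetry, and apply the Gaussian reflection inequality $\varphi(w-z)\geq\varphi(w+z)$---but the decomposition at the inductive step is genuinely different. The paper writes $g=h+\text{average}$ with $h(p_1,\dots,p_{N_0+1})=G(p_1,\dots)-\tfrac{1}{N_0+1}\sum p_i$, uses the exact pointwise antisymmetry $h(P^{t-\tau}_{-x}(\TT*))=-h(P^{t-\tau}_x(\TT*))$ together with the claim $h\geq0$ when all coordinates lie in $[u_0,u_+]$ to fold the $h$-integral, and then treats each averaged term $E_z[P^{t-\tau}_{B_\tau}(\TT_i)]$ by induction (which tacitly needs a second reflection argument, since $B_\tau$ can be negative). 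You instead subtract $\ell_{t_v}$ directly, pin down the sign of $f=G(a_0,\dots)-\ell_{t_v}$ on each half-line using coordinate-wise monotonicity of $G$ (attractiveness) together with the purely scalar fact $g(p)\geq p$ on $[u_0,u_+]$, and then invoke antisymmetry only in expectation to run a single reflection step. What your route buys is clarity about hypotheses: you never need the multivariate assertion that $h(p_1,\dots,p_{N_0+1})\geq0$ off the diagonal, only monotonicity of $G$ plus the diagonal inequality from (G3). One small notational wobble: if $t_v$ is the remaining time for the subtrees (so that $a_i(w)=P^{t_v}_w(\TT_i)$ and $E_z[\ell_{t_v}(W)]=\ell_t(z)$ are the right formulas), then the root lineage runs for dual-time $t-t_v$ and $W=B_{t-t_v}$, not $B_{t_v}$; this is a labeling slip that does not affect the argument.
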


\begin{proof}
The proof is by induction on the number of branching events in the tree $\TT$.
Suppose time $\tau$ is the first branching event in $\TT$ and that the subtrees corresponding to the $N_0+1$ offspring are $\TT_1,\dots,\TT_{N_0+1}$. Letting 
\begin{align*}
P^t_z(\TT*)&=(P^t_z(\TT_1),\dots,P^t_z(\TT_{N_0+1})).\\
h(p_1,\dots ,p_{N_0+1})&=g(p_1,\dots ,p_{N_0+1})-\frac{1}{N_0+1}(p_1+\dots+p_{N_0+1}).
\end{align*}
we can write
\begin{align*}
P^t_z(\TT)
&=E_z(g(P^{t-\tau}_{B_\tau}(\TT*))=E_z(g(P^{t-\tau}_{B_\tau}(\TT_1),\dots,P^{t-\tau}_{B_\tau}(\TT_{N_0+1})))\\
&=E_z(h(P^{t-\tau}_{B_\tau}(\TT_1),\dots,P^{t-\tau}_{B_\tau}(\TT_{N_0+1})))+\frac{1}{N_0+1}\sum_{i=1}^{N_0+1} E_z(P^{t-\tau}_{B_\tau}(\TT_i))
\end{align*}
Write $h(p)=h(p,\dots,p)$. Observe that $h(u_+-p)=-h(u_-+p)$ due to (G1), which implies
\beq\label{heq}
h(P^{t}_{-z}(\TT*))=h(2u_0-P^t_z(\TT*))=h(u_+-(-u_-+P^t_z(\TT*)))=-h(P^t_z(\TT*)).
\eeq
It follows that 
\begin{align*}
E_z(h(P^{t-\tau}_{B_\tau}(\TT*))
&=E_z(h(P^{t-\tau}_{B_\tau}(\TT*))(1_{\{B_\tau\geq 0\}}+1_{\{B_\tau< 0\}})\\
&=E_z(h(P^{t-\tau}_{B_\tau}(\TT*))1_{\{B_\tau\geq 0\}})-E_z(h(P^{t-\tau}_{-B_\tau}(\TT*))1_{\{B_\tau< 0\}})\quad \text{ (by (\ref{heq}))}\\
&=\int_0^\infty h(P^{t-\tau}_x(\TT*))( \phi_{z,\tau}(x)-\phi_{z,\tau}(-x))\;dx
\end{align*}
where $\phi_{z,t}(x)$ is the probability density function of a Brownian motion starting at site $z$ at time t. Since $P^{t-\tau}_x(\TT_i)\geq u_0$ for $x\geq 0$ we have $h(P^{t-\tau}_x(\TT*))\geq 0$. Spatial symmetry of Brownian motion and the fact that $\phi_{z,t}(x)$ is decreasing on $x\ge z$ implies $\phi_{z,\tau}(x)-\phi_{z,\tau}(-x)\geq 0$ for all $x\geq 0$. That is, $E_z(h(P^{t-\tau}_{B_\tau}(\TT*))\geq 0$.

For $i=1,\dots, N_0+1$, by the induction hypothesis
\begin{align*}
E_z(P^{t-\tau}_{B_\tau}(\TT_i))
&\geq u_+E_z( P_{B_\tau}(B_{t-\tau}\geq 0))+u_-E_z( P_{B_\tau}(B_{t-\tau}< 0))\\
&=u_+P_z(B_t\geq 0)+u_-P_z(B_t< 0).
\end{align*}
If follows that 
$$E_z(g(P^{t-\tau}_{B_\tau}(\TT*))\geq \frac{1}{N_0+1}\sum_{i=1}^{N_0+1} E_z( P^{t-\tau}_{B_\tau}(\TT_i))\geq u_+P_z(B_t\geq 0)+u_-P_z(B_t< 0).$$
\end{proof}

We define the iterates of $g$, $g^{(n)}(p)$, by
$$
g^{(n)}(p)=g( g^{(n-1)}(p)), \quad g^{(1)}(p)=g(p).
$$
The fixed points at $u_-$ and $u_+$ of $g$ are attracting and $u_0$ is unstable. That is, if we start from $u_0+\ep$, then iterating $g$ will lead to $u_+$ while if we start at $u_0-\ep$, iterating $g$ will take us down to $u_-$. Lemma \ref{lemma2.8} quantifies the rate of convergence.

\begin{lemma}\label{lemma2.8}
For all $k\in \mathbb{N}$ there exists $A(k)<\infty$ such that, for all $\ep\in(0,u_+-u_0-\delta_*)$ where $\delta_*$ is defined in (G4) and $n\geq A(k)|\log \ep|$ we have 
$$g^{(n)}(u_0+\ep)\geq u_+-\ep^k \quad \text{ and }\quad g^{(n)}(u_0-\ep)\leq u_--\ep^k.$$
\end{lemma}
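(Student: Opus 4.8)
The plan is to follow the scalar orbit $p_j:=g^{(j)}(u_0+\ep)$ on its way from $u_0$ up to $u_+$, splitting the trip into three regimes, each controlled by a different one of (G2)--(G5). I would first reduce to a single inequality: assumption (G1), together with $u_-+u_+=2u_0$, gives the antisymmetry $g(2u_0-p)=2u_0-g(p)$, and hence by an easy induction $g^{(n)}(2u_0-p)=2u_0-g^{(n)}(p)$; in particular $g^{(n)}(u_0-\ep)=2u_0-g^{(n)}(u_0+\ep)$, so the bound $g^{(n)}(u_0-\ep)\le u_-+\ep^k$ follows from $g^{(n)}(u_0+\ep)\ge u_+-\ep^k$. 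Two elementary facts will be used throughout: by (G5) $g$ is increasing and fixes $u_0$ and $u_+$, so $g$ maps $[u_0,u_+]$ into itself and the orbit never leaves $(u_0,u_+]$; and by (G3) $g$ is strictly concave on $(u_0,u_+)$ with $g(u_0)=u_0$, $g(u_+)=u_+$, so its graph lies above the diagonal there, i.e.\ $g(p)>p$ on $(u_0,u_+)$, whence $(p_j)$ is nondecreasing. Thus it is enough to exhibit a single index $n_0\le A(k)|\log\ep|$ with $p_{n_0}\ge u_+-\ep^k$ (if $u_0+\ep\ge u_+$ the claim is trivial, so I assume $u_0+\ep\in(u_0,u_+)$).

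\emph{Phase 1 --- escaping the unstable fixed point.} Since $g'(u_0)>1$ by (G2) and $g'$ is continuous near $u_0$, I would fix $\lambda\in(1,g'(u_0))$ and $\rho\in(0,u_+-u_0)$ with $g'\ge\lambda$ on $[u_0,u_0+\rho]$. While $p_j-u_0\le\rho$ one has $p_{j+1}-u_0=\int_{u_0}^{p_j}g'(s)\,ds\ge\lambda(p_j-u_0)$, so $p_j-u_0\ge\lambda^j\ep$ by induction; hence after $N_1\le 1+|\log\ep|/\log\lambda$ steps the orbit first exceeds $u_0+\rho$, and since it stays $\le u_+$ it then lies in $(u_0+\rho,u_+]$.

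\emph{Phases 2 and 3 --- crossing the bulk, then contracting to $u_+$.} On the compact set $[u_0+\rho,\,u_+-\delta_0]$ (empty if $\delta_0\ge u_+-u_0-\rho$, in which case I skip ahead) the continuous function $g(p)-p$ is strictly positive, hence $\ge\mu$ for some $\mu>0$; since the orbit cannot pass $u_+$, it gains at least $\mu$ per step until it exceeds $u_+-\delta_0$, which takes at most $N_2:=\lceil(u_+-u_0)/\mu\rceil$ further steps, a number not depending on $\ep$. Writing $\alpha_j:=u_+-p_{N_1+N_2+j}$, so $\alpha_0\le\delta_0$, assumption (G4) gives $\alpha_{j+1}=u_+-g(u_+-\alpha_j)\le(1-c_0)\alpha_j$ as long as $\alpha_j\le\delta_0$, a condition preserved by induction; thus $\alpha_j\le(1-c_0)^j\delta_0$, which is $\le\ep^k$ once $j\ge N_3:=\big(k|\log\ep|+(\log\delta_0)\vee 0\big)/|\log(1-c_0)|$. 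Taking $n_0=N_1+N_2+N_3$ and using $|\log\ep|\ge\log 3$ for $\ep\in(0,1/3]$ to absorb the additive constants, I obtain $n_0\le A(k)|\log\ep|$ for a finite $A(k)$, and then $p_n\ge p_{n_0}\ge u_+-\ep^k$ for all $n\ge n_0$ by monotonicity of the orbit.

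I do not expect any genuine analytic obstacle here: the work has all been front-loaded into the assumptions (G2)--(G5), which were arranged precisely to make this three-regime estimate go through. The steps that take the most care are bookkeeping ones --- verifying that the orbit really is monotone and never overshoots $u_+$, so that the three regimes are traversed in order and $\alpha_j\ge0$; handling the degenerate case in which $\delta_0$ is large enough that Phase 2 is vacuous; and combining the $O(1)$, $O(|\log\ep|)$ and $O(k|\log\ep|)$ contributions into one bound $A(k)|\log\ep|$ valid uniformly in $\ep\in(0,1/3]$.
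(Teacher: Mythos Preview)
Your proof is correct and follows essentially the same route as the paper's: push the orbit geometrically away from $u_0$ until it reaches $u_+-\delta_0$, then invoke (G4) for geometric contraction toward $u_+$. The only organisational difference is that the paper collapses your Phases~1 and~2 into a single step by observing that $k_1:=\inf_{x\in(0,\,u_+-u_0-\delta_0]}\frac{g(u_0+x)-(u_0+x)}{x}>0$ (the limit as $x\downarrow 0$ is $g'(u_0)-1>0$ by (G2), and positivity on the rest follows from $g(p)>p$ and compactness), yielding $g^{(m)}(u_0+\ep)-u_0\ge(1+k_1)^m\ep$ all the way to $u_+-\delta_0$; your explicit antisymmetry reduction for the second inequality is a detail the paper leaves implicit.
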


\begin{proof}
(G4) (i.e., \eqref{G4}) implies that if $\delta<\delta_*$ then $u_+-g(u_+-\delta) \le (1-c_0)\delta$. Iterating gives
$$
u_+-g^{(n)}(u_+-\delta)\leq (1-c_0)^n(u_+-\delta).
$$
That is, there is some constant $C_k$ such that if $\delta<\delta_*$ then for  $n\geq C_k|\log \ep|$ we have 
$$
g^{(n)}(u_+-\delta)\geq u_+-\ep^k.
$$
It remains to find an $M_\ep$, which will depend on $\ep$, so that $g^{(M_\ep)}(u_0+\ep)\geq u_+-\delta_*$. 

By \eqref{G2} we know $g'(u_0)>1$. Since $u_0$ and $u_+$ are two fixed points of $g$ and $g$ is strictly increasing, we have $g(p)>p$ for $p\in (u_0,u_+-\delta_*]$. It follows that 
$$
k_1\equiv \inf_{x\in (0,u_+-u_0-\delta_*]} \frac{g(u_0+x) - (u_0+x)}{x} > 0
$$
so for $x\in 
[u_0+\ep,u_+-\delta_*]$ we have $g(u_0+x) - u_0 \ge (1+k_1)x$. Hence for $m\in \mathbb{N}$ such that  $g^{(m)}(u_0+\ep) < u_+-\delta_*$ we have $g^{(m)}(u_0+\ep) \ge u_0+(1+k_1)^m\ep$. This implies we can take $M_\ep=B|\log\ep|$ where $B=1/\log(1+k_1)$. Taking $A(k) = B+C_k$ completes the proof.
\end{proof}

Since the branching rate $c^*\ep^{-2}$ is large when $\ep$ is small, then even for  a small $t$ the tree $\TT(\BB(t))$ should be have a lot of vertices. For $l\in\RR$, let $\TT^{reg}_l$ denote a ternary tree with depth $\lceil l \rceil$. For a time-labelled ternary tree $\TT$, we write $\TT\supseteq \TT^{reg}_l$ if $\TT^{reg}_l$ can be embedded in $\TT$ as a subtree. The next two results are Lemma 2.10 and 2.11 in \cite{Etheridge}. The proofs are exactly  the same so they are omitted. 

\begin{lemma}\label{lem2.9}
Let $k\in\mathbb{N}$ and let $A=A(k)$ be as in Lemma \ref{lemma2.8}. Then there exists $a_1=a_1(k)$ and $\ep_1=\ep_1(k)$ such that, for all $\ep\in(0,\ep_1)$ and $t\geq a_1\ep^2|\log \ep|$,
$$P^\ep[ \TT(\BB(t))\supseteq \TT^{reg}_{A(k)|\log \ep|}]\geq 1-\ep^k.$$
\end{lemma}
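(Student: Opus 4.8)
The plan is to reduce the statement to a standard branching-process fact: a continuous-time Galton–Watson tree in which every vertex splits into $3$ at rate $r=c^*\ep^{-2}$ (which grows like $\ep^{-2}$) will, with overwhelming probability, contain a full ternary subtree of depth $d=\lceil A(k)|\log\ep|\rceil$ once we wait a time $t$ that is a large multiple of $\ep^2|\log\ep|$. Since the branching rate is of order $\ep^{-2}$, the expected number of branching events along a single line before time $t$ is of order $t\ep^{-2}\gtrsim a_1|\log\ep|$, so heuristically the tree is ``tall'' well before time $t$; the content of the lemma is to turn this into a bound with error $\le\ep^k$ uniformly over the randomness.

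The key steps, in order, are: (1) Couple $\TT(\BB(t))$ from below by the tree $\TT'$ obtained by ignoring the spatial motion entirely (this is legitimate since whether $\TT^{reg}_{A(k)|\log\ep|}$ embeds in $\TT$ depends only on the combinatorial/temporal structure of the branching, not on positions). (2) Observe that embedding a depth-$d$ ternary tree fails only if, somewhere in the first $d$ generations, some vertex reaches time $t$ along one of its three children-lines before producing its next branch. Formally, run the tree generation by generation: a vertex born at time $s<t$ produces its next split after an $\mathrm{Exponential}(r)$ waiting time, and we need this to be $<t-s$ for all relevant vertices. (3) Bound the failure probability by a union bound over the at most $\sum_{j=0}^{d} 3^{j} = O(3^{d})$ vertices of the putative embedded tree. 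For each such vertex born at time $s$, $P(\text{no split before } t) = e^{-r(t-s)}$; along any root-to-leaf path the times add up, but the crude bound is that each of the $\le d$ splits needed along a given branch individually has exponential tail, so the probability a fixed branch of length $d$ fails to be realized by time $t$ is at most $P(\mathrm{Gamma}(d,r) > t)$, which by a Chernoff bound is $\le e^{-c t r}$ once $tr\ge 2d$, i.e. once $t\ge a_1\ep^2|\log\ep|$ with $a_1$ chosen large relative to $A(k)$. (4) Combine: the total failure probability is $\le 3^{d}\cdot e^{-ctr}$; with $d\le A(k)|\log\ep|+1$ and $tr\ge (a_1/c^*)\,|\log\ep|\cdot$ (constant), choosing $a_1=a_1(k)$ large enough makes the exponent $-ctr + d\log 3$ smaller than $-k|\log\ep|$, so the bound is $\le\ep^k$ for $\ep<\ep_1(k)$.

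The main obstacle — such as it is — is step (3): getting the constants to line up so that a single parameter $a_1(k)$ simultaneously beats the $3^{d}=\ep^{-O(1)}\cdot$(entropy) factor from the union bound and yields the $\ep^k$ error. This is the usual ``large deviations for a sum of exponentials versus a combinatorial explosion'' bookkeeping: one picks $a_1$ so that the Chernoff rate $ctr$ exceeds $(k + A(k)\log 3)|\log\ep| + O(1)$. Since $c$ depends only on the offspring number $3$ and $A(k)$ is already fixed by Lemma \ref{lemma2.8}, this is a routine (if slightly fiddly) choice of constants. In fact, because this is verbatim Lemma 2.9 of \cite{Etheridge} and nothing in our setup changes the branching mechanism of $\BB_t$ (only the spatial motion, which is irrelevant here), the cleanest route is simply to invoke that argument; the excerpt already announces that the proof is omitted for exactly this reason.
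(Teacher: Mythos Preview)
Your proposal is correct and matches the paper's treatment: the paper omits the proof entirely, stating that it is exactly Lemma~2.9 of \cite{Etheridge}, and your sketch (union bound over the $O(3^d)$ leaves of the putative embedded tree, combined with a Chernoff/Gamma tail bound for the sum of $d$ exponential$(c^*\ep^{-2})$ waiting times along each root-to-leaf path) is precisely the argument given there. You also correctly observe that the spatial motion is irrelevant, so nothing in the present setting changes the proof.
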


\begin{lemma}\label{lem2.10}
Let $k\in\mathbb{N}$, and let $a_1(k)$ as in Lemma \ref{lem2.9}. Then there exists $d_1(k), \ep_1(k)$ such that for all $\ep\in(0,\ep_1(k))$ and all $s\leq a_1\ep^2|\log \ep|$,
$$P^\ep_x[\exists i\in N(s): |B_i(s)-x|\geq d_1(k)\ep|\log \ep|] \leq \ep^k,$$
where $N(s)$ is the set of indices of particles in $\BB$ up to time $s$.
\end{lemma}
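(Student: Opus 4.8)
The plan is to combine a crude first-moment bound on the size of the branching tree with a Gaussian maximal inequality for each individual lineage, glued together by a union bound carried out conditionally on the branching structure. Let $\GG$ be the $\sigma$-algebra generated by the branching skeleton and the branch times of $\BB$ restricted to $[0,s]$. Then $N(s)$ is $\GG$-measurable and, conditionally on $\GG$, the trajectory $\{B_i(u)\}_{0\le u\le s}$ of each lineage $i$ is an (unconditioned) Brownian motion started at $x$, because the Brownian increments carried along a lineage are independent of the Poisson mechanism that produces the branch times; this independence is the one structural point in the argument that deserves a sentence of care. (Here and below $c,C$ denote positive constants that may change from line to line.)

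First I would estimate $E[\#N(s)]$. Writing $r\ep^{-2}$ for the per-particle branching rate of $\BB$ (each branch replacing one particle by $N_0+1$), the particle count is dominated by a Yule-type pure-birth process, so $E[\#N(s)]\le C\exp(c\ep^{-2}s)$ for constants $c,C$ depending only on $r$ and $N_0$. For $s\le a_1\ep^2|\log\ep|$ this is at most $C\exp(ca_1|\log\ep|)=C\ep^{-ca_1}$, which is only polynomial in $1/\ep$; this is exactly where the smallness $s=O(\ep^2|\log\ep|)$ is used.

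Second, for a fixed lineage $i$, the reflection principle together with a Gaussian tail bound gives, with $\lambda=d_1\ep|\log\ep|$,
$$
P\Big(\sup_{0\le u\le s}|B_i(u)-x|\ge\lambda \,\Big|\,\GG\Big)\le 4\exp\!\left(-\frac{\lambda^2}{Cs}\right)\le 4\,\ep^{\,d_1^2/(Ca_1)},
$$
again using $s\le a_1\ep^2|\log\ep|$, where $C$ absorbs the $O(1)$ speed of the Brownian motions. Since this bound is deterministic, summing it over the lineages (at most $\#N(s)$ of them, conditionally on $\GG$) and then taking expectations yields
$$
P\big(\exists\, i\in N(s):\ |B_i(s)-x|\ge d_1\ep|\log\ep|\big)\ \le\ E[\#N(s)]\cdot 4\,\ep^{\,d_1^2/(Ca_1)}\ \le\ 4C\,\ep^{\,d_1^2/(Ca_1)-ca_1}.
$$
Choosing $d_1=d_1(k)$ large enough that $d_1^2/(Ca_1)-ca_1\ge k+1$ makes the right-hand side at most $\ep^k$ for all $\ep<\ep_1(k)$, which is the claim; note $d_1$ is allowed to depend on $k$, as it must, since $a_1=a_1(k)$.

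The argument is essentially routine, and the only place that demands attention is the exponent bookkeeping in the last display: the particle count grows polynomially in $1/\ep$ because the branching rate blows up like $\ep^{-2}$, so one must verify that this growth is beaten by the polynomial decay of the one-lineage displacement probability. It is, because over the short time scale $s=O(\ep^2|\log\ep|)$ a Brownian motion typically moves only $O(\ep|\log\ep|^{1/2})$, so exceeding a large fixed multiple $d_1$ of $\ep|\log\ep|$ costs probability $\ep^{\Theta(d_1^2)}$, which can be pushed below any prescribed power of $\ep$ by enlarging $d_1$.
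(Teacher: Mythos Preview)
Your proof is correct and is exactly the standard argument: bound the expected number of lineages by a Yule estimate (polynomial in $1/\ep$ over the time scale $a_1\ep^2|\log\ep|$), bound each lineage's displacement by a Gaussian tail (giving $\ep^{\Theta(d_1^2)}$), and take a union bound. The paper omits the proof entirely, deferring to Lemma~2.10 of \cite{Etheridge}, whose argument is the same as yours.
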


\mn
While the proof of Lemma \ref{lemma2.8} is fresh on the reader's mind we will prove

\begin{lemma}\label{upden}
For a fixed $k\in\mathbb{N}$, there exists $\sigma_1(k)>0$ such that for $t\geq \sigma_1(k)\ep^2|\log \ep|$ and $x\in \RR$
$$
P^\ep_x[\VV_p(\WW(t))=1]\leq u_++\ep^k
$$
where $p: \RR^d \to [0,1]$ is the initial condition satisfying (G0).

\end{lemma}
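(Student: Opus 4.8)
The plan is to combine the fact that for $t\gtrsim\ep^2|\log\ep|$ the time-labelled tree $\TT(\WW(t))$ is deep (Lemma \ref{lem2.9}) with the observation that iterating $g$ contracts toward the stable fixed point $u_+$ ``from above'' at a geometric rate, exactly as in the proof of Lemma \ref{lemma2.8}. Two soft facts about the computing algorithm get used. First, if we write $G(p_1,\dots,p_{N_0+1})$ for the probability the algorithm returns $1$ at a branch point when the $N_0+1$ incoming lineages carry independent Bernoulli$(p_i)$ states (for the voter perturbations this also averages over the coalescence partition $\pi_v$), then attractiveness makes $G$ nondecreasing in each coordinate, and $G(p,\dots,p)=g(p)$; hence $G(p_1,\dots,p_{N_0+1})\le g(\max_i p_i)$. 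Second, since $g$ is strictly increasing ((G5)) with $g(u_+)=u_+$ and $g(p)<p$ on $(u_+,1)$, the interval $[u_+,1]$ is mapped into itself, and running the argument of Lemma \ref{lemma2.8} near $u_+$ (finitely many strict-decrease steps to enter a small neighbourhood of $u_+$, then geometric contraction by $1-c_0$ supplied by (G2) and (G4)) yields $A'(k)<\infty$ with $g^{(n)}(q)\le u_++\ep^{k}/2$ for every admissible starting value $q$ and every $n\ge A'(k)|\log\ep|$. When $g$ has a fixed point at $1$ one uses the restriction $p(x)\le 1-\delta$ from (G0) to keep the leaf values in the basin of $u_+$.

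Now fix $k'\ge k$, put $n_0=n_0(k)=\lceil A'(k)|\log\ep|\rceil$, and apply Lemma \ref{lem2.9} (its statement concerns only the branching structure, which is the same for $\WW$ and $\BB$), with its parameter chosen so that it furnishes a regular subtree of depth at least $n_0$ rooted at the root of $\TT(\WW(t))$: there is $\sigma_1(k)>0$ so that for $t\ge\sigma_1(k)\ep^2|\log\ep|$ the event $\GG=\{\TT(\WW(t))\supseteq\TT^{reg}_{n_0}\}$ has $P^\ep_x(\GG)\ge 1-\ep^{k'}$. On $\GG$, condition on $\TT(\WW(t))$ and on all particle trajectories; the leaves then carry independent Bernoulli states with parameters $p(\hat X^i_t)$ in the admissible range, the branch-point randomness is independent, so the conditional probability the root returns $1$ is obtained by propagating these parameters up the tree through $G$. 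Starting from the trivial bound $q$ (the right endpoint of the admissible range) at the bottom layer (depth $n_0$) of the embedded regular subtree and moving up one level at a time via $G(\cdots)\le g(\max)$, the value at the root of the subtree, which is the root of $\TT(\WW(t))$, is at most $g^{(n_0)}(q)\le u_++\ep^k/2$; moreover $g$ maps $[u_+,u_++\ep^k/2]$ into itself, so this bound is not destroyed by any additional levels hanging below the embedded subtree. Taking expectations over the trajectories and the tree,
$$
P^\ep_x[\VV_p(\WW(t))=1]\le P^\ep_x(\GG^c)+(u_++\ep^k/2)\le\ep^{k'}+u_++\ep^k/2\le u_++\ep^k
$$
once $\ep$ is small enough.

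The only step with real content is the iteration estimate in the first paragraph: showing that $O(|\log\ep|)$ applications of $g$ bring an arbitrary admissible value within $\ep^k/2$ of $u_+$, so that the logarithmically deep tree from Lemma \ref{lem2.9} is sufficient. This is a direct transcription of the estimates in Lemma \ref{lemma2.8}, now with $u_+$ in the role of the attracting fixed point and $\{p\ge u_+\}$ in the role of the relevant side of its basin, using (G2), (G3), (G4) and (G5) (together with the (G0) restriction on $p$ when $g$ fixes an endpoint). Everything else --- monotonicity of $G$, the union bound, and the invariance of $[u_+,u_++\ep^k/2]$ under $g$ --- is routine.
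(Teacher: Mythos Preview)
Your proof is correct and takes essentially the same approach as the paper's: bound the leaf parameters from above by the maximal admissible value, use that iterating $g$ from above $u_+$ contracts geometrically toward $u_+$, and invoke Lemma~\ref{lem2.9} to guarantee a tree of depth $O(|\log\ep|)$. The paper's version is terser---it simply bounds by $p\equiv 1$ and even writes $u_+=2/3$, so it is tailored to the sexual-reproduction case where $1$ is not a fixed point of $g$---while your explicit use of the (G0) restriction $p\le 1-\delta$ when $g(1)=1$ is precisely the extra care needed to make the argument go through for the nonlinear voter model, but the core idea is identical.
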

\mn
\textbf{Remark.} \textit{The same conclusion also holds for $P^\ep_x[\VV_p(\hat{\XX^\ep}(t))=1]$} following the same proof.

\begin{proof}
First we consider the case where 1 is not a fixed point of $g$. Since $u_+$ is a fixed point of $g$ and $g'(u_+)<1$ by (G2), it is easy to see $g(p)<p$ on $(u_+,1]$. It follows that 
$$k_2\equiv \inf_{x\in (0,1-u_+]} \frac{ (u_++x)-g(u_++x)}{x}\in (0,1),$$
which implies that if $\delta\in [0,1-u_+]$
$$
g(u_++\delta) - u_+  \le (1-k_2)\delta.
$$
Iterating as in the proof of Lemma \ref{lemma2.8} 
$$
g^{(n)}(u_++\delta)-u_+\leq (1-k_2)\left(g^{(n-1)}(u_++\delta)-u_+\right)\leq (1-k_2)^n \delta.
$$
By assumption (G0), since 1 is not a stable fixed point of $g$ we get the largest value by setting $p\equiv1$. In order to have $g^{(n)}(1)\leq u_++\ep^k$ we need 
$$
g^{(n)}(u_++(1-u_+))-u_+\leq (1-k_2)^n(1-u_+)\leq \ep^k.
$$
It is easy to see that there exists $C(k)>0$ such that the above inequality holds for $n\geq C(k)|\log \ep|$.
It follows from Lemma \ref{lem2.9} that there exists $\sigma_1(k)>0$ such that for $t\geq \sigma_1(k)\ep^2|\log \ep|$
$$P^\ep[ \TT(\WW(t))\supseteq \TT^{reg}_{C(k)|\log \ep|}]\geq 1-\ep^k.$$
Therefore, when $t\geq  \sigma_1\ep^2|\log \ep|$
$P^\ep_x[ \VV_p(\WW(t))=1]\leq u_++\ep^k+\ep^k=u_++2\ep^k.$

The second case where 1 is a fixed point of $g$ follows similarly. By assumption (G0) we can set $p\equiv 1-\varep$ for some arbitrarily small $\varep>0$. Modify the definition of $k_2$ to be 
$$k_2(\varep)\equiv \inf_{x\in (0,1-\varep-u_+]} \frac{ (u_++x)-g(u_++x)}{x}\in (0,1).$$
In order to have $g^{(n)}(1-\varep)\leq u_++\ep^k$ we need 
$$
g^{(n)}(u_++(1-\varep-u_+))-u_+\leq (1-k_2)^n(1-\varep-u_+)\leq \ep^k.
$$
The rest of the argument for the second case is the same.
\end{proof}

\subsubsection{The main result in one dimension}

We are now ready to prove  

\begin{theorem}\label{thm2.5}
Fix any $T^*\in(0,\infty)$. For all $k\in \mathbb{N}$ there exist $c_1(k)$ and $\ep_1(k)>0$ such that, for all $t\in [0,T^*]$ and all $\ep\in (0,\ep_1)$,
\begin{enumerate}
\item for $x\geq c_1(k)\ep|\log \ep|$, we have $P^\ep_x[\VV(\BB(t))=1]\geq u_+-\ep^k$,
\item for $x\leq -c_1(k)\ep|\log \ep|$, we have $P^\ep_x[\VV(\BB(t))=1]\leq u_-+\ep^k$,
\end{enumerate}
\end{theorem}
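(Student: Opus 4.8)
I would mirror the argument of \cite{Etheridge}.

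\textbf{Reductions.} The antisymmetry identity $P^t_z(\TT)=2u_0-P^t_{-z}(\TT)$, together with $2u_0=u_++u_-$, gives on averaging over the law of the time‑labelled tree that $P^\ep_z[\VV(\BB(t))=1]=u_++u_--P^\ep_{-z}[\VV(\BB(t))=1]$; hence conclusion (2) at $z\le-c_1\ep|\log\ep|$ is conclusion (1) at $-z\ge c_1\ep|\log\ep|$, and I only need to prove (1). By the monotonicity of $x\mapsto P^\ep_x[\VV(\BB(t))=1]$ it then suffices to treat $z=c_1\ep|\log\ep|$. I fix an auxiliary index $k'=k+1$, take $c_1=c_1(k):=d_1(k')+1$ with $d_1$ from Lemma \ref{lem2.10}, and will absorb errors of size $C\ep^{k'}$ into $\ep^k$ at the end (valid since $C\ep^{k'}\le\ep^k$ for $\ep$ small).

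\textbf{Short times.} For $t<a_1(k')\ep^2|\log\ep|$ (with $a_1$ from Lemma \ref{lem2.9}), Lemma \ref{lem2.10} shows that with probability $\ge 1-\ep^{k'}$ every lineage of $\BB$ up to time $t$ stays within $d_1(k')\ep|\log\ep|$ of $z$. On this event the leaves of $\TT(\BB(t))$, which sit at time $0$, all have position $\ge z-d_1(k')\ep|\log\ep|=(c_1-d_1(k'))\ep|\log\ep|>0$, so each leaf is independently set to $1$ with probability $u_+$ (since $p_0\equiv u_+$ on $[0,\infty)$). Since $u_+$ is a fixed point of $\phi(p)=r(g(p)-p)$ we have $g(u_+)=u_+$, and an induction up the tree (computations on subtrees with disjoint leaf sets being independent) shows the root output is Bernoulli$(u_+)$. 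Hence $P^\ep_z[\VV(\BB(t))=1]\ge(1-\ep^{k'})u_+\ge u_+-\ep^k$.

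\textbf{Main case $a_1(k')\ep^2|\log\ep|\le t\le T^*$.} Set $n=\lceil A(k')|\log\ep|\rceil$ with $A$ from Lemma \ref{lemma2.8}. By Lemma \ref{lem2.9} (with the embedded copy taken among the top generations), with probability $\ge 1-\ep^{k'}$ the first $n$ generations of $\TT(\BB(t))$ form a regular tree $\TT^{reg}_n$ all of whose vertices lie in $[t-a_1(k')\ep^2|\log\ep|,t]$; by Lemma \ref{lem2.10} on that window, with probability $\ge 1-\ep^{k'}$ every lineage there stays within $d_1(k')\ep|\log\ep|$ of $z$. Let $G$ be the intersection, so $P(G^c)\le 2\ep^{k'}$. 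On $G$ each leaf $v$ of $\TT^{reg}_n$ is an internal vertex of $\TT(\BB(t))$ at some time $t_v\in[t-a_1(k')\ep^2|\log\ep|,t]$ and position $X_v$ with $X_v\ge(c_1-d_1(k'))\ep|\log\ep|>0$; for the (noncoalescing) branching Brownian motion the outputs attached to distinct leaves $v$ are independent given the top of the tree, and the one at $v$ equals $1$ with probability $q_v:=P^\ep_{X_v}[\VV(\BB(t_v))=1]$. By Lemma \ref{1dbm} (averaged over trees) $q_v\ge u_-+(u_+-u_-)P_{X_v}(B_{t_v}\ge 0)$, and since $B$ runs at rate $2$, $P_{X_v}(B_{t_v}\ge 0)-\tfrac{1}{2}\ge(\Phi(1)-\tfrac{1}{2})\min\{X_v/\sqrt{2t_v},\,1\}$ where $\Phi$ is the standard normal distribution function; because $X_v\ge(c_1-d_1(k'))\ep|\log\ep|$, $t_v\le T^*$, and $|\log\ep|\to\infty$, this lower bound exceeds $\ep/(u_+-u_-)$ once $\ep$ is small, so $q_v\ge u_0+\ep$ for every leaf $v$ of $\TT^{reg}_n$. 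Coupling each leaf output above an independent Bernoulli$(u_0+\ep)$ variable and using that the $g$‑computation on $\TT^{reg}_n$ is monotone in its inputs (by attractiveness), the root output stochastically dominates the one obtained from i.i.d.\ Bernoulli$(u_0+\ep)$ leaves of $\TT^{reg}_n$, which is Bernoulli$(g^{(n)}(u_0+\ep))$. By Lemma \ref{lemma2.8} and $n\ge A(k')|\log\ep|$ this is $\ge u_+-\ep^{k'}$, so $P^\ep_z[\VV(\BB(t))=1]\ge(u_+-\ep^{k'})(1-2\ep^{k'})\ge u_+-\ep^k$ for $\ep$ small. This proves (1), hence the theorem.

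\textbf{Main obstacle.} The crux is the main case: getting the regular tree $\TT^{reg}_n$ inside a time window of length $O(\ep^2|\log\ep|)$ so that Lemma \ref{lem2.10} keeps all of its leaves within $d_1(k')\ep|\log\ep|$ of $z$ (in particular on the positive half‑line), and then upgrading ``$X_v>0$'' to the quantitative estimate ``$q_v\ge u_0+\ep$'' via Lemma \ref{1dbm} — which succeeds only because the boundary‑layer scale $c_1\ep|\log\ep|$ carries the extra factor $|\log\ep|$ that dominates the $O(\sqrt{T^*})$ produced by the Gaussian tail over an $O(1)$ time horizon. The rest (the monotone coupling, the iteration estimate for $g$, and the tree and displacement bounds) is supplied by Lemmas \ref{lemma2.8}, \ref{lem2.9} and \ref{lem2.10}.
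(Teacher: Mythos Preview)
Your proposal is correct and follows essentially the same approach as the paper's proof: both split into a short-time case handled by Lemma~\ref{lem2.10} (all leaves land on the positive half-line, so the computation outputs $u_+$), and a main case where one runs the BBM for a window of length $a_1\ep^2|\log\ep|$, uses Lemma~\ref{lem2.9} to extract $\TT^{reg}_{A(k)|\log\ep|}$, uses Lemma~\ref{1dbm} to lower-bound the input at each leaf of that regular tree by $u_0+\ep$, and then iterates $g$ via Lemma~\ref{lemma2.8}. The only cosmetic difference is that the paper freezes all particles at the fixed dual time $\delta_1=a_1\ep^2|\log\ep|$ and replaces their inputs by the constant $\phi^\ep=p_{t-\delta_1}(z_\ep)$ (with $z_\ep=O(\ep)$ calibrated so that $\phi^\ep\ge u_0+\ep$), whereas you apply the Markov property at the (random) leaf times $t_v$ of the embedded regular tree and use the surplus factor $|\log\ep|$ in $X_v\ge(c_1-d_1)\ep|\log\ep|$ to get $q_v\ge u_0+\ep$; this requires the ``for all $s$ in the window'' form of Lemma~\ref{lem2.10}, which is what that lemma actually delivers.
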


\mn\textit{Proof of Theorem \ref{thm2.5}}.
For all $\ep<1/2$, define $z_\ep$ implicitly by the relation 
\beq\label{zep}
P_0(B_{T^*}\geq -z_\ep)=1/2+(u_+-u_-)^{-1}\ep
\eeq
and note that $z_\ep\sim (u_+-u_-)^{-1}\ep\sqrt{2\pi T^*}$ as $\ep \to 0$. Let $\ep_1(k)<1/2$ be sufficiently small so that Lemma \ref{lem2.9} and \ref{lem2.10} hold for $\ep\in (0,\ep_1)$. Let $d_1(k)$ be given by Lemma \ref{lem2.10} and let $c_1(k)=2d_1(k)$ so that, for $\ep\in(0,\ep_1)$,
$$d_1(k)\ep|\log\ep|+z_\ep \leq c_1(k)\ep |\log \ep|.$$
Let $a_1(k)$ be given by Lemma \ref{lem2.9} and let $\delta_1=\delta_1(k,\ep)=a_1(k)\ep^2|\log \ep|$. 

Note that $g(u_+)=u_+$, which means if we start with initial condition $p(x)\equiv u_+$ then
\beq\label{Vphi}
P^\ep_z(\VV_p(\BB(t))=1)=u_+ \quad \text{ for all $t>0, z\in \RR$}.
\eeq
If $t\in (0,\delta_1)$ and $z\geq c_1\ep|\log \ep|$, then Lemma \ref{lem2.10} and \eqref{Vphi} gives
\begin{align*}
P^\ep_z(\VV(\BB(t))=0)&\leq P^\ep_z( \exists i \in N(t) \text{ such that }|B_i(t)-z|\geq d_1\ep|\log \ep|)+P^\ep_z(\VV_p(\BB(t))=0)\\
&\leq 1-u_++\ep^k.
\end{align*}
We now suppose that $t\in [\delta_1,T^*]$ and $z\geq c_1\ep|\log \ep|$, and define
$$p_{t-\delta_1}(z)=P^\ep_z(\VV(\BB(t-\delta_1))=1),$$
and let  $\psi^\ep\equiv p_{t-\delta_1}(z_\ep).$
Write $\{\BB(\delta_1)>z_\ep\}$ for the event $B_i(\delta_1)>z_\ep$ for all $i\in N(\delta_1)$. Then 
\begin{align*}
P^\ep_z(\VV(\BB(t))=1)&=P^\ep_z(\VV_{p_{t-\delta_1}}(\BB(\delta_1))=1)\\
&\geq P^\ep_z\left(\{\VV_{\psi^\ep}(\BB(\delta_1))=1\} \cap \{\BB(\delta_1)>z_\ep\}\right)\\
&\geq  P^\ep_z(\VV_{\psi^\ep}(\BB(\delta_1))=1)-\ep^k
\end{align*}
By definition of $z_\ep$ in \eqref{zep} and $t-\delta_1<T^*$,
\begin{align*}
\psi^\ep=P^\ep_{z_\ep}(\VV(\BB(t-\delta_1))=1)&\geq u_+P_{z_\ep}(B_{t-\delta_1}\geq 0)+u_-P_{z_\ep}(B_{t-\delta_1}< 0)\\
&=u_+(1/2+(u_+-u_-)^{-1}\ep)+u_-(1/2-(u_+-u_-)^{-1}\ep)=u_0+\ep.
\end{align*}
It follows from Lemma \ref{lemma2.8} and \ref{lem2.9} that
\begin{align*}
P^\ep_z(\VV_{\psi^\ep}(\BB(\delta_1))=1)&\geq g^{(A(k)|\log \ep|)}(u_0+\ep)P^\ep\left( \TT(\BB(t))\supseteq \TT^{reg}_{A(k)|\log \ep|}\right)\\
&\geq (u_+-\ep^k)(1-\ep^k)\geq u_+-2\ep^k
\end{align*}
Therefore,
$P^\ep_z(\VV(\BB(t))=1)\geq u_+-3\ep^k.$
\qed

\subsubsection{Slope of the interface}
To prepare the proof of Theorem \ref{main}, i.e., the extension of Theorem \ref{thm2.5} to higher dimensions, we state the following result on the ``slope" of the interface.
\begin{prop}\label{prop2.11}
Suppose $x\geq 0$ and $\eta>0$. Then for any time-labelled regular tree $\TT$ with $N_0+1$ offspring and any time $t$,
$$P^t_x(\TT)-P^t_{x-\eta}(\TT)\geq P^t_{x+\eta}(\TT)-P^t_x(\TT).$$
\end{prop}

\begin{proof}
The proof is essentially the same as that of Proposition 2.11 in \cite{Etheridge}. 
We prove the result by induction on the number of branching events in $\TT$. We begin by noting that for a time-labelled tree $\TT_0$ with a root and a single leaf, we easily get 
$$
P^t_x(\TT_0)-P^t_{x-\eta}(\TT_0)=\int_{x-\eta}^x \phi_{0,t}(u)\; du\geq \int_{x}^{x+\eta} \phi_{0,t}(u) \;du= P^t_{x+\eta}(\TT_0)-P^t_x(\TT_0)
$$
where $\phi_{\mu,\sigma^2}$ is the density function of a $N(\mu,\sigma^2)$ random variable. To do the induction step let $\tau$ be the first branching time and let $\TT_1,\dots,\TT_{N_0+1}$ be the trees of the offspring of that branching. We have
\begin{align*}
&(P^t_x(\TT)-P^t_{x-\eta}(\TT))-( P^t_{x+\eta}(\TT)-P^t_x(\TT))\\
=&\left(E_x[g(P^{t-\tau}_{B_\tau}(\TT*))]-E_{x-\eta}[g(P^{t-\tau}_{B_\tau}(\TT*))]\right)-\left(E_{x+\eta}[g(P^{t-\tau}_{B_\tau}(\TT*))]-E_{x}[g(P^{t-\tau}_{B_\tau}(\TT*))]\right).
\end{align*}
 If we let $\rho(x) = g(P^{t-\tau}_{x}(\TT*))$ then the above is
\begin{align*}
&- \int_{-\infty}^\infty g(\rho(y+\eta)) - 2 g(\rho(y)) + g(\rho(y-\eta)) \phi_{x,\tau}(y)\; dy\\
=-&\int_0^\infty g(\rho(y+\eta)) - 2 g(\rho(y)) + g(\rho(y-\eta))
(\phi_{x,\tau}(y)-\phi_{x,\tau}(-y))\; dy
\end{align*}
Since $x\geq 0$, we have $\phi_{x,\tau}(y)-\phi_{x,\tau}(-y)\geq 0$ for $y\ge 0$ so it is enough to show \eqref{G5}, i.e.,
$$
g(\rho(y+\eta)) - 2 g(\rho(y)) + g(\rho(y-\eta)) \le 0.
$$
By the induction assumption $\rho(y) - \rho(y-\eta) \ge \rho(y+\eta) - \rho(y)\equiv h$. Let $p= \rho(y)$.
$$
g(\rho(y+\eta)) -  g(\rho(y)) =g(p+h) - g(p) \le g(p) - g(p-h) \le g(\rho(y)) - g(\rho(y-\eta)) 
$$
by monotonicity of $g$, which completes the proof.
\end{proof}

Exploiting the ``concavity" in Proposition \ref{prop2.11} gives a lower bound on the ``slope" of the interface.

\begin{corollary}\label{cor2.12}
Fix any $T^*\in (0,\infty)$. Suppose that for some $t\in [0, T^*]$ and $z\in \RR$,
\beq\label{2.27}
\big|P^{\ep}_z[\VV(\BB(t))=1]-u_0\big|\leq (u_+-u_0) - \delta_0,
\eeq
Take $\ep_1(1)$ and $c_1(1)$ from Theorem \ref{thm2.5} and $\ep<\min(\ep_1(1),\delta_0/2)$, and let $w\in\RR$ with $|z-w|\leq c_1(1)\ep|\log \ep|$. Then
$$
|P^{\ep}_z[\VV(\BB(t))=1]-P^{\ep}_w[\VV(\BB(t))=1]|
\geq \frac{\delta_0|z-w|}{4c_1(1)\ep|\log \ep|}.
$$
\end{corollary}

\begin{proof}
Consider first the case $0\leq z<w$.  By Theorem \ref{thm2.5} and \eqref{2.27} we have for small $\ep$
$$
P^\ep_{c_1(1)\ep|\log \ep|}[\VV(\BB(t))=1]-P^\ep_{z}[\VV(\BB(t))=1]
\geq \frac{\delta_0}{2}.
$$
Write $\eta=w-z$. 
Proposition \ref{prop2.11} implies that $P^t_{(j+1)\eta+z}-P^t_{j\eta+z}\leq P^t_w-P^t_z$ for $j\in\mathbb{N}$. Let $n_0=\lceil \eta^{-1}(c_1(1)\ep|\log \ep|-z)\rceil$. Then 
$$
P^t_{c_1(1)\ep|\log \ep|}-P^t_z\leq \sum_{j=0}^{n_0-1} P_{(j+1)\eta+z}^t-P_{j\eta+z}^t\leq n_0( P^t_w-P^t_z).
$$
That is,
$$
P^t_w-P^t_z\geq \frac{P^t_{c_1(1)\ep|\log \ep|}-P^t_z}{n_0}
\geq \frac{\delta_0|z-w|}{2(c_1(1)\ep|\log \ep|+|z-w|) }
\geq \frac{\delta_0|z-w|}{4c_1(1)\ep|\log \ep|}.
$$
\end{proof}

\subsection{BBM in higher dimensions}

\subsubsection{Properties of motion by mean curvature}

A key fact in the proof in Etheridge et al \cite{Etheridge} is a coupling between a one dimensional Brownian motion $B_s$ and $d(W_s, t-s)$, the signed distance from a d-dimensional Brownian motion $W_s$ to the interface $\Gamma_{t-s}$. To prepare for the coupling we will state 
some regularity properties of the mean curvature flow, which are given in Section 2.3 of \cite{Etheridge} and are derived based on assumptions (C1)-(C3). Recall that $d(x,t)$ is the signed distance from $x$ to the mean curvature flow $\Gamma_t$.  
\begin{enumerate}
\item There exists $\kappa_0>0$ such that for all $t\in [0,T^*]$ and $x\in \{t: |d(y,t)|\leq \kappa_0\}$ we have 
\beq
|\nabla d(x,t)|=1.
\eeq
Moreover, $d$ is a $C^{\alpha,\alpha/2}$ function in $\{(x,t): |d(x,t)|\leq \kappa_0, t\leq T^*\}$, where $\alpha>3$ as in (C1).
\item Viewing $\textbf{n}=\nabla d$ as the positive normal direction, for $x\in \Gamma_t$, the normal velocity of $\Gamma_t$ at $x$ is $-\partial_t d(x,t)$, and the curvature of $\Gamma_t$ at $x$ is $-\Delta d(x,t)$.
\item There exists $\kappa_0>0$ such that for all $t\in[0,T^*]$ and $x$ such that $|d(x,t)|\leq \kappa_0$,
\beq\label{2.33}
\bigg|\nabla \left(\partial_t d(x,t)-\Delta d(x,t)\right)\bigg|\leq \kappa_0.
\eeq
\item There exists $v_0,V_0>0$ such that for all $t\in [T^*-v_0]$ and all $s\in[t,t+v_0]$,
\beq\label{2.34}
|d(x,t)-d(x,s)|\leq V_0(s-t).
\eeq
\end{enumerate}

We state Proposition 2.13 in \cite{Etheridge}:
\begin{prop}\label{P2.13}
Let $(W_s)_{s\geq 0}$ denote a $d$-dimensional Brownian motion started at $x\in \RR^d$. Suppose that $t\leq T^*$, $\beta \leq \kappa_0$ and let 
$$T_\beta=\inf (\{ s\in[0,t): |d(W_s,t-s)|\geq \beta\} \cup \{t\}).$$
Then we can couple $(W_s)_{s\geq 0}$ with a one-dimensional Brownian motion $(B_s)_{s\geq 0}$ started from $z=d(x,t)$ in such a way that for $s\leq T_\beta$,
$$B_s-\kappa_0\beta s\leq d(W_s,t-s)\leq B_s+\kappa_0\beta s.$$
\end{prop}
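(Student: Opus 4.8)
\emph{Proof proposal (following \cite{Etheridge}).} The plan is to identify $d(W_s,t-s)$ as a one-dimensional semimartingale by It\^o's formula and then read off the comparison with a genuine Brownian motion. Write $Z_s=d(W_s,t-s)$ and let $\mathcal{L}$ denote the generator of $W$. For $s\le T_\beta$ the point $(W_s,t-s)$ stays inside the tube $\{(y,r):|d(y,r)|\le\beta,\ r\le T^*\}\subseteq\{|d|\le\kappa_0\}$, on which, by the regularity statements preceding the proposition, $d$ is $C^{\alpha,\alpha/2}$ for some $\alpha>3$; hence It\^o's formula applies up to $T_\beta$ and gives
$$
dZ_s=\big(\mathcal{L}d-\partial_t d\big)(W_s,t-s)\,ds+\nabla d(W_s,t-s)\cdot dW_s ,
$$
the sign on $\partial_t d$ coming from differentiating the time argument $t-s$. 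Since $|\nabla d|=1$ on the tube by property (1), the local martingale $M_s:=\int_0^s\nabla d(W_r,t-r)\cdot dW_r$ satisfies $\langle M\rangle_s=s$ for $s\le T_\beta$.

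The next step is to produce the coupled one-dimensional Brownian motion. On a possibly enlarged probability space, pick a standard one-dimensional Brownian motion $\tilde B$ independent of $(W_s)$, and set $B_s:=z+M_{s\wedge T_\beta}+(\tilde B_s-\tilde B_{s\wedge T_\beta})$ with $z=d(x,t)$. Then $B$ is a continuous martingale started at $z$ with $\langle B\rangle_s=s$ for every $s$ (the two pieces are orthogonal and their brackets are $s\wedge T_\beta$ and $(s-T_\beta)^+$), so by L\'evy's characterization $B$ is a standard Brownian motion, and $B_s=z+M_s$ for $s\le T_\beta$. Since $Z_0=z$ and the martingale parts cancel, for $s\le T_\beta$ we obtain
$$
Z_s-B_s=\int_0^s a(W_r,t-r)\,dr,\qquad a(y,r):=\big(\mathcal{L}d-\partial_t d\big)(y,r).
$$

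It remains to bound the drift $a$ in the tube. By property (2), the fact that $\Gamma_r$ evolves by motion by mean curvature is precisely the statement that the normal velocity and the curvature agree, i.e.\ $a(y,r)=0$ whenever $d(y,r)=0$. By property (3), together with the $C^\alpha$ ($\alpha>3$) spatial regularity of $d$ (which bounds the third spatial derivatives that appear after taking a gradient), $|\nabla_y a(y,r)|\le\kappa_0$ on the tube, shrinking $\kappa_0$ / absorbing constants if necessary. Integrating $\nabla_y a$ along the segment joining $y$ to its nearest point on $\Gamma_r$ — whose length is exactly $|d(y,r)|$ because $|\nabla d|=1$, and which stays in the tube — gives $|a(y,r)|\le\kappa_0\,|d(y,r)|$. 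Hence for $r\le T_\beta$ we have $|d(W_r,t-r)|\le\beta$, so $|a(W_r,t-r)|\le\kappa_0\beta$, and therefore $|Z_s-B_s|\le\kappa_0\beta s$ for all $s\le T_\beta$, which is the asserted two-sided bound.

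I do not expect a serious obstacle here: the argument is a routine application of It\^o's formula and the curvature-flow regularity estimates quoted above, and the error $\kappa_0\beta s$ is just the product of the tube width $\beta$, the Lipschitz constant of the drift $a$ in the normal direction, and the elapsed time. The only step needing a little care is the coupling construction in the second paragraph: the It\^o martingale $M$ is only guaranteed to behave like a Brownian motion up to the random time $T_\beta$, so to obtain a Brownian motion $B$ defined for all $s\ge0$ and adapted to a filtration carrying both $W$ and $B$, one must splice in an independent increment after $T_\beta$ and invoke L\'evy's characterization on the enlarged space.
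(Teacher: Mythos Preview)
The paper does not give its own proof of this proposition; it simply quotes it as Proposition~2.13 of \cite{Etheridge} and uses it as a black box. Your sketch is precisely the argument in \cite{Etheridge}: apply It\^o's formula to $Z_s=d(W_s,t-s)$ inside the tube $\{|d|\le\beta\}$, use $|\nabla d|=1$ to identify the martingale part (via L\'evy's characterization, extended past $T_\beta$ by an independent increment) with a one-dimensional Brownian motion started at $d(x,t)$, and bound the drift $a=\mathcal{L}d-\partial_t d$ by $\kappa_0\beta$ using that $a$ vanishes on $\Gamma_{t-s}$ (the mean-curvature-flow equation) together with the gradient bound $|\nabla a|\le\kappa_0$ from property~(3). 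This is correct and matches the cited source.

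One small point to keep consistent: the cancellation $a=0$ on the interface requires that the generator of $W$ match the normalization of the flow, i.e.\ $\mathcal{L}=\Delta$ so that $a=\Delta d-\partial_t d$ is exactly the quantity in property~(3); equivalently, the $d$-dimensional Brownian motion here is run at rate~2, as in \cite{Etheridge}. With a standard Brownian motion one would instead get $a=\tfrac12\Delta d-\partial_t d$, which does not vanish on $\Gamma$ and would leave an uncontrolled curvature term. Your phrase ``absorbing constants if necessary'' does not quite cover this, so it is worth stating the convention explicitly.
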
 

\noindent

By Lemma \ref{independence} we can establish the results for $\hat{\XX^\ep}$, which will also hold for $\XX^\ep$ with high probability. Let $W_t$ denote a Brownian motion in $\RR^d$ while $\hat{X}^\ep_t$ denote a random walk on $\eta\ZZ^d$ with jump rate $\eta^{-2}/2$ to each neighboring site.

\subsubsection{Generation of the interface}\label{genint}
The following proposition is very similar to Proposition 2.15 in \cite{Etheridge}. The major difference is that we work with the rescaled dual process $\XX^\ep_t$ and its comparison process $\hat \XX^\ep_t$ instead of the branching Brownian motion $\WW_t$ in $\RR^d$. 
\begin{prop}\label{215}
Let $k\in \mathbb{N}$ and $\sigma_1(k)$ be defined as in Lemma \ref{upden}. Then there exist $\ep_d(k)$, $a_d(k)$, $b_d(k)>0$ such that for all $\ep\in(0,\ep_d)$, if we set 
\begin{align*}
\delta_d(k,\ep)&:=\max\{a_d(k),\sigma_1(k)\}\ep^2|\log \ep| \\
\delta'_d(k,\ep)&:=(\max\{a_d(k),\sigma_1(k)\}+k+1)\ep^2|\log \ep|,
\end{align*}
then for $t\in[\delta_d,\delta'_d]$,
\begin{enumerate}
\item for $x$ such that $d(x,t)\geq b_d\ep|\log \ep|$, we have $P^\ep_x( \VV_p(\hat{\XX^\ep}(t))=1)\geq u_+- \ep^k$;
\item for $x$ such that $d(x,t)\leq -b_d\ep|\log \ep|$, we have $P^\ep_x( \VV_p(\hat{\XX^\ep}(t))=1)\leq u_-+\ep^k$.
\end{enumerate}
\end{prop}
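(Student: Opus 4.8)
\textit{Proof plan.} The argument will follow that of Proposition 2.15 in \cite{Etheridge}, the only differences being that the branching Brownian motion is replaced by the branching random walk $\hat{\XX^\ep}$ and that the step initial condition is replaced by a general $p$ via the regularity conditions (C1)--(C3). Fix $x$ with $d(x,t)\ge b_d\ep|\log\ep|$; the case $d(x,t)\le -b_d\ep|\log\ep|$ is symmetric. The idea is that on the time scale $\delta_d\asymp\ep^2|\log\ep|$ the dual tree $\TT(\hat{\XX^\ep}(t))$ already contains a deep regular $(N_0+1)$-ary subtree, while the particles have moved only $O(\ep|\log\ep|)$, so all leaves at time $0$ lie strictly on the ``outside'' of $\Gamma$ and carry $p$-values bounded away from $u_0$; the iteration of $g$ then pushes the output to $u_+$.

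\emph{Step 1 (localization, $\Gamma_t\to\Gamma_0$).} Since $t\le\delta'_d = C(k)\ep^2|\log\ep|\le v_0$ for $\ep$ small, \eqref{2.34} gives $d(x,0)\ge d(x,t)-V_0t\ge \tfrac12 b_d\ep|\log\ep|$ for $\ep$ small. Next, combining the single-lineage Brownian coupling of Lemma \ref{err}/\ref{errv} with Lemma \ref{lem2.10} (applied with its constant enlarged from $a_1$ to $C(k)$ to cover the time horizon $\delta'_d$), and bounding the number of particles in $\hat{\XX^\ep}$ up to time $t$ by $\eta^{-1/5}$ as in the proof of Lemma \ref{independence} so that a union bound over lineages is harmless, we obtain a good event $G_{\mathrm{mv}}$ with $P(G_{\mathrm{mv}})\ge 1-\ep^k$ on which every particle of $\hat{\XX^\ep}$ up to time $t$ is within $d_1(k)\ep|\log\ep|$ of $x$. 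Choosing $b_d\ge 2(d_1(k)+1)$, on $G_{\mathrm{mv}}$ every leaf $y$ of $\hat{\XX^\ep}$ has signed distance $d(y,0)\ge \ep|\log\ep|>0$, hence lies outside $\Gamma$; by (C2) and (C3), and since $\ep|\log\ep|<r$ for $\ep$ small, $p(y)\ge u_0+\gamma\ep|\log\ep|=:c^\ep$, and $c^\ep\in(u_0+\ep,u_+)$ for $\ep$ small.

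\emph{Step 2 (replace $p$ by $c^\ep$, then iterate $g$).} Because the computing algorithm is monotone in the leaf states (attractiveness; (G5)), on $G_{\mathrm{mv}}$ one may couple so that $\VV_p(\hat{\XX^\ep}(t))\ge \VV_{c^\ep}(\hat{\XX^\ep}(t))$ pathwise, whence $P^\ep_x(\VV_p(\hat{\XX^\ep}(t))=1)\ge P^\ep_x(\VV_{c^\ep}(\hat{\XX^\ep}(t))=1)-\ep^k$. For the constant initial condition $c^\ep$, set $q(v)=P^\ep(\text{state at }v=1\mid \TT(\hat{\XX^\ep}(t))=\TT)$. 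Since the subtrees below the $N_0+1$ children of any vertex are independent (this is exactly why $\hat{\XX^\ep}$ was built to have tree structure), and the multi-argument version of the majority/$a_k$-rule is symmetric, coordinatewise increasing, and agrees with $g$ on the diagonal, an induction on depth using $g^{(n)}(c^\ep)\uparrow u_+$ gives $q(\mathrm{root})\ge g^{(m(\TT))}(c^\ep)$, where $m(\TT)$ is the minimal leaf depth of $\TT$. Now $\{\TT\supseteq\TT^{reg}_{A(k)|\log\ep|}\}$ is precisely $\{m(\TT)\ge A(k)|\log\ep|\}$, and by Lemma \ref{lem2.9} (whose hypothesis holds since $\delta_d\ge a_d(k)\ep^2|\log\ep|$ with $a_d\ge a_1$) this event has probability $\ge 1-\ep^k$; on it, Lemma \ref{lemma2.8} gives $q(\mathrm{root})\ge g^{(A(k)|\log\ep|)}(c^\ep)\ge g^{(A(k)|\log\ep|)}(u_0+\ep)\ge u_+-\ep^k$. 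Integrating, $P^\ep_x(\VV_{c^\ep}(\hat{\XX^\ep}(t))=1)\ge(u_+-\ep^k)(1-\ep^k)$, so $P^\ep_x(\VV_p(\hat{\XX^\ep}(t))=1)\ge u_+-3\ep^k$, and relabelling $k$ proves (1). Part (2) is the mirror image: on $G_{\mathrm{mv}}$ each leaf satisfies $p(y)\le u_0-\gamma\ep|\log\ep|\le u_0-\ep$, so $\VV_p\le\VV_{u_0-\gamma\ep|\log\ep|}$, the recursion yields $q(\mathrm{root})\le g^{(m(\TT))}(u_0-\ep)$, and Lemma \ref{lemma2.8} brings this within $\ep^k$ of $u_-$.

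\emph{Main obstacle.} The routine bookkeeping (passing from $\Gamma_t$ to $\Gamma_0$, tracking the quantitative margin $\ep|\log\ep|$ so it feeds Lemma \ref{lemma2.8}, and the monotone coupling) and the iteration are essentially as in \cite{Etheridge}. The genuinely new point is Step 1: upgrading Lemma \ref{lem2.10}, stated for branching Brownian motion on scale $a_1\ep^2|\log\ep|$, to the branching random walk $\hat{\XX^\ep}$ on the slightly longer scale $\delta'_d$, while simultaneously controlling the particle count (and, for the voter-model examples, the coalescence) so that the union bound over lineages costs only $O(\ep^k)$. This requires combining the Brownian coupling Lemmas \ref{err}/\ref{errv} with the particle-count estimate behind Lemma \ref{independence}, and is where most of the care is needed.
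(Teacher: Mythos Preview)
Your proposal is correct and follows essentially the same route as the paper: localize all leaves of $\hat{\XX}^\ep$ within $O(\ep|\log\ep|)$ of $x$ via a Lemma~\ref{lem2.10}-type bound transferred from Brownian motion through Lemma~\ref{err}/\ref{errv}, use \eqref{2.34} and (C2)--(C3) to get $p(\text{leaf})\ge u_0+\ep$, and then invoke Lemmas~\ref{lemma2.8} and~\ref{lem2.9} exactly as in the proof of Theorem~\ref{thm2.5}. The paper is terser about the union bound over lineages (it simply asserts the Lemma~\ref{lem2.10} conclusion holds for the $W_i$ on $[\delta_d,\delta'_d]$ and then applies Lemma~\ref{err} per particle), whereas you make the particle-count control explicit; but this is a matter of exposition, not of strategy, and your ``main obstacle'' is less of an obstacle than you suggest.
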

\begin{proof}
For fixed $k\in \mathbb{N}$ and $A(k)$ specified as in Lemma \ref{lemma2.8}, it follows from Lemma \ref{lem2.9} that there exists $a_d(k), \ep_d(k)>0$ such that for all $\ep\in(0,\ep_d)$ and $t\geq a_d\ep^2|\log \ep|$.
$$
P^\ep [\TT(\hat{\XX}^\ep(t))\supseteq \TT^{reg}_{A(k)|\log \ep|}]\geq 1-\ep^k.
$$
It follows from the same argument as in Lemma \ref{lem2.10} that for $t\in[\delta_d,\delta'_d]$ there exists $b'_d(k),\ep_d(k)$ such that for all $\ep\in(0,\ep_d)$,
$$
P^\ep_x[\exists i\in N(t): |W_i(t)-x|\geq b'_d(k)\ep|\log \ep|] \leq \ep^k.
$$
By (2.34) in \cite{Etheridge} there exists $v_0,V_0>0$ such that for $t\leq v_0$, and any $x\in \RR^d$ we have $|d(x,0)-d(x,t)|\leq V_0t$. We can choose $\ep_d$ sufficiently small so that $\delta'_d\leq v_0$. Thus if $d(x,t)\geq 2b'_d\ep|\log \ep|$ and $|W_i(t)-x|\leq b'_d\ep|\log \ep|$ then
\begin{align*}
d(W_i(t),0)&\geq d(x,t)-|d(x,t)-d(W_i(t),t)|-|d(W_i(t),t)-d(W_i(t),0)|\\
&\geq 2b'_d\ep|\log \ep|-b'_d\ep|\log \ep|-V_0\delta'_d\geq \frac{2}{3}b'_d\ep|\log \ep|.
\end{align*}
It follows from Lemma \ref{err} that 
$$P( |W_i(t)-\hat{X}^\ep_i(t)|\geq \ep \text{ for some }t\leq \delta'_d)\leq \ep^{2k}.$$
The triangle inequality then implies that with probability at least $1-\ep^{2k}$
\begin{align*}
d(\hat{X}^\ep_i(t),0)&\geq d(W_i(t),0)-|\hat{X}^\ep_i(t)-W_i|\geq \frac{2}{3}b'_d\ep|\log \ep|-\ep\geq \frac{1}{2}b'_d\ep|\log \ep|.
\end{align*}

Applying (C2) and (C3),
$$p(\hat{X}^\ep_i(t))\geq u_0+\gamma(\frac{1}{2}b'_d\ep|\log \ep|\wedge r)\geq u_0+\ep.$$
For $x$ such that $d(x,t)\geq 2b'_d\ep|\log \ep|$ and $t\in[\delta_d,\delta'_d]$ it follows exactly from the proof of Theorem \ref{thm2.5} that
$$P^\ep_x[\VV_p(\hat{\XX}^\ep(t))=1]\geq u_+-3\ep^k.$$
Taking $b_d=2b'_d$ completes the proof.
\end{proof}

\subsubsection{Propagation of the interface}

In the Section \ref{genint} we established the existence of an interface develops for a short time interval $[\delta_d,\delta'_d]$. In this section we will show that the interface continue to exist for much longer.
The key to proving Theorem \ref{main} is the following proposition, which is an analogue of Proposition 2.17 in \cite{Etheridge}. To make things easier to write we define $\gamma(t)=K_1e^{K_2t}$ and introduce
$$
z^{\pm}_0 = d(x,t)\pm K_1e^{K_2t} \ep|\log\ep|
$$
which are two points in $\RR$. They depend on $x$ and $t$ but we do not record the dependence in notation.

\begin{prop}\label{2.16}
Let $l\in \mathbb{N}$ with $l\geq 4$. Define $\delta_d(l)$ as in Proposition \ref{215} and $C_1$ as in Lemma \ref{217}.There exists $K_1(l), K_2(l)>0$ and $\ep_d(l,K_1,K_2)>0$ so that for all $\ep\in(0,\ep_d)$ and $t\in[\delta_d(l), T^*]$ we have
\beq\label{2.39}
\sup_{x\in\RR^d} \left( P^\ep_x[\VV_p(\hat{\XX}^\ep(t))=1]
-P^\ep_{z^+_0}[\VV(\BB(t))=1]\right)\leq C_1\ep^{l}
\eeq
\beq\label{2.40}
\sup_{x\in\RR^d} \left( P^\ep_x[\VV_p(\hat{\XX}^\ep(t))=0]
-P^\ep_{z^-_0}[\VV(\BB(t))=0]\right)\leq C_1\ep^{l}
\eeq
\end{prop}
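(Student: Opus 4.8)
The plan is to prove \eqref{2.39} and \eqref{2.40} together by induction over time, advancing in steps of length $s=s(\ep)$ of order $\ep^2|\log\ep|$, with Proposition \ref{215} as the base case on the initial window $[\delta_d,\delta'_d]$. It is this compounding, one step at a time, that forces the margin $\gamma(t)\ep|\log\ep|$ to grow geometrically and hence dictates the shape $\gamma(t)=K_1e^{K_2t}$: at each step the margin must grow by a factor $e^{K_2s}\approx 1+K_2s$ so as to absorb the per-step slack coming from the motion of the mean curvature flow and from the Brownian couplings. By the $0\leftrightarrow 1$, $u_-\leftrightarrow u_+$, $p\leftrightarrow 1-p$ antisymmetry of $\VV$ recorded in Section \ref{bbmd1}, it suffices to treat \eqref{2.39}.

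For the inductive step I would fix $x\in\RR^d$ and a time $t$ with $t-s$ in the range already handled, assume \eqref{2.39}--\eqref{2.40} at time $t-s$ for every starting point, and run $\hat\XX^\ep$ from $(x,t)$ over dual time $[0,s]$. On an event of probability at least $1-\ep^l$ one has simultaneously: (i) an embedded regular ternary subtree of depth $A(l)|\log\ep|$ rooted at the true root, by the $\hat\XX^\ep$-analogue of Lemma \ref{lem2.9} established inside the proof of Proposition \ref{215}; (ii) every lineage displaced by at most $O(\ep|\log\ep|)$, by the $\hat\XX^\ep$-analogue of Lemma \ref{lem2.10}; (iii) each lineage $\hat{X}^\ep_i$ within $\ep$ of a Brownian motion $W_i$ in $\RR^d$, by Lemma \ref{err}; and (iv) each $W_i$ coupled through Proposition \ref{P2.13}, with $\beta=\beta(\ep)$ of order $\ep|\log\ep|$, to a one-dimensional Brownian motion $B_i$ started at $z=d(x,t)$ for which $|d(\hat{X}^\ep_i(u),t-u)-B_i(u)|\leq \kappa_0\beta u+\ep$ until the lineage leaves the $\beta$-tube around $\Gamma_{t-u}$. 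Conditioning on the tree and these couplings, the state at $x$ is computed by running the algorithm of Section \ref{dual} along the embedded regular tree on the leaf values $\tilde p_i:=P^\ep_{\hat{X}^\ep_i(s)}[\VV_p(\hat\XX^\ep(t-s))=1]$; the inductive hypothesis gives $\tilde p_i\leq P^\ep_{z^+_{0,i}}[\VV(\BB(t-s))=1]+C_1\ep^l$ with $z^+_{0,i}=d(\hat{X}^\ep_i(s),t-s)+\gamma(t-s)\ep|\log\ep|$, and I would then invoke Lemma \ref{217} to pass to $P^\ep_{z^+_{2,i}}[\VV(\BB(t))=1]$ with $z^+_{2,i}=B_i(s)+\gamma(t)\ep|\log\ep|$. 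The point is that the extra margin $\gamma(t)-\gamma(t-s)\geq c\,s\,\gamma(t-s)$ is exactly what pays for the $\kappa_0\beta s+\ep$ discrepancy and for the interface motion, while the possibility that a lineage leaves the $\beta$-tube is the situation dealt with by the argument-by-contradiction inside Lemma \ref{217}: there the one-dimensional value is already within $\ep^l$ of $u_\pm$ by Theorem \ref{thm2.5}, so the comparison is immediate.

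To close the step I would use that the perturbation is attractive, so the algorithm is monotone in its leaf states --- each branch point applies $g$, or a coalescence partition followed by $g$, and $g$ is nondecreasing by (G5) --- together with monotonicity of $z\mapsto P^\ep_z[\VV(\BB(t))=1]$: the root value computed by $\hat\XX^\ep$ is then at most the value the same regular tree outputs on leaf values $P^\ep_{z^+_{2,i}}[\VV(\BB(t))=1]$, up to the $\ep^l$ from the bad event and the $C_1\ep^l$ inductive error. That output is $P^\ep_{z^+_0}[\VV(\BB(t))=1]$ up to negligible terms, because the branching Brownian motion started at $z^+_0=d(x,t)+\gamma(t)\ep|\log\ep|$, run over $[0,s]$ with the same branching structure and Brownian increments, has its depth-$A(l)|\log\ep|$ sub-roots exactly at $B_i(s)+\gamma(t)\ep|\log\ep|=z^+_{2,i}$. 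Iterating over the $\lceil (T^*-\delta_d)/s\rceil$ steps covers $[\delta_d,T^*]$, and with $s$ of order $\ep^2|\log\ep|$ the compounded margin is of order $K_1e^{K_2T^*}\ep|\log\ep|$, as needed.

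The hard part will be the bookkeeping that prevents the errors from exploding through the regular-tree iteration, whose one-step expansion factor is $g'(u_0)>1$. The resolution I would pursue is the one used in \cite{Etheridge}: after Lemma \ref{217} has been applied the leaf values are genuine time-$t$ values of $\VV(\BB(\cdot))$, and by Theorem \ref{thm2.5} these are already within a high power of $\ep$ of a stable fixed point unless the corresponding $B_i(s)$ lies in the $O(\ep|\log\ep|)$ window around the interface; on those rare interface leaves one bounds the value by $1$ using monotonicity, and on all other leaves $g'(u_\pm)<1$ by (G2) makes the iteration contract, so $C_1\ep^l$ is not amplified. Calibrating $\beta(\ep)$, $K_1(l)$, $K_2(l)$, the depth $A(l)$, and the fraction of interface leaves against one another is the technical core; the only genuinely new features relative to \cite{Etheridge} are that $\hat\XX^\ep$ replaces $\WW_t$ (controlled by Lemma \ref{independence} and Lemma \ref{err}) and that our $g$ is a general map satisfying (G0)--(G5) rather than a single cubic.
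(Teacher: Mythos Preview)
Your base case on $[\delta_d,\delta'_d]$ matches the paper. The extension to $[\delta_d,T^*]$, however, is organized very differently, and your version has a gap.

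The paper does \emph{not} induct forward in fixed time steps of size $s\sim\ep^2|\log\ep|$, nor does it decompose on an embedded regular tree of depth $A(l)|\log\ep|$. Instead it argues by contradiction: let $T'$ be the infimum of times where \eqref{2.39} fails, pick $T\in[T',T'+\ep^{l+3}]$ where it fails at some $x$, and condition on the \emph{first} branching time $S$ only. On $\{S\ge\ep^{l+3}\}$ one has $T-S\le T'$, so the inductive bound (with the full constant $C_1$) is available at the single location $\hat X^\ep_S$. A single application of $g$ then feeds directly into Lemma~\ref{217}, which is precisely a one-branching-event estimate: it takes $g(\,\cdot\,+C_1\ep^l)$ and returns $g(\,\cdot\,)+(1-c_0/3)C_1\ep^l$ plus lower-order terms. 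Summing, one gets the bound at time $T$ with constant $(1-c_0/4)C_1<C_1$, contradicting the choice of $T$. There is no iteration and hence no error accumulation at all.

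Your scheme, by contrast, would have $\lceil (T^*-\delta_d)/s\rceil\sim\ep^{-2}/|\log\ep|$ steps, each of which runs the inductive $C_1\ep^l$ error through a tree of depth $A(l)|\log\ep|$. You correctly identify this as the hard part, but the resolution you sketch (separate interface leaves from non-interface leaves and rely on $g'(u_\pm)<1$) is exactly the content of Lemma~\ref{217} for a \emph{single} branching; pushing it through $A(l)|\log\ep|$ levels of the tree, and then through $\ep^{-2}$ time steps, is not what Lemma~\ref{217} gives you and would require a separate, nontrivial argument. In particular, Lemma~\ref{217} takes as input $E_x[g(P^\ep_{z_1^+}[\cdot]+C_1\ep^l)]$ with a single $g$ applied; it does not control an $A(l)|\log\ep|$-fold composition. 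The first-branching/contradiction device is what makes the contraction factor $(1-c_0/3)$ do all the work in one shot, and it is the missing idea in your proposal.
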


The key ingredient for proving Proposition \ref{2.16} is the following lemma, which is an analogue of Lemma 2.18 in \cite{Etheridge}. The idea of the proof remains the same but the coefficients are slightly different due to the differences in the $g$'s. Let $B_t\in \RR$ be a one-dimensional Brownian motion that can be thought of as a single lineage in the branching Brownian motion $\BB(t)$.
\begin{align*}
z^\pm_1 & = d(\hat{X}^\ep_s,t-s)\pm \gamma(t-s)\ep|\log \ep|\\
z^\pm_2 & = B_s\pm \gamma(t)\ep|\log \ep|
\end{align*}
\begin{lemma}\label{217}
Let $l\in \mathbb{N}$ with $l\geq 4$ and $\sigma_1(l)$ be as in Lemma \ref{upden}. Let $\delta_0$ and $c_0$ be chosen as in (G4). Choose $C_1$ sufficiently large so that $C_1>\max\{ 2(1-c_0)/c_0, 3/(2c_0)\}.$
Let $C_2=\max_{0\leq p\leq 1}C_1|g'(p)|$. Let $K_1>0$. There exists $K_2=K_2(K_1,l)>0$ and $\ep_d(l,K_1,K_2)>0$ such that for all $\ep\in(0,\ep_d), x\in\RR^d$, $s\in [0,(l+1)\ep^2|\log \ep|]$ and $t\in [s,T^*]$,
\begin{align}\label{2.42}
\nonumber E_x\bigl[g(P^{\ep}_{z^+_1}[\VV(\BB(t-s))&=1]+C_1\ep^{l})\bigr]\\
&\leq (1-c_0/3)C_1\ep^l+E_{d(x,t)}\bigl[g(P^{\ep}_{z^+_2}[\VV(\BB(t-s))=1])\bigr]
+C_2\ep^l1_{s\leq \ep^4}\\
\nonumber E_x\bigl[g(P^{\ep}_{z^-_1}[\VV(\BB(t-s))&=0]+C_1\ep^{l})\bigr]\\
&\leq (1-c_0/3)C_1\ep^l+ E_{d(x,t)}\bigl[g(P^{\ep}_{z^-_2}[\VV(\BB(t-s))=0])\bigr]
+C_2\ep^l1_{s\leq \ep^4}
\end{align}
\end{lemma}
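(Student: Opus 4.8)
Since the whole construction is symmetric under $0\leftrightarrow 1$ (reflect space and use (G1), exactly as in the antisymmetry discussion of Section~\ref{bbmd1}), it suffices to prove \eqref{2.42}; fix $x$, $s\le(l+1)\ep^2|\log\ep|$ and $t\in[s,T^*]$. The idea is to realize $z^+_1$ and $z^+_2$ on a common probability space, show $z^+_1$ is essentially dominated by $z^+_2$, and then compare the two values of $g$. First couple the single lineage $\hat X^\ep_\cdot$ with a $d$-dimensional Brownian motion $W_\cdot$ from $x$ via Lemma~\ref{err}; since $\eta=\delta\ep$ is super-polynomially small, its proof in fact delivers, for any fixed $N$, an event $\mathcal G_1$ of probability $\ge 1-\ep^N$ on which $|\hat X^\ep_r-W_r|\le\ep^N$ for all $r\le(l+1)\ep^2|\log\ep|$. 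Next, in the regime $|d(x,t)|\le 2\gamma(t)\ep|\log\ep|$, couple $d(W_\cdot,t-\cdot)$ with a one-dimensional Brownian motion $B_\cdot$ from $d(x,t)$ via Proposition~\ref{P2.13} with tube width $\beta:=|d(x,t)|+c_\ast\ep|\log\ep|$, where $c_\ast$ bounds the displacement of $W$ over our interval; then $\beta\le 3\gamma(t)\ep|\log\ep|<\kappa_0$ for small $\ep$, and on a further event $\mathcal G_2$ of probability $\ge 1-\ep^N$ the exit time in Proposition~\ref{P2.13} exceeds $(l+1)\ep^2|\log\ep|$, so the coupling is in force on all of $[0,s]$. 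Put $\mathcal G=\mathcal G_1\cap\mathcal G_2$; on $\mathcal G^c$ the integrand is $\le 1$ and contributes $\le\ep^{2l}$, negligible at the end. On $\mathcal G$ the two couplings chain to $d(\hat X^\ep_s,t-s)\le B_s+\kappa_0\beta s+\ep^N$, i.e.
$$
z^+_1\le z^+_2-\Bigl[(\gamma(t)-\gamma(t-s))\ep|\log\ep|-\kappa_0\beta s-\ep^N\Bigr].
$$

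\emph{Choice of $K_2$.} Since $\gamma(t)-\gamma(t-s)=\gamma(t)\bigl(1-e^{-K_2 s}\bigr)\ge\tfrac12 K_2\gamma(t)s$ while $\kappa_0\beta s\le 3\kappa_0\gamma(t)\ep|\log\ep|\,s$, choosing $K_2$ large relative to $\kappa_0$ makes the bracket $\ge\tfrac14 K_1K_2\,s\,\ep|\log\ep|$ \emph{uniformly in $t\in[0,T^*]$}; this is exactly why $\beta$ must be comparable to $\gamma(t)\ep|\log\ep|$ rather than a fixed constant, so that the factor $e^{K_2T^*}$ never enters the comparison. Hence on $\mathcal G$: always $z^+_1\le z^+_2+\ep^N$, and for $s\ge\ep^4$ — where $\ep^N$ is swamped by $\tfrac14 K_1K_2 s\ep|\log\ep|\ge\tfrac14 K_1K_2\ep^5|\log\ep|$ — also $z^+_1\le z^+_2$ with $z^+_2-z^+_1\ge\tfrac18 K_1K_2\,s\,\ep|\log\ep|$.

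\emph{Comparing $g$.} By monotonicity of $z\mapsto P^\ep_z[\VV(\BB(t-s))=1]$ (attractiveness plus monotone initial data) and of $g$, it remains to bound $g\bigl(P^\ep_{z^+_1}[\VV(\BB(t-s))=1]+C_1\ep^l\bigr)$ by $g\bigl(P^\ep_{z^+_2}[\VV(\BB(t-s))=1]\bigr)$ plus the allowed error. If $|d(x,t)|>2\gamma(t)\ep|\log\ep|$ then on $\mathcal G$ the points $z^+_1,z^+_2$ have the sign of $d(x,t)$ and magnitude $\ge c_1(1)\ep|\log\ep|$, so by Theorem~\ref{thm2.5} and Lemma~\ref{upden} both probabilities lie within $\ep^l$ of the same stable fixed point, where $g'<1-c_0$ by (G2) and (G4); a first-order estimate of $g$ there gives the bound with room to spare (this also disposes of the case $|d(x,t)|>\kappa_0$ once the deviation bounds are in place). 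If $|d(x,t)|\le 2\gamma(t)\ep|\log\ep|$, split on $q:=P^\ep_{z^+_2}[\VV(\BB(t-s))=1]$: if $q$ is within $\delta_0$ of $u_\pm$ (or outside $[u_-,u_+]$) argue again via $g'<1-c_0$; if $q\in(u_-+\delta_0,u_+-\delta_0)$ then by Theorem~\ref{thm2.5} we are near the effective interface, $|z^+_2|<c_1(1)\ep|\log\ep|$, and Corollary~\ref{cor2.12} applies. For $s\ge\ep^4$ it converts the gap into $q-P^\ep_{z^+_1}[\VV(\BB(t-s))=1]\ge cK_1K_2 s\ge C_1\ep^l$ once $K_2$ is large relative to $C_1$ (here $l\ge 4$ ensures $\ep^{l-4}\le 1$), so $g(P^\ep_{z^+_1}[\cdots]+C_1\ep^l)\le g(q)$. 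For $s<\ep^4$ the estimate $z^+_1\le z^+_2+\ep^N$ together with a crude (polynomial-in-$\ep^{-1}$) spatial Lipschitz bound for the one-dimensional profile gives $|P^\ep_{z^+_1}[\cdots]-q|\le\ep^{l+1}$, and then, with $C_1$ large relative to $\max_p|g'(p)|$, $g(P^\ep_{z^+_1}[\cdots]+C_1\ep^l)\le g(q)+C_2\ep^l$ — this is the only place the term $C_2\ep^l 1_{s\le\ep^4}$ is needed. Collecting cases and adding the $\mathcal G^c$ contribution yields \eqref{2.42}, and \eqref{2.43} follows by the symmetry mentioned above.

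\emph{Main obstacle.} I expect the hard part to be the joint calibration of constants rather than any single estimate: $K_1$ must be large (so the one-sided buffer $\gamma(t)\ep|\log\ep|$ dominates $c_1(1)\ep|\log\ep|$ and the displacement terms), $K_2$ large relative to $\kappa_0$ (so the $\gamma$-gap beats the curvature error for \emph{every} $t\in[0,T^*]$, which is what forces the threshold $|d(x,t)|\le 2\gamma(t)\ep|\log\ep|$) and relative to $C_1$ (so Corollary~\ref{cor2.12} already activates the contraction at $s=\ep^4$), and $C_1$ large relative to $\max|g'|$ — all without any of these feeding back to enlarge $\beta$. Relative to \cite{Etheridge} the scheme is unchanged; the work is carrying the different constants in $g_1$, $g_2$, $g_3$ through this chain, and checking the crude Lipschitz bound used in the very-small-$s$ regime.
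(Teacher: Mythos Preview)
Your proposal follows the paper's proof closely: a three-way split on $|d(x,t)|$, the chain of couplings $\hat X^\ep\to W\to B$ via Lemma~\ref{err} and Proposition~\ref{P2.13}, and in the near-interface regime the dichotomy between the ``bulk'' case (slope bound from Corollary~\ref{cor2.12}) and the ``plateau'' case ($g'<1-c_0$ from (G4)). The paper carries out the bulk/plateau split on the value $P^\ep_{z_4^+}$ at the intermediate point $z_4^+=B_s+\kappa_0\beta s+\ep+\gamma(t-s)\ep|\log\ep|$ rather than at your $z_2^+$, and pins down $K_2$ by the explicit relation $(K_1+1)(K_2-\kappa_0)-\kappa_0 R=c_1(1)$ rather than asking that $K_2$ be large relative to $C_1$; these are cosmetic differences.

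One genuine gap: the ``crude polynomial-in-$\ep^{-1}$ spatial Lipschitz bound'' you invoke for $s<\ep^4$ is not available. The Lipschitz constant of $z\mapsto P^\ep_z[\VV(\BB(\tau))=1]$ is of order $e^{c\ep^{-2}\tau}/\sqrt{\tau}$ (it feels every leaf of the tree), and for $\tau$ of order one this is far from polynomial, so $\ep^N$ error with fixed $N$ cannot be absorbed. The paper avoids this step entirely: for small $s$ it simply uses $g(P^\ep_{z_4^+}+C_1\ep^l)\le g(P^\ep_{z_4^+})+C_2\ep^l$ via $\max|g'|$ and then $z_4^+\le z_2^+$. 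Your route is easily repaired once you notice that the proof of Lemma~\ref{err} in fact delivers coupling error $O(\eta^{1/6})=e^{-c\ep^{-3}}$, which dominates any $e^{c'\ep^{-2}T^*}$ coming from the leaf count. A second small point: your near/far threshold $2\gamma(t)\ep|\log\ep|$ tacitly assumes $K_1\gtrsim c_1(l)$ (you say as much at the end), whereas the lemma is stated for every $K_1>0$; the paper's threshold $(2c_1(l)+2(l+1)d+\gamma(t-s))\ep|\log\ep|$ carries the additive constant needed to cover small $K_1$.
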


To keep our approach parallel to the one in \cite{Etheridge} we defer the proof of Lemma \ref{217} to the next subsection. The only property of $g$ that is used in the proof below is its monotonicity.

\mn
\textit{Proof of Proposition \ref{2.16}.}  We begin by proving (\ref{2.39}) for $t\in[\delta_d,\delta_d']$. Take $K_1=b_d(l)+c_1(l)$ where $b_d(l)$ is as defined in Proposition \ref{215} and $c_1$ is as defined in Theorem \ref{thm2.5}. Let $K_2=K_2(K_1,l)$, as defined in Lemma \ref{217}. If $d(x,t)\leq -b_d(l)\ep|\log \ep|$, then by Proposition \ref{215}, $P^\ep_x[\VV_p(\hat{\XX}^\ep(t))=1]\leq \ep^{l}$. Then (\ref{2.39}) holds.

On the other hand, if $d(x,t)\geq -b_d(l)\ep|\log \ep|$, then $d(x,t)+\gamma(t)\ep|\log \ep|\geq c_1(l)\ep|\log \ep|$, and by Theorem \ref{thm2.5}
$$P^\ep_{d(x,t)+\gamma(t)\ep|\log \ep|}[\VV(\BB(t))=1]\geq u_+-\ep^{l}.$$
By definition of $\delta_d$ in Proposition \ref{215}, $t\geq \sigma_1(l)\ep|\log \ep|$. It follows from the same argument as in Lemma \ref{upden} that 
$$P^\ep_x[\VV_p(\hat{\XX}^\ep(t))=1]\leq u_++\ep^{l}.$$
Therefore when $\ep$ is sufficiently small (\ref{2.39}) holds.

We follow the proof in \cite{Etheridge} and assume that there exists $t\in[\delta'_d,T^*]$ such that for some $x\in\RR^d$ \eqref{2.39} does not hold, i.e.,
$$ P^\ep_x[\VV_p(\hat{\XX}^\ep(t))=1]-P^\ep_{d(x,t)+\gamma(t)\ep|\log \ep|}[\VV(\BB(t))=1]> C_1\ep^{l}.$$
Let $T'$ be the infimum of the set of such $t$ and choose
$$T\in[ T', \min( T'+\ep^{l+3}, T^*)]$$
which is in the set of such $t$. Hence there exists some $x\in\RR^d$ such that 
\beq\label{2.46}
P^\ep_x[\VV_p(\hat{\XX}^\ep(T))=1]-P^\ep_{d(x,T)+\gamma(T)\ep|\log \ep|}[\VV(\BB(T))=1]>C_1\ep^{l}.
\eeq
Our goal is to contradict \eqref{2.46} by showing that 
\beq\label{2.47}
P^\ep_x[\VV_p(\hat{\XX}^\ep(T))=1]\leq P^\ep_{d(x,T)+\gamma(T) \ep|\log \ep|}[\VV(\BB(T))=1]+(1-c_0/4)C_1\ep^{l}.
\eeq
We write $S$ for the time of the first branching event in $\hat{\XX}^\ep(T)$ and $\hat{X}^\ep(S)$ for the position of the initial particle at that time. By the strong Markov property
\begin{align}\label{2.48}
\nonumber P^\ep_x[\VV_p(\hat{\XX}^\ep(T))=1]&\leq E^\ep_x[g(P^\ep_{\hat{X}^\ep(S)}[\VV_p(\hat{\XX}^\ep(T-S))=1]1_{S\leq T-\delta_d}]\\
&+E^\ep_x[P^\ep_{\hat{X}^\ep(T-\delta_d)}[\VV_p(\hat{\XX}^\ep(\delta_d))=1]1_{S\geq T-\delta_d}]
\end{align}
Let $c^*$ be a constant such that $c^*\ep^{-2}$ is the reaction rate for the process that we consider. For sexual reproduction model with fast stirring, $c^*=(1+\beta)$ as defined in \eqref{cstar}. For voter model perturbations, $c^*=1$. Without loss of generality we can assume $c^*\geq 1$ since otherwise we can rescale time to obtain $c^*\geq 1$. Since $S =Exponential(c^*\ep^{-2})$ and $T-\delta_d\geq \delta'_d-\delta_d=(l+1)\ep^2|\log \ep|$, we have 
$$
E^\ep_x\left[P^\ep_{\hat{X}^\ep_{T-\delta_d}}[\VV_p(\hat{\XX}^\ep(\delta_d))=1]1_{\{S\geq T-\delta_d\}}\right]\leq P[S\geq (l+1)\ep^2|\log \ep|]\leq \ep^{c^*(l+1)}\leq \ep^{l+1}.
$$
To bound the first term in \eqref{2.48}, partition on the event $\{S\leq \ep^{l+3}\}$,
\begin{align}\label{2.50}
\nonumber &E^\ep_x[g(P^\ep_{\hat{X}^\ep_S}[\VV_p(\hat{\XX}^\ep(T-S))=1]
1_{\{S\leq T-\delta_d\}}]\\
\nonumber \leq &P[S\leq \ep^{l+3}]+
E^\ep_x[g(P^\ep_{\hat{X}^\ep_S}[\VV_p(\hat{\XX}^\ep(T-S))=1]
1_{\{\ep^{l+3}\leq S\leq T-\delta_d\}}]\\
\leq &\ep^{l+1}+E^\ep_x[ g( P^\ep_{d(\hat{X}^\ep_S,T-S)+\gamma(T-S)\ep|\log \ep|}[\VV(\BB(T-S))=1]+C_1\ep^{l})1_{\{S\leq T-\delta_d\}}].
\end{align}
The last line follows from the minimality of $T'$ and the fact that $T-S\leq T'$ on the event $\{ S\geq \ep^{l+3}\}$.
\begin{align*}
&E^\ep_x[ g( P^\ep_{d(\hat{X}^\ep_S,T-S)+\gamma(T-S)\ep|\log \ep|}
[\VV(\BB(T-S))=1]+C_1\ep^{l})1_{S\leq T-\delta_d}] \\
\leq & \int_0^{(l+1)\ep^2|\log \ep|} c^*\ep^{-2} e^{-c^*\ep^{-2s}} E_x[g(P^\ep_{d(\hat{X}^\ep_s,T-s)+K_1e^{K_2(T-s)}\ep|\log \ep|}[\VV(\BB(T-s))=1]+C_1\ep^{l})]\;ds \\
\nonumber&+P[S\geq (l+1)\ep^2|\log\ep|].
\end{align*}
Using  Lemma \ref{217} we get
\begin{align*}
\nonumber\leq & (1-c_0/3)C_1\ep^l+ \int_0^{(l+1)\ep^2|\log \ep|} c^*\ep^{-2} 
e^{-c^*\ep^{-2}s} E_{d(x,t)}[g(P^{\ep}_{B_s+\gamma(t)\ep|\log \ep|}[\VV(\BB(t-s))=1])]\;ds\\
\nonumber&+C_2\ep^l P[S\leq \ep^4]+\ep^{l+1}.
\end{align*}
Let $S'$ denotes the first branching time in $(\BB(s))_{s\geq 0}$ and $B_{S'}$ the position of the ancestor at that time. Noting that $S'$ has the same distribution as $S$ we have
\beq\label{2.51}
\leq (1-c_0/3)C_1\ep^l+2\ep^{l+1}+E^\ep_{d(x,t)}[g(P^\ep_{B_{S'}+K_1e^{K_2T}\ep|\log \ep|}[\VV(\BB(T-S'))=1]1_{S'\leq T-\delta_d'}]].
\eeq
 Combining \eqref{2.48}, \eqref{2.50} and \eqref{2.51},
\begin{align*}
P^\ep_x[\VV_p(\hat{X}^\ep(T))=1]&\leq 4\ep^{l+1}+(1-c_0/3)C_1\ep^l+E^\ep_{d(x,t)}[g(P^\ep_{B_{S'}+K_1e^{K_2T}\ep|\log \ep|}[\VV(\BB(T-S'))=1]]\\
&\leq (1-c_0/4)C_1\ep^l+P^\ep_{d(x,T)+K_1e^{K_2T}\ep|\log \ep|}[\VV(\BB(T))=1],
\end{align*}
which proves \eqref{2.47} and hence we have proved \eqref{2.39} by an argument of contradiction. The proof of \eqref{2.40} is similar.
\qed

Before giving the proof of Lemma \ref{217} we prove the main result.

\begin{theorem}\label{main}
Let $u^\ep(t,x)=P(\xi^\ep_t(x)=1)$ with $u^\ep(0,x)=p(x)$. Let $T^*\in(0,\mathscr{T})$ and $k\in \mathbb{N}$ be fixed. Choose $\sigma_1(k)$  as in Lemma \ref{upden}. There exist $\ep_d(k)>0$ and $a_d(k),c_d(k)\in (0,\infty)$ such that for all $\ep\in(0,\ep_d)$ and $t$ satisfying $\max\{a_d, \sigma_1\}\ep^2|\log \ep|\leq t\leq T^*$,

\begin{enumerate}
\item for $x$ such that $d(x,t) \geq c_d(k)\ep|\log \ep|$, we have $u^{\ep}(t,x)\geq u_+-\ep^k$,
\item for $x$ such that $d(x,t)\leq -c_d(k)\ep|\log \ep|$, we have $u^{\ep}(t,x)\leq u_-+\ep^k$.
\end{enumerate}
\end{theorem}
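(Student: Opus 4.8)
The plan is to deduce Theorem~\ref{main} by combining the pieces already assembled: the duality identity $u^\ep(t,x)=P^\ep_x(\VV_p(\XX^\ep(t))=1)$, the coupling of $\XX^\ep$ with $\hat{\XX}^\ep$ from Lemma~\ref{independence} (for the sexual reproduction model) or Lemma~\ref{voterinde} (for voter model perturbations), the comparison with the special one-dimensional initial condition in Proposition~\ref{2.16}, and the one-dimensional estimate Theorem~\ref{thm2.5}. The one structural device is to run the argument with a larger exponent $l=l(k)\geq 4$, and only at the end choose $l$ large (and $\ep$ small) so that every $O(\ep^l)$ error, with its constant depending on $l$ alone, is absorbed into $\ep^k$.

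First I would record that, by the construction of the dual and the coupling lemmas, for $\ep<\ep^*(l)$ and all $x$, $t\leq T^*$,
$$
|u^\ep(t,x)-P^\ep_x(\VV_p(\hat{\XX}^\ep(t))=1)|\leq \ep^l,
$$
since on the coupling event $\XX^\ep(s)=\hat{\XX}^\ep(s)$ for all $s\leq T^*$, so the computing algorithm returns the same value for both processes. Next I would fix the spatial threshold: with $c_1=c_1(l)$ from Theorem~\ref{thm2.5} and $K_1=K_1(l),K_2=K_2(l)$ from Proposition~\ref{2.16}, set $c_d(k)=c_1(l)+K_1 e^{K_2 T^*}$, so that on $[0,T^*]$ the points $z^{\pm}_0=d(x,t)\pm K_1 e^{K_2 t}\ep|\log\ep|$ obey: $d(x,t)\geq c_d(k)\ep|\log\ep|$ forces $z^-_0\geq c_1(l)\ep|\log\ep|$, while $d(x,t)\leq -c_d(k)\ep|\log\ep|$ forces $z^+_0\leq -c_1(l)\ep|\log\ep|$.

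For part~(1), on $\{d(x,t)\geq c_d(k)\ep|\log\ep|\}$ I would apply \eqref{2.40}, which gives $P^\ep_x[\VV_p(\hat{\XX}^\ep(t))=0]\leq P^\ep_{z^-_0}[\VV(\BB(t))=0]+C_1\ep^l$, and combine it with Theorem~\ref{thm2.5}(1) at $z^-_0$, which yields $P^\ep_{z^-_0}[\VV(\BB(t))=1]\geq u_+-\ep^l$, i.e.\ $P^\ep_{z^-_0}[\VV(\BB(t))=0]\leq 1-u_++\ep^l$ (here one uses that $\VV$ is $\{0,1\}$-valued, so the two probabilities are complementary). Taking complements and using the display above gives $u^\ep(t,x)\geq u_+-(2+C_1)\ep^l\geq u_+-\ep^k$ for $l$ large and $\ep$ small. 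Part~(2) is the mirror image: on $\{d(x,t)\leq -c_d(k)\ep|\log\ep|\}$ one uses \eqref{2.39} together with Theorem~\ref{thm2.5}(2) at $z^+_0$, namely $P^\ep_{z^+_0}[\VV(\BB(t))=1]\leq u_-+\ep^l$, to obtain $u^\ep(t,x)\leq u_-+(2+C_1)\ep^l\leq u_-+\ep^k$. The admissible time window is exactly the one required by Proposition~\ref{2.16}, $t\geq\delta_d(l)=\max\{a_d(l),\sigma_1(l)\}\ep^2|\log\ep|$, which after a harmless relabeling of the constants $a_d,\sigma_1$ is the stated hypothesis on $t$.

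Since the real work is contained in Propositions~\ref{215} and~\ref{2.16} and Lemma~\ref{217}, the only genuine obstacle at this stage is the constant bookkeeping: ensuring $c_d(k)$ dominates the drift $K_1 e^{K_2 t}$, which grows in $t$ but stays bounded by $K_1 e^{K_2 T^*}$ on $[0,T^*]$, plus the one-dimensional threshold $c_1(l)$; and choosing the single exponent $l=l(k)$ only after all of $C_1(l),K_1(l),K_2(l)$ are fixed, so that $(2+C_1)\ep^l\leq\ep^k$ for small $\ep$. One must also keep the two directions straight: part~(1) needs the $0$-version \eqref{2.40} of Proposition~\ref{2.16} paired with the lower bound in Theorem~\ref{thm2.5}, while part~(2) needs the $1$-version \eqref{2.39} paired with the upper bound.
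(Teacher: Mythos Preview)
Your proposal is correct and follows essentially the same route as the paper: choose $c_d=c_1+K_1e^{K_2T^*}$, apply Proposition~\ref{2.16} to compare $P^\ep_x[\VV_p(\hat{\XX}^\ep(t))=\cdot]$ with the one-dimensional quantity at $z_0^{\pm}$, invoke Theorem~\ref{thm2.5} at $z_0^{\pm}$, and finally transfer to $u^\ep(t,x)$ via the coupling Lemma~\ref{independence}/\ref{voterinde}. The only cosmetic difference is that the paper works directly with the exponent $k$ and ends with a bound of the form $u_{\pm}\pm(C_1+2)\ep^k$, whereas you introduce an auxiliary $l=l(k)\ge 4$ and absorb the constant $(2+C_1)$ into $\ep^{l-k}$ at the end; your bookkeeping is slightly cleaner and also makes explicit the constraint $l\ge 4$ needed to invoke Proposition~\ref{2.16}.
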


\begin{proof}
We first prove the result for $\hat{\XX}^\ep(t)$. We choose $c_d(k)=c_1(k)+K_1e^{K_2T^*}$. Thus for $t\in[\delta_d,T^*]$ and $x\in\RR^d$ such that $d(x,t)\leq -c_d(k)\ep|\log \ep|$ we have 
$$d(x,t)+K_1e^{K_2T^*}\leq -c_1(k)\ep|\log \ep|.$$
It follows from Proposition \ref{2.16} and Theorem \ref{thm2.5} that $P^\ep_x[\VV_p(\hat{\XX}^\ep(t))=1]\leq u_-+(C_1+1)\ep^k$. Similarly, if $d(x,t)\geq c_d(k)\ep|\log \ep|$ then $d(x,t)-K_1e^{K_2T^*}\geq c_1(k)\ep|\log \ep|$. Hence 
$$
P^\ep_x[\VV_p(\hat{\XX}^\ep(t))=0]\leq P^\ep_{d(x,t)-\gamma(t)\ep|\log \ep|}[\VV(\BB(t))=0]+C_1\ep^k\leq 1-u_++(1+C_1)\ep^k.
$$
It remains to show $u^\ep(t,x)$ is close to $P^\ep_x[\VV_p(\hat{\XX}^\ep(t))=1]$. Let $G=\{\XX^\ep(t)=\hat{\XX}^\ep(t) \text{ for } t\leq T^*\}$. Lemma \ref{independence} implies that $P(G)\geq 1-\ep^k$.

Then 
\begin{align*}
u^\ep(t,x)&=P^\ep_x[\VV_p(\XX(t))=1]\\
&=P^\ep_x[\{\VV_p(\hat{\XX}^\ep(t))=1\}\cap G ]+P^\ep_x[\{\VV_p(\XX(t))=1\}\cap G^c ]\\
&\leq P^\ep_x[\VV_p(\hat{\XX}^\ep(t))=1]+\ep^k
\end{align*}
On the other hand, 
$$u^\ep(t,x)\geq  P^\ep_x[\{\VV_p(\hat{\XX}^\ep(t))=1\}\cap G ]\geq P^\ep_x[\VV_p(\hat{\XX}^\ep(t))=1]-P[G^c]\geq P^\ep_x[\VV_p(\hat{\XX}^\ep(t))=1]-\ep^k .$$
Therefore, $| u^\ep(t,x)-P^\ep_x[\VV_p(\hat{\XX}^\ep(t))=1]|\leq \ep^k.$
\end{proof}

\subsubsection{Proof of Lemma \ref{217}}

\begin{proof}
We continue to write $\gamma(t)=K_1e^{K_2t}$. Define a good event by 
$$
G=\{ |d(W_s,t-s)-d(\hat{X}^\ep_s,t-s)| \leq \ep \text{ for }s\in [0, (l+1)\ep^2|\log \ep|]\}.$$
The triangle inequality implies $d(W_s,t-s) \leq d(\hat{X}^\ep_s,t-s)+|\hat{X}^\ep_s-W_s|$. There is a similar result with $W$ and $X$ interchanged so
\beq
|d(W_s,t-s)-d(\hat{X}^\ep_s,t-s)|\leq |\hat{X}^\ep_s-W_s|
\label{dWX}
\eeq
Lemma  \ref{err} implies that for sufficiently small $\ep$
\beq
P(G)\geq 1-\ep^{2l}.
\label{PGbd}
\eeq

We choose $\kappa_0$ as in \eqref{2.33} and $c_1(k)$ from Theorem \ref{thm2.5}. Let \beq
R=2c_1(l)+4(l+1)d+1
\label{defR}
\eeq 
and fix $K_2$ such that 
\beq\label{2.53}
(K_1+1)(K_2-\kappa_0)-\kappa_0R=c_1(1).
\eeq
Let $s\in [0, (l+1)\ep^2|\log \ep|]$ and 
$$
A_x=\left\{ \sup_{u\in[0,s]} |W_u-x|\leq 2(l+1)d\ep|\log \ep| \right\}.
$$
Using the reflection principle
\begin{align}
P(A_x^c) & \leq 2d P_0\left( \sup_{u \in [0,s]} B_u > 2(\ell+1) \ep|\log\ep| \right)
\nonumber\\
&\le 4d P_0 ( B_s > 2(\ell+1) \ep|\log\ep| ) \le 4d \ep^{l+1}
\label{PAxbd}
\end{align} 
where we have used the tail bound  the tail bound 
$$
P(B_s \ge x\sqrt{s}) \le \exp(-x^2/4)
$$
with $s=(l+1)\ep^2|\log \ep|$ and  $x = 2\sqrt{(\ell+1)|\log \ep|}$.

\mn
Recall that in Lemma \ref{217} $s\in[0,(\ell+1)\ep^2|\log\ep|]$ is fixed and $t\in[s,T^*]$.
We consider three cases:
\begin{enumerate}
\item $d(x,t)\leq -\left(2c_1(l)+2(l+1)d+\gamma(t-s)\right) \ep|\log \ep|$,
\item $d(x,t)\geq \left(2c_1(l)+2(l+1)d+\gamma(t-s)\right) \ep|\log \ep|$,
\item $|d(x,t)|\leq \left(2c_1(l)+2(l+1)d+\gamma(t-s)\right) \ep|\log \ep|$.
\end{enumerate}
The first two are easy since $x$ is far from the interface so the probabilities of interest are either close to $u_+$ or close to $u_-$.

\mn
\textbf{Case 1}: By \eqref{2.34} there exists $v_0,V_0>0$ such that if $s\leq v_0$ and $x\in \RR^d$ then 
\beq
|d(x,t)-d(x,t-s)|\leq V_0s.
\label{dvst}
\eeq
We take $\ep_d$ sufficiently small in Lemma \ref{217} so that $(l+1)\ep^2|\log \ep|\leq v_0$ for all $\ep\in (0,\ep_d)$. Rearranging the definition of Case 1 and adding $d(W_s,t-s)$ to both sides
$$
 d(W_s,t-s)+\gamma(t-s)\ep|\log \ep|\leq 
-\left(2c_1(l)+2(l+1)d\right) \ep|\log \ep|+ d(W_s,t-s)-d(x,t)
$$
The triangle inequality implies $d(x,t-s)+|W_s-x| \ge d(W_s,t-s)$ so 
$$
 d(W_s,t-s)+\gamma(t-s)\ep|\log \ep|\leq -\left(2c_1(l)+2(l+1)d\right) \ep|\log \ep|+|W_s-x|+|d(x,t)-d(x,t-s)|.
$$
Using \eqref{dvst} with $s\leq (l+1)\ep^2|\log \ep|$we see that on $A_x$
$$
 d(W_s,t-s)+\gamma(t-s)\ep|\log \ep|\leq -2c_1(l)\ep|\log \ep|+V_0(l+1)\ep^2|\log \ep|. 
$$
On event $G\cap A_x$ when $\ep$ is sufficiently small, 
\begin{align*}
z^+_1 = d(\hat{X}^\ep_s,t-s)+\gamma(t-s)\ep|\log \ep| 
&\leq d(W_s,t-s)+\ep+\gamma(t-s)\ep|\log \ep|\\
&\leq -c_1(l)\ep|\log \ep|.
\end{align*}
Hence it follows from Theorem \ref{thm2.5} that
\begin{align*}
E_x&[g(P^{\ep}_{z^+_1}
[\VV(\BB(t-s))=1]+C_1\ep^{l})]
\leq E_x[g(u_-+(1+C_1)\ep^l)]+P_x[A^c_x]+P[G^c].
\end{align*}
Using (G4), \eqref{PAxbd}, and \eqref{PGbd} the above is
$$
\leq u_-+(1-c_0)\cdot (1+C_1) \ep^l +4d\ep^{l+1}+\ep^{2l} \leq u_-+(1-c_0/3)C_1\ep^l
$$
when $\ep$ is sufficiently small. As $u_-$ is a fixed point of $g$ and we start with initial condition $p_0(x)=u_+\cdot1_{\{x\geq 0\}}+u_-\cdot1_{\{x< 0\}}$ for the one dimensional BBM, the second term on the right hand side of \eqref{2.42} satisfies
$$E_{d(x,t)}\bigl[g(P^{\ep}_{z^+_2}[\VV(\BB(t-s))=1])\bigr]\geq u_-.$$
The third term on the right hand side of \eqref{2.42} is non-negative so the result follows.

\mn
\textbf{Case 2}: In this case $d(x,t)\geq (c_1(l)+2(l+1))\ep|\log \ep|$. Repeating the proof of \eqref{PAxbd} gives
\beq
P_{d(x,t)}[B_s\leq c_1(l)\ep|\log \ep|]
\leq P_0[B_s\geq 2(l+1)\ep|\log \ep|]\leq \ep^{l+1}
\label{BMbd}
\eeq
Recall $z^+_2 = B_s+\gamma(t)\ep|\log \ep|$. Using Theorem \ref{thm2.5} and \eqref{BMbd} and \eqref{G4} it follows that 
\begin{align*}
E_{d(x,t)}[&g(P^{\ep}_{z^+_2}[\VV(\BB(t-s))=1])]\\
&\geq E_{d(x,t)}[g(P^{\ep}_{z^+_2}[\VV(\BB(t-s))=1])
1_{\{B_s\geq c_1(l)\ep|\log \ep|\}}]\\
&\geq  g(u_+-\ep^{l})-\ep^{l+1}\geq u_+-(1-c_0)\ep^{l}-\ep^{l+1}
\geq u_+ -\ep^{l}
\end{align*}
when $\ep$ is small. Therefore, the right hand side of \eqref{2.42} for small $\ep$ is at least 
$$
(1-c_0/3)C_1\ep^l+u_+-\ep^{l}.
$$
Since the initial condition is $p_0(x)=u_+\cdot1_{\{x\geq 0\}}+u_-\cdot1_{\{x< 0\}}$, by the monotonicity of $g$ it is easy to see that for any $x\in \RR$ and $t\geq 0$,
$$
P^\ep_{x}[\VV(\BB(t))=1]\leq u_+.
$$
Hence using (G4) the left hand side of \eqref{2.42} is 
\begin{align*}
E_x[g(P^{\ep}_{z^+_1}[\VV(\BB(t-s))=1]+C_1\ep^{l})]
&\leq E_x[g(u_++C_1\ep^l)]\\
&\leq u_++(1-c_0) \cdot C_1 \ep^l \leq u_++ ((1-c_0/3)C_1-1)\ep^l,
\end{align*}
where the last line follows from the choice of $C_1$. So \eqref{2.42} holds in this case.

\mn
\textbf{Case 3}: We now turn to the case with 
$$
|d(x,t)|\leq \left(2c_1(l)+2(l+1)d+\gamma(t-s)\right) \ep|\log \ep|. 
$$
Using \eqref{dvst} we see that on the event $A_x$,  we have for $u\in[0,s]$
\begin{align*}
|d(W_u,t-u)|&\leq |W_u-x|+|d(x,t)|+|d(x,t)-d(x,t-u)|\\
&\leq \left(2c_1(l)+4(l+1)d+\gamma(t-s)\right) \ep|\log \ep|
+V_0(l+1)\ep^2|\log \ep|\\
&\leq (R+\gamma(t-s))\ep|\log \ep|,
\end{align*}
where $R=2c_1(l)+4(l+1)d+1$, see \eqref{defR}. Applying Proposition \ref{P2.13} with 
$$
\beta=(R+\gamma(t-s))\ep|\log \ep|
$$
 shows we can couple $(W_u)_{u\geq 0}$ with $(B_u)_{u\geq 0}$ (which starts from $d(x,t)$) in such a way that for 
$u\leq T_\beta = \inf\{ s \in [0,t) : |d(W_s,t-s)| >\beta \} \wedge t$,
$$
d(W_u,t-u)\leq B_u+\kappa_0\beta u.
$$
Note that $ A_x\subseteq\{T_\beta>s\}$. Let $\eta>0$. 
Recall $z^+_1  = d(\hat{X}^\ep_s,t-s) + K_1e^{K_2(t-s)}\ep|\log \ep|$ and let
\begin{align*}
&z_3^+ = d(W_s,t-s)+\ep+\gamma(t-s)\ep|\log \ep|\\
&z^+_4 = B_s+\kappa_0\beta s+\ep+\gamma(t-s)\ep|\log \ep|
\end{align*}
By the coupling between $d(W_t,t-s)$ and $B_s$ we have $z_3^+\leq z_4^+$.
By the convergence of $\hat{X}^\ep_s$ to  $W_s$ proved in Lemma \ref{err} and the monotonicity of $g$
\begin{align}\label{2.61}
\nonumber E_x[&g(P^{\ep}_{z^+_1}[\VV(\BB(t-s))=1]+C_1\ep^{l})]\\
\nonumber &\leq E_x[g(P^{\ep}_{z^+_3}[\VV(\BB(t-s))=1]+C_1\ep^{l})]
+P_x(A_x^c) + P(G^c)\\
&\leq  E_{d(x,t)}[g(P^{\ep}_{z^+_4}[\VV(\BB(t-s))=1]+C_1\ep^l)]+4d\ep^{l+1}+\ep^{2l}.
\end{align}
where in the last step we have used \eqref{PAxbd}. Let 
$$
E=\{ |P^{\ep}_{z^+_4}[\VV(\BB(t-s))=1]-u_0|\leq (u_+-u_0)-\delta_0\}.
$$
where $\delta_0$ is the constant defined before \eqref{G4}.

Consider first when the event $E$ occurs. 
\begin{align}
\nonumber&\gamma(t)\ep|\log \ep|-(\ep+\kappa_0\beta s+\gamma(t-s)\ep|\log \ep|)\\
\nonumber& \geq \gamma(t)\ep|\log \ep|-(\kappa_0\beta s+(K_1+1)e^{K_2(t-s)}\ep|\log \ep|)\\
\nonumber &= \left( (K_1+1)e^{K_2(t-s)}(e^{K_2s}-1-\kappa_0s)-\kappa_0Rs\right)
\ep|\log \ep|\\
&\geq ((K_1+1)(K_2-\kappa_0)-\kappa_0R)s\ep|\log \ep|
=c_1(1)s\ep|\log \ep| \label{4vs2}
\end{align}
where the last line follows from the choice of $K_2$ in \eqref{2.53}. 
Take $\ep_d$ sufficiently small so that $\ep_d<\min(\ep_1(1),\delta_0/2)$. For $\ep\in (0,\ep_d)$ we can apply Corollary \ref{cor2.12} to $z=z^+_4$ and $w=z^+_2$
Using \eqref{4vs2} to conclude $z_2^+-z_4^+ \ge c_1(1)s\ep|\log\ep|$
it follows that on $E$
\beq
P^\ep_{z^+_2}[\VV(\BB(t-s))=1]
-P^\ep_{z^+_4}[\VV(\BB(t-s))=1] \ge \frac{\delta_0s}{4}
\label{fromslope}
\eeq
so we have
$$
g(P^{\ep}_{z^+_4}[\VV(\BB(t-s))=1]+C_1\ep^l) 
\le g(P^{\ep}_{z^+_2}[\VV(\BB(t-s))=1]- \delta_0 s/4+C_1\ep^l)
$$
Recalling $s \le (\ell+1)\ep^2|\log \ep|$ and using the monotonicity of $g$  we can replace $- \delta_0 s/4+C_1\ep^l$ by 0 when $s > 4C_1 \ep^l/\delta_0$. If $\ell \ge 4$ and $s \le 4C_1 \ep^l/\delta_0$ the $s \le \ep^3$ for small $\ep$. Since $g'(p) \le C_2$ 
\begin{align}\label{2.63}
\nonumber g(P^{\ep}_{z^+_4}[\VV(\BB(t-s))=1]+C_1\ep^l) 
 &\le g(P^{\ep}_{z^+_2}[\VV(\BB(t-s))=1]) + \max_{0\leq p\leq 1}|g'(p)|\cdot C_1\ep^l 
1_{s\le \ep^3}\\
&\leq  g(P^{\ep}_{z^+_2}[\VV(\BB(t-s))=1]) +C_2\ep^l 1_{s\le \ep^3}
\end{align}

\eqref{G4} implies that If $p\geq u_+-\delta_0,\delta\geq 0$ then
\beq\label{gerr}
g(p+\delta)\leq g(p)+(1-c_0)\delta.
\eeq
Taking $\ep_d$ sufficiently small so that $C_1\ep^l<\delta_0$ for all $\ep\in(0,\ep_d)$, and using \eqref{gerr} we have on $E^c$ that
\begin{align}
g(P^\ep_{z^+_4}[\VV(\BB(t-s))=1]+C_1\ep^l)
&\leq g(P^\ep_{z^+_4}[\VV(\BB(t-s))=1]) + (1-c_0) \cdot C_1\ep^l
\nonumber\\
&\leq g(P^\ep_{z^+_2}[\VV(\BB(t-s))=1]) + (1-c_0) \cdot C_1\ep^l
\label{2.65}
\end{align}
since $z^+_4 \le z^+_2$.
Using \eqref{2.63} and \eqref{2.65} in \eqref{2.61} 
\begin{align*}
&E_x[g(P^{\ep}_{z^+_1}[\VV(\BB(t-s))=1]+C_1\ep^{l})]\\
&\leq E_{d(x,t)}\left[ g(P^\ep_{z^+_2}[\VV(\BB(t-s))=1])\right]
+ (1-c_0)C_1\ep^l+4d\ep^{l+1}+\ep^{2l} + C_2 \ep^l  1_{s\le \ep^3}\\
&\leq E_{d(x,t)}\left[ g(P^\ep_{z^+_2}[\VV(\BB(t-s))=1])\right]+(1-c_0/3)C_1\ep^l+C_2 \ep^l  1_{s\le \ep^3},
\end{align*}
which completes the proof of Lemma \ref{217} and hence of Proposition \ref{2.16}.

\end{proof}

%============================================================
\clearp

\section{Checking the conditions} \label{sec:check}
Since (G0) is based on an observation on all the particle systems considered, it is satisfied trivially. Recall that (G5) $g$ is strictly increasing on $[0,1]$ holds in all our examples and (G4) is a consequence of (G1), (G2) and (G3). That is, it suffices to check (G1)-(G3).

\subsection{Cubic $g$}
As discussed in Section 1, both the sexual reproduction model with rapid stirring and the Lotka-Volterra systems fall into this category. In this case, according to \eqref{gphi} we must have
$$
g(p) = p - c[(p-u_-)(p-u_0)(p-u_+)]
$$
for some $c>0$. To check (G1) we note that if we let $g_1(p)=(p-u_-)(p-u_0)(p-u_+)$ then 
$g_1(u_+-\delta)=-g_1(u_-+\delta)$. So $g(u_+-\delta)+g(u_-+\delta) =u_+ + u_- = 2u_0$ by (G0).
$$
g'(p) =  1 - c [(p-u_0)(p-u_+)+ (p-u_-)(p-u_+)+(p-u_-)(p-u_0)]
$$
From this we see that 
\begin{align*}
g'(u_+) & = 1 - c (u_+ - u_-)(u_+-u_0) < 1, \\
g'(u_-) & = 1 - c (u_- - u_0)(u_- -u_+) < 1, \\
g'(u_0) & = 1 - c (u_0 - u_-)(u_0-u_+) > 1, 
\end{align*}
which proves (G2). Taking the second derivative we obtain
$$
g''(p) =  - 2c [(p-u_+)+ (p-u_0) +(p-u_-)] =- 6c(p-u_0)
$$
since $u_+ + u_- = 2u_0$. This proves (G3).

\subsection{Nonlinear voter model}

Recall that for the nonlinear voter model we suppose

\mn
(A1) $b_1 >0$ and $3b_1+b_2 <0$;\\
(A2) $0\le a_1 \le a_2 \le 1/2$;\\
(A3) $6b_1+b_2>0$.

\mn
In Region 2 there are two extra roots of $\phi(p)$ denoted by $1-u^*<1/2<u^*$, where 
$$
u^*=1/2+\beta_0 \quad \text{ with }\beta_0=\frac{\sqrt{-(b_1-b_2)(3b_1+b_2)}}{2(b_1-b_2)}.
$$
The roots come from the following calculation:
\begin{align*}
\phi(p)&=b_1p(1-p)^4+b_2p^2(1-p)^3-b_2p^3(1-p)^2-b_1p^4(1-p)\\
&=b_1p(1-p)(1+3p^2-3p-2p^3)+b_2(1-p)^2p^2(1-2p)\\
&=p(1-p)(1-2p)\left( b_1(1-p+p^2)+b_2p(1-p)\right)\\
&=p(1-p)(1-2p)(b_1-b_2)\left(p^2-p+\frac{b_1}{b_1-b_2}\right).
\end{align*}
Solving $p^2-p+b_1/(b_1-b_2)=0$ gives the two extra roots $\frac{1}{2}\pm\beta_0$. 

%Now we proceed to calculate $\phi''(u^*)$. 
To check our conditions we note that $g(p) = p + \phi(p)$ where $\phi(p)$ is the reaction term, see \eqref{gphi}. In our notation $u_0=1/2$, $u_-=1-u^*$ and $u_+=u^*$.

\mn
\textbf{Checking (G1):} $\phi(p)$ is antisymmetric about $u_0$ so $\phi(u_+-\delta)=-\phi(u_-+\delta)$ and hence $g(u_+-\delta)+g(u_-+\delta) =u_+ + u_- = 2u_0$, proving (G1). 

\mn
\textbf{Checking (G2):} $u_-, u_+$ are stable fixed points so $\phi'(u_-)<0, \phi'(u_+)<0$. $u_0$ is unstable so $\phi'(u_0)>0$ and (G2) follows. 

\mn
\textbf{Checking (G3):} 
Since $g''(p)=\phi''(p)$ the next step is to calculate $\phi''(p)$ for $p\in(1/2,u^*)$. By symmetry it is easy to see 
\beq\label{phipro}
\phi(0)=\phi(1/2)=\phi(1)=0 \quad \text{ and }\quad \phi(p)=-\phi(1-p).
\eeq
It follows that $\phi''(p)=-\phi''(1-p)$ and $\phi''(1/2)=0$. Since $\phi(p)$ is quintic it has at most three inflection points. To check (G3) it suffices to show $\phi''(u^*)<0$.

Let $\phi_1(p)=p(1-p)(1-2p)$ and  $\phi_2(p)=(b_1-b_2)\left(p^2-p+\frac{b_1}{b_1-b_2}\right)$. Since $\phi(p)=\phi_1(p)\phi_2(p)$ we have 
$$\phi''(p)=\phi_1''(p)\phi_2(p)+\phi_1(p)\phi_2''(p)+2\phi_1'(p)\phi'_2(p).$$
Notice that $\phi_2(u^*)=0$ so our problem simplifies to
\begin{align*}
\phi''(u^*)
&=\phi_1(u^*)\phi_2''(u^*)+2\phi_1'(u^*)\phi'_2(u^*)
\end{align*}
The calculation simplifies if we write $u^*=1/2+\beta_0$, i.e.,
\begin{align*}
\phi''(1/2+\beta_0)
&=\phi_1(1/2+\beta_0)\phi_2''(1/2+\beta_0)+2\phi_1'(1/2+\beta_0)\phi'_2(1/2+\beta_0)\\
&=-2\beta_0\left(\frac{1}{4}-\beta_0^2\right)\cdot 2(b_1-b_2)+2\left(6\beta_0^2-\frac{1}{2}\right)\cdot 2\beta_0(b_1-b_2)\\
&=4\beta_0(b_1-b_2)\left(7\beta_0^2-\frac{3}{4}\right)=-4\beta_0(6b_1+b_2)<0,
\end{align*} 
hence proving (G3).

\clearp

\end{document}